\documentclass[11pt, twoside]{article}
%{cctart}
\usepackage{amsfonts,amsmath,amssymb,amsthm}
\usepackage[mathscr]{euscript}
\usepackage{indentfirst}
\usepackage{enumerate}
\usepackage{color}
\usepackage{lipsum}
\usepackage{paralist}
\usepackage{graphics}
\usepackage{graphicx}
\usepackage{subfigure}
\usepackage{tikz}
\usepackage{epstopdf}
\usepackage{float}\usepackage{mathrsfs,caption}
\usepackage{color}
\usepackage{accents}

\usepackage{paralist}
\setlength{\pltopsep}{9pt}

\setlength{\textwidth}{6.5truein} \setlength{\textheight}{9.5truein}
\setlength{\oddsidemargin}{-0.0in}
\setlength{\evensidemargin}{-0.0in}
\setlength{\topmargin}{-0.7truein}

\newtheorem{theo}{Theorem}[section]

\newtheorem{lem}{Lemma}[section]
\newtheorem{remark}{Remark}[section]
\newtheorem{col}{Corollary}[section]
\newtheorem{defi}{Definition}[section]
\numberwithin{equation}{section}

\newcommand{\lbl}[1]{\label{#1}}
\allowdisplaybreaks[4]

\newcommand\bes{\begin{eqnarray}} \newcommand\ees{\end{eqnarray}}
\newcommand{\bess}{\begin{eqnarray*}}
\newcommand{\eess}{\end{eqnarray*}}
\newcommand{\bbbb}{\left\{\begin{aligned}}
\newcommand{\nnnn}{\end{aligned}\right.}
\newcommand{\bea}{\begin{align*}}
\newcommand{\eea}{\end{align*}}

\newcommand\ep{\varepsilon}
\newcommand\kk{\left}
\newcommand\rr{\right}
\newcommand\dd{\displaystyle}

\newcommand\dx{{\rm d}x}
\newcommand\dy{{\rm d}y}
\newcommand\dt{{\rm d}t}
\newcommand\lm{\lambda}
\newcommand\nm{\nonumber}
\newcommand\yy{\infty}
\newcommand\qq{\eqref}

\newcommand\ol{\overline}
\newcommand\ud{\underline}
\newcommand\pp{\partial}

\newcommand\oo{\Omega}
\newcommand\boo{\overline\Omega}
\newcommand\bqq{\overline Q_T}

\newcommand\sss{\mathbb{S}}
\newcommand\vvv{\vskip 4pt}
\newcommand\www{\vspace{-2mm}}
\newcommand\zzz{\vspace{-1mm}}

\newcommand\ccc{\color{blue}}
\newcommand\rrr{\color{red}}

\markboth{}{}

\def\theequation{\arabic{section}.\arabic{equation}}

%\usepackage{lineno}
%\pagewiselinenumbers

%---------------------------------------------------------------------

\begin{document}\thispagestyle{empty}
\setlength{\abovedisplayskip}{7pt}
\setlength{\belowdisplayskip}{6pt}

\begin{center}{\Large\bf Generalized principal eigenvalue of time-periodic cooperative}\\[2mm]
{\Large\bf nonlocal dispersal systems and applications}\\[2mm]
Mingxin Wang\footnote{{\sl E-mail}: mxwang@sxu.edu.cn. Mingxin Wang was supported by National Natural Science Foundation of China Grant 12171120.}\\
{\small School of Mathematics and Statistics, Shanxi University, Taiyuan 030006, China}\\
Lei Zhang\footnote{The corresponding author, {\sl E-mail}: zhanglei890512@gmail.com. Lei Zhang was supported by the National Natural Science Foundation of China (12471168, 12171119) and the Fundamental Research Funds for the Central Universities (GK202304029, GK202306003, GK202402004). }\\
{\small School of Mathematics and Statistics, Shaanxi Normal University, Xi'an 710119, China}
\end{center}

\begin{quote}
\noindent{\bf Abstract.} It is well known that, in the study of the dynamical properties of nonlinear reaction-diffusion systems, the sign of the principal eigenvalue of the linearized system plays an important role. However, for the nonlocal dispersal systems, due to the lack of compactness, the essential spectrum appear, and the principal eigenvalue may not exist. In this paper, by constructing monotonic upper and lower control systems, we obtain the generalized principal eigenvalue of the cooperative irreducible system and demonstrate that this generalized principal eigenvalue plays the same role as the usual principal eigenvalue.

\noindent{\bf Keywords:} Generalized principal eigenvalue; Nonlocal dispersal operators; Time-periodic cooperative system; Upper and lower control systems.

\noindent \textbf{AMS Subject Classification (2020)}: 35R20; 47G20; 47A75; 45M15; 92D30
\end{quote}

\pagestyle{myheadings}
\section{Introduction}{\setlength\arraycolsep{2pt}
\markboth{\rm$~$ \hfill Cooperative nonlocal dispersal systems with time period\hfill $~$}{\rm$~$ \hfill M.X. Wang \& L. Zhang\hfill $~$}

In the study of differential equations, eigenvalues play a crucial role, particularly the principal eigenvalue, which can determine the dynamic behaviors of many monotone systems. In classical reaction-diffusion equations and cooperative systems, the theory of the principal eigenvalue can be established through the Krein-Rutman theorem. However, for nonlocal dispersal equations and systems, the lack of compactness can lead to the emergence of essential spectral points in the corresponding linear systems, and the principal eigenvalue does not always exist. In this paper, we define the generalized principal eigenvalue for cooperative time-periodic nonlocal dispersal systems and demonstrate that it serves the same purpose as the usual principal eigenvalue.

{\bf Notations}: For any given $u,v\in\mathbb{R}^m$. We say $u\ge v$ refers to $u_i\ge v_i$ for all $i\in\sss$; $u >v$ refers to $u_i \ge v_i$ for all $i\in\sss$ but $u\neq v$; and $u \gg v$ refers to $u_i>v_i$ for all $i\in\sss$. Let $\oo\subset\mathbb{R}^N$ be a bounded with smooth boundary, and $0<T<+\yy$ be a given constant and $m\ge 1$ be an integer. Denote
\bess
 &Q_T=\boo\times(0,T],\;\;\;\sss=\{1,\cdots,m\},\;\;\;\mathbb{R}_+^m
 =\big\{u\in\mathbb{R}^m:\,u_i\ge 0,\;\forall\,i\in\sss\big\},&\\
  &\mathbb{X}^m=\kk\{\phi:\,\phi_i\in C^{0,1}(\ol Q_T),\;\,\;\forall\,i\in\sss\rr\},\;\;\phi=(\phi_1,\cdots,\phi_m),&\\
   &\mathbb{X}^m_T=\kk\{\phi:\,\phi_i\in C^{0,1}(\ol Q_T),\;\,\phi_i(x,0)=\phi_i(x,T)\;\; {\rm in }\;\;\boo,\;\forall\,i\in\sss\rr\},&\\
    &\mathbb{P}^m=\mathrm{Int}((\mathbb{X}^m_T)_+)=\{\phi \in \mathbb{X}^m_T: \phi \gg 0 \text{ in } \bqq\},&\\
   &\mathbb{Y}^m_T=\{\phi:\,\phi_i\in C^{1}( [0,T]),\;\,
 \phi_i(0)=\phi_i(T),\,\, i\in\mathbb{S}\},&\\
 &\mathbb{Z}^m=\big\{u\in[L^\yy(Q_T)]^m:\,u_i(x,\cdot)\in C^1([0,T]),\,\,\forall\,x\in\boo,\,i\in\sss\big\}.&
 \eess

Consider the initial value problem of nonlocal dispersal cooperative system
\bes\left\{\begin{array}{lll}
 u_{it}=d_i\dd\int_\oo J_i(x,y)u_i(y,t)\dy-d^*_i(x)u_i+f_i(x,t,u),\; &x\in\ol\oo,\;&t>0,\\[3mm]
 	u_i(x,0)=u_{i0}(x)\ge0,\,\not\equiv 0, &x\in\boo,\\
i=1,\cdots,m,
 \end{array}\rr.\lbl{z1.3}\ees
where, for each $i\in\sss$, $d_i>0$ is a constant, $u_{i0}\in C(\boo)$, $J_i(x,y)$ satisfies
\begin{itemize}
	\item[{\bf(J)}] $J_i(x,y)\ge 0$ is a continuous function of $(x,y)\in\mathbb{R}^{2N}$, and
 \[J_i(x,x)>0,\;\;\int_{\mathbb{R}^N}J_i(x,y)\dy=1\;\;\;\mbox{for\; all}\;\;x\in\mathbb{R}^N;\]
\end{itemize}
and $d^*_i(x)$ is defined by the following manner:
\begin{itemize}
	\item[{\bf(D)}]
Either $d^*_i(x)=d_i$ (corresponding to Dirichlet boundary condition) or $d^*_i(x)=d_ij_i(x)$ (corresponding to Neumann boundary condition), where
 \[j_i(x)=\int_{\Omega} J_i(y,x){\rm d}y,\;\;\,x\in\ol\oo.\]
\end{itemize}

Set $b_{ik}(x,t)=\pp_{u_k}f_i(x,t,0)$ and
  \bess
   L(x,t)=(\ell_{ik}(x,t))_{m\times m},
   \eess
where $\ell_{ik}(x,t)=b_{ik}(x,t)$ for $i\not=k$, and $\ell_{ii}(x,t)=b_{ii}(x,t)-d_i^*(x)$. It is well known that the dynamical properties of \qq{z1.3} depends strongly on the eigenvalue of the operator ${\mathscr L}$ defined by
 \bes\lbl{1.2}\begin{cases}
  	{\mathscr L}[\phi]=({\mathscr L}_1[\phi],\cdots,{\mathscr L}_m[\phi]),\;\;\;\phi\in \mathbb{X}^m_T,\\[1mm]
 	{\mathscr L}_i[\phi]=d_i\dd\int_\oo J_i(x,y)\phi_i(y,t)\dy+\sum_{k=1}^m \ell_{ik}(x,t)\phi_k(x,t)-\phi_{it}(x,t).
 \end{cases}\ees

We always assume that $\ell_{ik}\in C(\ol Q_T)$ is $T$-periodic in time $t$ for all $i,k\in\sss$. We recall that the matrix $A$ is cooperative if $a_{ik}\ge 0$ for all $i\not=k$, and that $A$ is irreducible if the index set $\sss$ cannot be split up in two disjoint nonempty sets ${\mathbb I}$ and ${\mathbb K}$ such that $a_{ik}=0$ for $i\in{\mathbb I}, k\in{\mathbb K}$. And sometimes we need the following assumptions:\vspace{-1.2mm}
\begin{itemize}
	\item[{\bf (L1)}] $L(x,t)=(\ell_{ik}(x,t))_{m\times m}$ is a cooperative matrix for all $(x,t)\in\ol Q_T$;\vvv\vvv
	\item[{\bf (L2)}] $(\bar{\ell}_{ik})_{m \times m}$ is irreducible where $\bar{\ell}_{ik}=\frac1{|\Omega|T}\int_{0}^{T}\int_{\Omega} \ell_{ik} (x,t) \mathrm{d} x \mathrm{d} t$ for all $i,k \in \sss$.
\end{itemize}\vspace{-1.2mm}

We here remark that if $L(x_0,t_0)$ is irreducible at some $(x_0,t_0) \in \bqq$, then {\bf (L2)} holds obviously. Before stating our main result, we provide a brief overview of the study of the principal eigenvalue and the generalized principal eigenvalue. Shen and her collaborators established the principal eigenvalue theory for scalar autonomous and periodic equations and systems (cf. \cite{SZ10JDE, SX15, BS17PAMS}). Coville \cite{Cov10} applied the generalized Krein-Rutman theorem to present several sufficient conditions for the existence of the principal eigenvalue for scalar equations. Li, Coville, and Wang \cite{LCWdcds17} investigated the principal eigenvalue based on the Collatz-Wielandt characterization (maximum-minimum property). There are also many related and subsequent studies, including those involving time-delay \cite{LZZ2019JDE}, age structure \cite{KangR22}, partially degenerate cases \cite{Zhang24}, cross-diffusion \cite{SWZ23, SLLW2023JMPA} as well as more references \cite{BZ07,FLRX24}. Berestycki, Coville and Vo \cite{B-JFA16} defined the generalized principal eigenvalue for a scalar nonlocal dispersal operator using the Collatz-Wielandt characterization. The authors of this paper \cite{WZhang25} proved that the generalized principal eigenvalue of cooperative autonomous nonlocal dispersal systems can be approximated by the principal eigenvalues of monotonic upper and lower control systems. We also demonstrated that the generalized principal eigenvalue serves as a suitable alternative to the principal eigenvalue, regardless of whether the latter exists.

Due to seasonal changes in the environment, including temperature and rainfall, a natural question arises: do these results still hold for the time-periodic case? Specifically, how can the generalized principal eigenvalue be defined in time-periodic environments? Are there suitable perturbations with principal eigenvalues to approximate the generalized principal eigenvalue? Moreover, can the generalized principal eigenvalue play the same role as the principal eigenvalue does in the autonomous case? In this paper, we will provide confirmed answers.

Under the above assumptions {\bf(L1)} and {\bf(L2)}, for $x\in\boo$, we define an operator ${\mathscr P}_x$ by
  \bes
  [{\mathscr P}_{x}\phi](t)=-\frac{{\rm d}\phi(t)}{{\rm d}t}+L(x,t)\phi(t), \;\; \phi \in \mathbb{Y}^m_T.\lbl{z1.6}\ees
Then $\theta(x)=s({\mathscr P}_x)$ is an eigenvalue of ${\mathscr P}_x$, where $s({\mathscr P}_x)$ is the spectral bound of ${\mathscr P}_x$, which is the maximum of the real parts of all eigenvalues of ${\mathscr P}_x$ (see, e.g., \cite[Proposition 2.4]{LZZ2019JDE}). Under the assumption that $L(x,t)$ is irreducible for all $(x,t)\in\bqq$, one of the following conditions (see, e.g., \cite[Theorem 2.2]{BS17PAMS}) is sufficient  to guarantee the existence of $\lm_p({\mathscr L})$: \vspace{-2mm}
 \begin{enumerate}
\item[{\bf(H1)}] There exists an open subset $\Omega_0 \subset \Omega$ such that $[\max_{\ol\oo}\theta(x)-\theta(x)]^{-1}\not\in L^1(\Omega_0)$ ;\vskip 5pt
\item[{\bf(H2)}] $d_i$ are suitable large;\vskip 3pt
\item[{\bf(H3)}] $B(x,t)=B(t)$, i.e., $b_{ik}(x,t)=b_{ik}(t)$ for all $ i,k\in\sss$ ;\vskip 3pt
\item[{\bf(H4)}] $J_i(x,y)=\frac{1}{\delta^N}{J}^*_i(\frac{x-y}{\delta})$ with  ${\rm supp}({J}^*_i)=\{x\in\mathbb{R}^N: |x|<1\}$ and $0<\delta\ll 1$ .
\end{enumerate}\vspace{-2mm}

We also remark that conditions {\bf(H1)}--{\bf(H3)} can also guarantee the existence of the principal eigenvalue when the irreducibility condition is weakened to require that $L(x_0,t_0)$ is irreducible at some point $(x_0,t_0) \in \bqq$ shown in \cite{FLRX24}.

The following theorem is the first main result of this paper, which presents the definition, approximation of the generalized principal eigenvalue.\vspace{-2mm}

\begin{theo}\lbl{t2.1}\, Assume that {\bf(L1)}--{\bf(L2)} hold. Then there exist $\ud L^\ep(x,t)=(\ud\ell^\ep_{ik}(x,t))_{m\times m}$ and $\ol L^\ep(x,t)=(\bar\ell^\ep_{ik}(x,t))_{m\times m}$, with $\ep>0$, satisfying\vspace{-1mm}
 \begin{enumerate}
\item[$\bullet$] $\ud\ell^\ep_{ik}, \bar\ell^\ep_{ik}\in C(\ol Q_T)$, and $\ud\ell^\ep_{ik}$ and $\bar\ell^\ep_{ik}$ are decreasing and increasing in $\ep$, respectively, for all $i,k\in\sss$; and
		\bess
		\ud\ell^\ep_{ik}\le \ell_{ik}\le\bar\ell^\ep_{ik}\;\;\;\mbox{in}\;\;\ol Q_T,\;\;\;
		\lim_{\ep\to 0^+}\ud\ell^\ep_{ik}=\lim_{\ep\to 0^+}\bar\ell^\ep_{ik}=\ell_{ik} \;\;\;\mbox{in}\;\;C(\ol Q_T),\;\;i,k\in\sss,
		\eess
	\end{enumerate}\vspace{-1mm}
such that\vspace{-1mm}
 \begin{enumerate}[$(1)$]
 \item operators $\ud{\mathscr L}^\ep$ and $\ol{\mathscr L}^\ep$
have principal eigenvalues $\lm_p(\ud{\mathscr L}^\ep)$ and $\lm_p(\ol{\mathscr L}^\ep)$, respectively, where $\ud{\mathscr L}^\ep$ and $\ol{\mathscr L}^\ep$ are defined by \qq{1.2} with $\ell_{ik}$ replaced by $\ud\ell^\ep_{ik}$ and $\bar\ell^\ep_{ik}$, respectively.\vskip 4pt
		
\item $\lm_p(\ud{\mathscr L}^\ep)\le\lm_p(\ol{\mathscr L}^\ep)$, and $\lm_p(\ud{\mathscr L}^\ep)$ and $\lm_p(\ol{\mathscr L}^\ep)$ are strictly decreasing and increasing in $\ep$, respectively, and
\[\lim_{\ep\to 0^+}\lm_p(\ud{\mathscr L}^\ep)=\lim_{\ep\to 0^+}\lm_p(\ol{\mathscr L}^\ep)=:\lm({\mathscr L}),\]
 \item  $\lm({\mathscr L})$ belongs to the spectral set of ${\mathscr L}$, and has the characterization:
	\bes
\lm({\mathscr L})=\sup_{\phi\in{\mathbb{P}^m}}\sup_{\lambda\in\mathbb{R}} \{{\mathscr L}\phi\ge\lambda \phi\}= \inf_{\phi\in{\mathbb{P}^m}}\inf_{\lambda\in\mathbb{R}} \{{\mathscr L} \phi\le\lambda \phi\}=s({\mathscr L}).
		\lbl{2b.1}\ees
\item $\lm({\mathscr L})$ is continuous with respect to $L$.
	\end{enumerate}\vspace{-2mm}
\end{theo}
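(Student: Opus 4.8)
The plan is to realise the control families as \emph{localised} perturbations of $L$ near a near-maximum point of the auxiliary function $\theta(x)=s({\mathscr P}_x)$, arranged so that the perturbed operators meet the sufficient condition {\bf(H1)}; the common limit $\lm({\mathscr L})$ is then identified with the spectral bound by a Lipschitz estimate, and the characterisation \qq{2b.1} is obtained by a comparison (touching) argument. Throughout I write ${\mathscr L}_A$ (resp. ${\mathscr P}^A_x$) for the operator \qq{1.2} (resp. \qq{z1.6}) with $L$ replaced by a cooperative matrix $A$, $\theta_A(x)=s({\mathscr P}^A_x)$, and (for a matrix $A$ independent of $x$) $\theta_A$ for this constant value.

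\emph{Step 1 (the control systems).} Fix small $\ep>0$ and $x_0\in\oo$ with $\theta(x_0)>\max_{\boo}\theta-\ep$ (for simplicity assume $\theta$ attains its maximum in $\oo$; otherwise only minor changes are needed). Choose $\zeta_\ep\in C(\boo)$ with $0\le\zeta_\ep\le1$, $\zeta_\ep\equiv1$ on $B(x_0,\ep/2)$, $\zeta_\ep\equiv0$ off $B(x_0,\ep)$, and, for the \emph{upper} system, with $M_\ep(t):=(\max_{y\in\ol{B(x_0,\ep)}}\ell_{ik}(y,t))_{ik}+2\ep\mathbf{1}$ ($\mathbf{1}=$ all‑ones matrix), set
\[\bar\ell^\ep_{ik}(x,t)=\ell_{ik}(x,t)+\zeta_\ep(x)\big((M_\ep(t))_{ik}-\ell_{ik}(x,t)\big)^{+}.\]
Then $L\le\ol L^\ep\le M_\ep$ on $B(x_0,\ep)$, $\ol L^\ep=L$ outside it, $\ol L^\ep\equiv M_\ep$ on $B(x_0,\ep/2)$, and $M_\ep$ is irreducible; since $\theta_{M_\ep}\ge\theta(x_0)+2\ep>\max_{\boo}\theta$ (adding $c\mathbf{1}$ to a cooperative matrix raises its spectral bound by at least $c$), $\theta_{\ol L^\ep}$ attains its maximum $\theta_{M_\ep}$ precisely on $B(x_0,\ep/2)$, so {\bf(H1)} holds with $\Omega_0=B(x_0,\ep/2)$ and $\lm_p(\ol{\mathscr L}^\ep)$ exists. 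For the \emph{lower} system, put $\ud L^\ep\equiv m_\ep(t):=(\min_{y\in\ol{B(x_0,\ep)}}\ell_{ik}(y,t))_{ik}$ on a small core ball, $\ud L^\ep=L-\Gamma_\ep I$ for $x$ far from $x_0$ with $\Gamma_\ep\downarrow0$ large enough that $\theta-\Gamma_\ep<\theta_{m_\ep}$ there, and interpolate over a thin annulus in which the off‑diagonal entries ramp between $(m_\ep)_{ik}$ and $\ell_{ik}$ while every diagonal entry is depressed by an amount $\beta_\ep\downarrow0$ calibrated — via the modulus of continuity $\omega_L$ of $L$ and the Harnack ratio of the principal eigenfunction $\phi_{m_\ep}$ of $m_\ep$ — so that $(\ud L^\ep(x,t)-m_\ep(t))\phi_{m_\ep}(t)\le0$ throughout; then $\phi_{m_\ep}$ is a supersolution, $\theta_{\ud L^\ep}\le\theta_{m_\ep}$ on the annulus, $\theta_{\ud L^\ep}$ attains its maximum $\theta_{m_\ep}$ on the core ball, and again {\bf(H1)} holds, so $\lm_p(\ud{\mathscr L}^\ep)$ exists. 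All modifications are supported in $B(x_0,C\ep)$ with amplitude $O(\omega_L(C\ep)+\ep)$; the profiles can be chosen monotone in $\ep$; and since the perturbed sets have measure $O(\ep^N)$, cooperativity and {\bf(L2)} persist for small $\ep$. Thus $\ud L^\ep\le L\le\ol L^\ep$, with $\ud\ell^\ep_{ik},\bar\ell^\ep_{ik}$ monotone in $\ep$ and $\to\ell_{ik}$ in $C(\ol Q_T)$. \emph{Carrying out this construction — ensuring all the listed properties simultaneously, especially for the lower system, where the transition region must be handled so that $\theta_{\ud L^\ep}$ still attains its global maximum on the plateau — is the main obstacle; the remaining steps are soft.}

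\emph{Step 2 (monotonicity, convergence, identification with $s$).} A cooperative irreducible operator of the form \qq{1.2} that possesses a principal eigenvalue has it equal to the spectral bound $s=\frac1T\log r(U_\cdot(T,0))$, where $U_\cdot(T,0)$ is the positive monodromy operator of the associated $T$‑periodic evolution family on $[C(\boo)]^m$; moreover $A\mapsto s({\mathscr L}_A)$ is monotone and $m$‑Lipschitz for $\|\cdot\|_{C(\ol Q_T)}$. Indeed, if $\|L_1-L_2\|=\delta$ then $L_2-\delta\mathbf{1}\le L_1\le L_2+\delta\mathbf{1}$, so $U_{L_2-\delta\mathbf{1}}\le U_{L_1}\le U_{L_2+\delta\mathbf{1}}$ by the comparison principle, and adding $\delta\mathbf{1}$ to the coefficients adds the positive bounded operator $\delta{\mathscr B}$, $({\mathscr B}\phi)_i=\sum_k\phi_k$, $\|{\mathscr B}\|=m$, to the generator, whence a Duhamel/iteration bound gives $r(U_{L_2\pm\delta\mathbf{1}})\le e^{m\delta T}r(U_{L_2})$, i.e. $|s({\mathscr L}_{L_1})-s({\mathscr L}_{L_2})|\le m\delta$. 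Strict monotonicity of $\lm_p$ in $A$ (when it exists) follows from the strong maximum principle and {\bf(L2)}. Hence $\lm_p(\ud{\mathscr L}^\ep)=s(\ud{\mathscr L}^\ep)$ is strictly decreasing in $\ep$, $\lm_p(\ol{\mathscr L}^\ep)=s(\ol{\mathscr L}^\ep)$ strictly increasing, $\lm_p(\ud{\mathscr L}^\ep)\le\lm_p(\ol{\mathscr L}^{\ep'})$ for all $\ep,\ep'>0$ (both sandwich $L$), and $0\le\lm_p(\ol{\mathscr L}^\ep)-\lm_p(\ud{\mathscr L}^\ep)\le m\|\ol L^\ep-\ud L^\ep\|\to0$, so both converge to a common limit $\lm({\mathscr L})$, which by the Lipschitz bound and $\ud L^\ep,\ol L^\ep\to L$ equals $s({\mathscr L})$. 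Since the spectral radius $e^{Ts({\mathscr L})}$ of the positive operator $U_\cdot(T,0)$ lies in its spectrum, $\lm({\mathscr L})=s({\mathscr L})$ belongs to the spectral set of ${\mathscr L}$. The same Lipschitz bound gives part $(4)$: if $L^{(n)}\to L$ (cooperative), then $|\lm({\mathscr L}_{L^{(n)}})-\lm({\mathscr L}_L)|\le m\|L^{(n)}-L\|\to0$.

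\emph{Step 3 (Collatz--Wielandt).} Write $\lm^+=\sup_{\phi\in\mathbb{P}^m}\sup\{\lm\in\R:{\mathscr L}\phi\ge\lm\phi\}$ and $\lm^-=\inf_{\phi\in\mathbb{P}^m}\inf\{\lm\in\R:{\mathscr L}\phi\le\lm\phi\}$. If ${\mathscr L}\phi\ge\lm\phi$ and ${\mathscr L}\psi\le\mu\psi$ with $\phi,\psi\in\mathbb{P}^m$, set $t^*=\inf\{t>0:t\psi\ge\phi\text{ in }\bqq\}\in(0,\yy)$ and let $w=t^*\psi-\phi\ge0$ vanish in its $k_0$‑th component at some $(x_0,t_0)$; then $w_{k_0 t}(x_0,t_0)\le0$ (a minimum in $t$, using $T$‑periodicity when $t_0\in\{0,T\}$), $\int_\oo J_{k_0}(x_0,y)w_{k_0}(y,t_0)\dy\ge0$, and $\sum_k\ell_{k_0 k}(x_0,t_0)w_k(x_0,t_0)\ge0$ by cooperativity, so ${\mathscr L}_{k_0}[w](x_0,t_0)\ge0$; but ${\mathscr L}[w]=t^*{\mathscr L}\psi-{\mathscr L}\phi\le t^*\mu\psi-\lm\phi$, whose $k_0$‑th component at $(x_0,t_0)$ equals $t^*(\mu-\lm)\psi_{k_0}(x_0,t_0)$, forcing $\lm\le\mu$; taking the supremum over $(\phi,\lm)$ and the infimum over $(\psi,\mu)$ gives $\lm^+\le\lm^-$. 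Finally, the principal eigenfunction $\ud\phi^\ep\in\mathbb{P}^m$ of $\ud{\mathscr L}^\ep$ satisfies ${\mathscr L}\ud\phi^\ep\ge\ud{\mathscr L}^\ep\ud\phi^\ep=\lm_p(\ud{\mathscr L}^\ep)\ud\phi^\ep$ (since $\ud L^\ep\le L$ and $\ud\phi^\ep\gg0$), so $\lm_p(\ud{\mathscr L}^\ep)\le\lm^+$; symmetrically, using the principal eigenfunction of $\ol{\mathscr L}^\ep$ and $L\le\ol L^\ep$, $\lm^-\le\lm_p(\ol{\mathscr L}^\ep)$. Letting $\ep\to0^+$ yields $\lm({\mathscr L})\le\lm^+\le\lm^-\le\lm({\mathscr L})$, so $\lm({\mathscr L})=\lm^+=\lm^-=s({\mathscr L})$, which is \qq{2b.1}.
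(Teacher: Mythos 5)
Your Step 3 (the touching argument for the Collatz--Wielandt characterization) is sound and close in spirit to what the paper does, but Step 2 contains a genuine error that the rest of the argument leans on. The claimed Lipschitz bound $|s({\mathscr L}_{L_1})-s({\mathscr L}_{L_2})|\le m\|L_1-L_2\|_{C(\bqq)}$ is false, not merely unproved: already for constant coefficients take $A=\begin{pmatrix}0&N\\0&0\end{pmatrix}$ (cooperative, $s(A)=0$) and $B=\delta\mathbf{1}$; then $s(A+B)=\delta+\sqrt{\delta(N+\delta)}\approx\sqrt{N\delta}$, which exceeds $m\delta$ for large $N$, so the spectral bound is not Lipschitz in $L$ with a universal constant (only H\"older-type behaviour near defective spectrum), and your Duhamel sketch cannot give the stated constant (the operator domination $\Phi_{L+\delta\mathbf 1}(T,0)\le e^{m\delta T}\Phi_L(T,0)$ fails, and Dyson--Phillips only yields a bound with the extra quasi-boundedness constant of the unperturbed family). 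Since you use this estimate to (i) force $\lm_p(\ol{\mathscr L}^\ep)-\lm_p(\ud{\mathscr L}^\ep)\to0$, (ii) identify the common limit with $s({\mathscr L})$, and (iii) prove conclusion (4), all three are left without proof. The paper avoids the issue in two ways you should note: the control matrices are built so that $\ol L^\ep=\ud L^\ep+3\ep I$ \emph{exactly}, so the two principal eigenvalues differ by exactly $3\ep$ and no quantitative perturbation bound is needed; and the continuity of $s({\mathscr L})$ in $L$ (the delicate direction being decreasing perturbations, since the spectral radius is in general only upper semicontinuous) is proved separately in Lemma \ref{lem:conti} via Burlando's monotonicity, Kato's perturbation theory for the isolated eigenvalue when $r(\Phi(T,0))>r_e(\Phi(T,0))$, and the essential-radius formula of Lemma \ref{lem:A1} when $r=r_e$. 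Your one-line assertion that $\lm({\mathscr L})\in\sigma({\mathscr L})$ because $e^{Ts}$ lies in $\sigma(\Phi(T,0))$ also skips the transfer from the monodromy spectrum to $\sigma({\mathscr L})$; the paper handles this through the dichotomy $\lm({\mathscr L})=\theta_M$ (essential spectrum) or $\lm({\mathscr L})>\theta_M$ (eigenvalue).

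Independently, Step 1 --- which you yourself flag as the main obstacle --- is not actually carried out for the lower family, and the construction as described hits real obstructions. The matrix $m_\ep(t)=(\min_{y\in\ol{B(x_0,\ep)}}\ell_{ik}(y,t))_{ik}$ need not be irreducible under {\bf(L1)}--{\bf(L2)} (only the space--time average is assumed irreducible), so its periodic principal eigenfunction may vanish in some components; then the ``Harnack ratio'' used to calibrate $\beta_\ep$ is infinite and the supersolution inequality $(\ud L^\ep-m_\ep)\phi_{m_\ep}\le 0$ cannot be arranged with $\beta_\ep\to0$. The asserted monotonicity of the profiles in $\ep$ also fails as stated: at a point $x$ in the $\ep$-core but in the far field for $\ep'<\ep$ you would need $\min_{B(x_0,\ep)}\ell_{ii}\le\ell_{ii}(x,t)-\Gamma_{\ep'}$, which is false wherever the minimum is (nearly) attained; and strict monotonicity of $\lm_p$ in $\ep$ is asserted via ``strong maximum principle'' but not proved (Lemma \ref{le2.2}(1) only gives non-strict inequalities). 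Compare with the paper's construction, which modifies \emph{only the diagonal}: $\ud\ell^\ep_{ii}=\ell_{ii}-2\ep+\theta_M-\theta(x)$ on $\Sigma_\ep=\{\theta\ge\theta_M-\ep\}$ and $\ell_{ii}-\ep$ outside. This flattens $\ud\theta_\ep$ to the constant $\theta_M-2\ep$ on a set with nonempty interior (so Lemma \ref{lem:2.1} applies), leaves the off-diagonal entries --- hence {\bf(L2)} --- untouched, is monotone in $\ep$ by inspection, gives strict monotonicity of the eigenvalues for free because $\ud L^\ep\le\ud L^{\ep'}-(\ep-\ep')I$, and produces the exact relation $\ol L^\ep=\ud L^\ep+3\ep I$ mentioned above. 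So while your localized-plateau idea could in principle be made to work for the upper family, the lower family and the entire analytic bridge in Step 2 need to be redone, either along the paper's lines or with a correct continuity argument replacing the false Lipschitz estimate.
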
\www

Notice that the solution map is eventually strongly positive under the assumptions {\bf(L1)} and  {\bf(L2)}. We first prove that the spectral bound of ${\mathscr L}$ is an eigenvalue corresponding to a strictly positive eigenvector under the assumption {\bf(L1)} and condition {\bf(H1)}, and hence the eigenvector is strongly positive when the assumption {\bf(L2)} holds. This sufficient condition is less stringent than those presented in \cite{BS17PAMS,FLRX24}. The spectral bound of ${\mathscr L}$ is continuous with respect to the matrix-valued function $L(x,t)$ whether the principal eigenvalue exists or not.
We then construct the upper and lower control matrix-valued functions $\ol L^\ep(x,t)$ and $\ud L^\ep(x,t)$ for $L(x,t)$, and these two functions satisfy the assumptions {\bf(L1)}--{\bf(L2)} and condition {\bf(H1)}. Consequently, the corresponding operators $\ol{\mathscr L}^\ep$ and $\ud{\mathscr L}^\ep$ (defined by \qq{1.2} with $L(x,t)$ replaced by $\ol L^\ep(x,t)$ and $\ud L^\ep(x,t)$, respectively) have principal eigenvalues $\lm_p(\ol{\mathscr L}^\ep)$ and $\lm_p(\ud{\mathscr L}^\ep)$, respectively.
Moreover, $\lm_p(\ol{\mathscr L}^\ep)$ and $\lm_p(\ud{\mathscr L}^\ep)$ have the same limit, and this limit satisfies \eqref{2b.1}, which is a variation of Collatz-Wielandt characterization. In this paper, this limit is referred to as the generalized principal eigenvalue of the operator ${\mathscr L}$, which is indeed the spectral bound of ${\mathscr L}$ and also represents the exponential growth bound of the associated linear system.

Next we show this generalized principal eigenvalue play the same role as the principal eigenvalue  by analyzing the dynamics of \eqref{z1.3}.
The basic assumptions on $f=(f_1,\cdots, f_m)$: \vspace{-1.2mm}
\begin{itemize}
	\item[{\bf(F1)}] for each $i\in\sss$, the function $f_i(x,t,u)$ is $T$-periodic in time $t$, $f_i(x,t, 0)=0$ in $\bqq$, $f_i(x,t,u)\in C^{0,0,1}(\bqq\times\mathbb{R}_+^m)$; and $f(x,t,u)$ is cooperative in $u\ge 0$, i.e., $\partial_{u_k}f_i(x,t,u)\ge 0$ for all $(x,t)\in\bqq$, $u\ge 0$ and $k\not=i$;
	\item[{\bf(F2)}] there exists $(x_0,t_0)\in\ol Q_T$ such that
	\bess
	\kk(\partial_{u_k}f_i(x_0,t_0,u)\rr)_{m\times m}\;\;\mbox{is irreducible for all }\;u \ge 0.\eess
	\item[{\bf(F3)}]  $f(x,t,u)$ is strictly subhomogeneous with respect to $u\gg 0$, i.e.,
 \bess
 f(x,t,\delta u)>\rho f(x,t,u),\;\;\forall\,\rho\in (0,1),\; (x,t)\in\bqq,\; u\gg 0\eess
\end{itemize}\vspace{-1mm}

Now we state the second main result of this paper.

\begin{theo}\lbl{th3.6} Assume that {\bf(F1)}--{\bf(F3)} hold. Let $u(x,t;u_0)$ be the unique solution of \qq{z1.3}. Then the following statements are valid:\vspace{-2mm}
	\begin{enumerate}[$(1)$]%[leftmargin=6mm]
		\item If $\lm(\mathscr{L})>0$ and there exists $\ol U\in [C(\ol Q_T)]^m$ with $\ol U\gg 0$ such that
		\bes\begin{cases}
			\ol U_{it}\ge d_i\dd\int_\oo J_i(x,y)\ol U_i(y,t)\dy-d^*_i(x)\ol U_i
			+f_i(x,t,\ol U),\;&(x,t)\in\ol Q_T,\\[3mm]
			\ol U_i(x,0)\ge\ol U_i(x,T),\; &x\in\boo,\\
			i=1,\cdots,m,
		\end{cases}\lbl{3.5}\ees
		then \eqref{3.1} has a unique positive solution $U\in[C(\ol Q_T)]^m$ and $U\le\ol U$ in $\ol Q_T$. Moreover,
		\bes
		\lim\limits_{n\rightarrow +\infty} u(x,t+nT;u_0)= U(x,t) \;\;\text{ uniformly in } \; \ol Q_T.\lbl{3c.15}\ees
		\item If $\lm(\mathscr{L})< 0$, then the system \eqref{3.1} has no positive solution in $[C(\ol Q_T)]^m$. Moreover,  there exist $\sigma>0$ and $C>0$ such that
		\bes
		u(x,t;u_0)\le C{\rm e}^{-\sigma t},\;\;\forall\,x\in\boo,\; t\ge 0.
		\lbl{3.7}\ees
		This shows that $u(x,t;u_0)$ converges exponentially to zero.
		\item If $\lm(\mathscr{L})=0$, and $f$ is strongly subhomogeneous, that is, $f(x,t,\rho u)\gg\rho f(x,t,u)$ for all $(x,t)\in\ol Q_T$, $u\gg 0$ and $\rho\in(0,1)$, then system \eqref{3.1} has no positive solution in $[C(\ol Q_T)]^m$. If, in addition, there exists $\ol U\in [C(\ol Q_T)]^m$ with $\ol U\gg 0$ such that \eqref{3.5} holds, then
		\bes
		\lim_{n\rightarrow +\infty} u(x,t+nT;u_0)=0\;\;\text{ uniformly in } \; \ol Q_T.\lbl{3.8}\ees
	\end{enumerate}\vspace{-2mm}
\end{theo}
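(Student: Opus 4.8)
\emph{Setup and common tools.}
The argument is driven by the sign of $\lm(\mathscr L)$ together with the observation — supplied by Theorem \ref{t2.1} — that, even when $\mathscr L$ has no principal eigenpair, the control operators $\ud{\mathscr L}^\ep,\ol{\mathscr L}^\ep$ do, with $T$-periodic principal eigenfunctions $\ud\phi^\ep,\ol\phi^\ep\gg0$ and $\lm_p(\ud{\mathscr L}^\ep)\uparrow\lm(\mathscr L)$, $\lm_p(\ol{\mathscr L}^\ep)\downarrow\lm(\mathscr L)$ as $\ep\to0^+$. I will use the well-posedness of \qq{z1.3}, the comparison principle for cooperative nonlocal systems, and the fact that under {\bf(F1)}--{\bf(F2)} the time-$T$ solution map $S_T$ of \qq{z1.3} is order-preserving and eventually strongly positive. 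Two consequences of subhomogeneity are used repeatedly: since $\rho\mapsto f(x,t,\rho u)/\rho$ is (strongly, resp.\ strictly) nonincreasing on $(0,1]$ under the hypotheses of part (3), resp.\ {\bf(F3)}, letting $\rho\to0^+$ gives $B(x,t)u\gg f(x,t,u)$, resp.\ $B(x,t)u\ge f(x,t,u)$, for $u\gg0$, where $B=(\pp_{u_k}f_i(\cdot,\cdot,0))$; combining the latter with $\bar\ell^\ep_{ik}\ge\ell_{ik}$ yields $f_i(x,t,u)\le\sum_k\bar\ell^\ep_{ik}u_k+d^*_i(x)u_i$, while $C^1$-differentiability at $0$ gives $f_i(x,t,\delta\phi)=\delta\sum_k\ell_{ik}\phi_k+\delta d^*_i\phi_i+o(\delta)$ as $\delta\to0^+$.

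\emph{Part (1).}
Pick $\ep>0$ with $\lm_p(\ud{\mathscr L}^\ep)>0$. Using $\ell_{ik}\ge\ud\ell^\ep_{ik}$, $\ud\phi^\ep\gg0$, and the expansion above one verifies that $\delta\ud\phi^\ep$ is a strict subsolution of \qq{3.1} for all small $\delta>0$, while $K\ol U$ is a supersolution of \qq{3.1} for every $K\ge1$ (subhomogeneity applied to $\ol U$); shrinking $\delta$ and enlarging $K$ gives $\delta\ud\phi^\ep\le K\ol U$. Then $S_T^n(\delta\ud\phi^\ep(\cdot,0))$ increases, $S_T^n(K\ol U(\cdot,0))$ decreases, and both converge in $[C(\boo)]^m$ to positive $T$-periodic solutions $U_-\le U_+$ of \qq{3.1}, both $\gg0$. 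Strict subhomogeneity {\bf(F3)} plus the strong maximum principle give $U_-=U_+=:U$ via the classical sliding argument on $\rho^*=\max\{\rho\in(0,1]:\rho U_+\le U_-\}$; uniqueness of $U$ and $U\le\ol U$ follow. Given $u_0$, pick $K'$ with $u_0\le K'\ol U(\cdot,0)$ (so $u(\cdot,\cdot;u_0)\le K'\ol U$) and $n_0$ with $u(\cdot,n_0T;u_0)\gg0$, then $\delta$ small with $\delta\ud\phi^\ep(\cdot,0)\le u(\cdot,n_0T;u_0)$; squeezing $u(\cdot,(n_0+n)T;u_0)$ between the two monotone sequences yields $u(\cdot,nT;u_0)\to U(\cdot,0)$, and \qq{3c.15} follows by continuous dependence, the convergence being uniform by Dini's theorem.

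\emph{Parts (2) and (3).}
For (2), pick $\ep>0$ with $\ol\lm:=\lm_p(\ol{\mathscr L}^\ep)<0$ and set $\sigma=-\ol\lm>0$; using $f_i(x,t,u)\le\sum_k\bar\ell^\ep_{ik}u_k+d^*_iu_i$ and $\ol{\mathscr L}^\ep_i[\ol\phi^\ep]=\ol\lm\,\ol\phi^\ep_i$, a direct computation shows $K{\rm e}^{-\sigma t}\ol\phi^\ep(x,t)$ is a supersolution of the Cauchy problem \qq{z1.3}; choosing $K$ with $K\ol\phi^\ep(\cdot,0)\ge u_0$ and comparing gives \qq{3.7}. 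A positive periodic solution $U$ would be $\gg0$ and its $T$-periodic extension would coincide with $u(\cdot,\cdot;U(\cdot,0))$ by uniqueness, contradicting \qq{3.7}; hence none exists. For (3), if \qq{3.1} had a positive solution $U$, then $U\in\mathbb P^m$ and substituting the equation gives $\mathscr L_i[U]=\sum_k b_{ik}U_k-f_i(x,t,U)=(B(x,t)U)_i-f_i(x,t,U)$; strong subhomogeneity forces $B(x,t)U\gg f(x,t,U)$ on $\bqq$, so by compactness $\mathscr L[U]\ge\eta\mathbf 1\ge\lm U$ for some $\eta,\lm>0$, and \qq{2b.1} gives $\lm(\mathscr L)\ge\lm>0$, contradicting $\lm(\mathscr L)=0$. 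When a supersolution $\ol U\gg0$ exists, $K\ol U$ ($K\ge1$) is a supersolution, $S_T^n(K\ol U(\cdot,0))$ decreases to a nonnegative fixed point of $S_T$, which must be $0$ by the previous step; since $u_0\le K\ol U(\cdot,0)$ for large $K$, \qq{3.8} follows by squeezing $0\le u(\cdot,t+nT;u_0)$ below the Cauchy solutions emanating from $S_T^n(K\ol U(\cdot,0))\to0$, uniformly by Dini.

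\emph{Main obstacle.}
The crux is that $\mathscr L$ itself may lack a principal eigenpair, so the near-zero sub/supersolutions cannot be built directly from an eigenfunction of $\mathscr L$; the remedy is to borrow the eigenfunctions of the control operators of Theorem \ref{t2.1}, whose eigenvalues remain on the correct side of $0$ for small $\ep$ precisely when $\lm(\mathscr L)\neq0$. The threshold case $\lm(\mathscr L)=0$ in part (3) is the genuinely hard one, since neither control operator has eigenvalue $0$; this is why the stronger subhomogeneity is required, enabling the Collatz--Wielandt inequality \qq{2b.1} to be applied to a hypothetical positive periodic solution rather than to an eigenfunction. The remaining technical load — eventual strong positivity, compactness, and convergence of the monotone iteration for the nonlocal $T$-periodic semiflow — is supplied by the preliminary results.
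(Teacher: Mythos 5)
Your parts (1) and (2) are essentially sound. Part (1) follows the paper's own route (subsolution $\delta\ud\phi^\ep$ from the lower control operator, supersolution $K\ol U$, monotone iteration, identification of the two limits by subhomogeneity plus the strong maximum principle, then Dini); note only that the claim that the monotone sequences ``converge in $[C(\boo)]^m$'' is not available a priori -- the limits are merely semicontinuous, and continuity is recovered only after you identify them via the uniqueness theorem valid in $\mathbb{Z}^m$ (Theorem \ref{th3.4}), which is exactly how the paper argues. In part (2) your global inequality $f(x,t,u)\le B(x,t)u$ for $u\ge 0$, obtained by letting $\rho\to0^+$ in {\bf(F3)}, yields the supersolution $K{\rm e}^{-\sigma t}\ol\phi^\ep$ with $\sigma=-\lm_p(\ol{\mathscr L}^\ep)$ directly, and deducing nonexistence of continuous positive periodic solutions from the decay estimate is a legitimate (and slightly cleaner) variant of the paper's argument, which instead works locally near $u=0$ and scales by $\gamma>1$.

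The genuine gap is the last step of part (3). You let $S_T^n(K\ol U(\cdot,0))$ decrease and assert the limit is ``a nonnegative fixed point of $S_T$, which must be $0$ by the previous step.'' Because the solution map of the nonlocal system is not compact, this decreasing limit is only upper semicontinuous in $x$: it is a bounded nonnegative $T$-periodic solution in $\mathbb{Z}^m$, not necessarily in $[C(\ol Q_T)]^m$. Your nonexistence argument, however, only excludes \emph{continuous} positive periodic solutions, since it tests the Collatz--Wielandt formula \qq{2b.1} with $U\in\mathbb{P}^m\subset\mathbb{X}^m_T$ (and Lemma \ref{le2.2}(1) likewise requires $\phi\in\mathbb{X}^m$). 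By Theorem \ref{th3.3} the limit is either $0$ or uniformly positive, but in the latter case you have no tool in the paper to convert the pointwise inequality ${\mathscr L}[\widehat U]\ge\lm\widehat U$ for a possibly $x$-discontinuous $\widehat U$ into $\lm({\mathscr L})>0$. This is precisely the obstruction the paper's proof of \qq{3.8} is built to avoid: it perturbs the nonlinearity to $f^\ep_i=(\bar\ell^\ep_{ii}-\ell_{ii})u_i+f_i$, whose linearization at $0$ is $\ol L^\ep$ with $\lm_p(\ol{\mathscr L}^\ep)>0$, applies part (1) to get \emph{continuous} periodic solutions $U^\ep\le\gamma\ol U$ dominating $u$, lets $\ep\to0^+$, and, if the limit were nontrivial, tests with the continuous $U^\ep$ to obtain a uniform bound $\lm_p(\ol{\mathscr L}^\ep)\ge\sigma>0$, contradicting $\lm_p(\ol{\mathscr L}^\ep)\to\lm({\mathscr L})=0$. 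Without such an $\ep$-perturbation argument (or an extension of \qq{2b.1} and Lemma \ref{le2.2} to merely bounded, $x$-discontinuous test functions), your deduction of \qq{3.8} does not go through.
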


The main challenge to study \eqref{z1.3} still arise from the lack of compactness for the solution mapping, which results in the limit of the iteration sequence from upper and lower solutions exhibiting only semi-continuity. To overcome this difficulty, we first investigate the maximum principle and uniqueness of time-periodic positive solutions, which are differentiable with respect to the time variable, in the $L^{\infty}$ sense. Unlike the autonomous case, proving the maximum principle here necessitates simultaneous consideration of both time and space variables. To effectively deal with time variable, we require that the positive periodic solution is differentiable with respect to time variable.

In the case of $\lambda(\mathscr{L}) > 0$, we utilize the lower control matrix-valued function to construct a lower solution, and then construct the continuous positive time-periodic solution by the upper and lower solutions method.
We remark that the limits of the iteration sequence from upper and lower solutions are mild solutions in the $L^{\infty}$ sense, and thus differentiable with respect to the time variable. Moreover, these two limits are equal and upper and lower continuous, respectively, implying that the limit is continuous, which is the unique positive time-periodic solution.  Therefore, the solution from upper and lower solutions converges uniformly to the unique positive continuous time-periodic solution due to the Dini Theorem.  In the case of $\lambda(\mathscr{L}) \le 0$, the non-existence of time-periodic continuous positive solutions can be established using the Collatz-Wielandt characterization. When $\lambda(\mathscr{L}) < 0$, exponential decay can be demonstrated by constructing a suitable upper solution using the upper control matrix-valued function. For the critical case (i.e. $\lambda(\mathscr{L}) = 0$), in order to analyze the global dynamics, we perturb the system upwards and use the conclusion of the case $\lambda(\mathscr{L}) > 0$.

The organization of this paper is as follows. In Section 2, we shall prove the existence of the principal eigenvalue and the continuity of the spectral bound of the nonlocal dispersal cooperative systems. In Section 3, we present the definition and approximation of the generalized principal eigenvalue. In Section 4, we study the threshold dynamics of \qq{z1.3}. The existence, uniqueness and stabilities of positive equilibrium solutions are obtained. The results show that this generalized principal eigenvalue plays the same role as the usual principal eigenvalue. In Section 5, we use the abstract results obtained in Sections 3 and 4 to investigate a West Nile virus model. The section 6 is a brief discussion.

\section{Preliminaries------Existence of the principal eigenvalue and continuity of the spectral bound}

Let $({\mathscr X},{\mathscr X}_+)$ be an ordered Banach space with $\rm{Int}\,({\mathscr X}_+) \neq \emptyset$. We use $\ge$\, ($>$ and $\gg$) to represent the (strict and strong) order relation induced by the cone ${\mathscr X}_+$. For convenience, we say an element is strictly positive if it is in the cone ${\mathscr X}_+$ but not zero, and is strongly positive if it is in $\rm{Int}({\mathscr X}_+)$. For a bounded linear operator ${\mathscr A}$ on ${\mathscr X}$, ${\mathscr A}$ is said to be positive if ${\mathscr A} {\mathscr X}_+\subset {\mathscr X}_+$, strictly positive if ${\mathscr A} {\mathscr X}_+\setminus \{0\}\subset{\mathscr X}_+\setminus\{0\}$ and strongly positive if ${\mathscr A}{\mathscr X}_+\setminus\{0\}\subset\rm{Int}({\mathscr X}_+)$. The spectral bound (or principal spectrum point) of a resolvent positive operator ${\mathscr A}$ is defined by
 \[s({\mathscr A}):=\sup\{\mathrm{Re}\lambda: \lambda\in\sigma({\mathscr A})\}.\]
	
The spectral bound $s({\mathscr A})$ is called the weak principal  (resp. principal) eigenvalue of ${\mathscr A}$ if $s({\mathscr A})$ is an eigenvalue of ${\mathscr A}$ corresponding to a strictly (resp. strongly) positive eigenfunction.

Let $v(t,s,x;v_0)$ be the solution of
 \bes\label{equ:JG}\begin{cases}
	v_{it}= d_i\dd\int_\oo J_i(x,y)v(y,t) \mathrm{d} y+
	\sum_{k=1}^m \ell_{ik}(x,t)v_k, & x\in\boo,\; t>0,\\
	v_i(x,s)=v_{i0}(x)\ge0,\,\not\equiv 0, &x\in\boo,\\
	 i=1,\cdots, m,
\end{cases}
\ees
with initial data $v(s,s,\cdot;v_0)=v_0$. Define $\Phi(t,s)v_0=v(t,s,\cdot;v_0)$, which is a $T$-periodic evolution family on $C(\boo,\mathbb{R}^m)$.
Let $\Psi(t,s)$ be the $T$-periodic  evolution family on $C(\boo,\mathbb{R}^m)$ of
  \bes\label{equ:G}
\begin{cases}
	v_{it}=\dd\sum_{k=1}^m \ell_{ik}(x,t)v_k, &  x\in\boo, \; t>0,\\
	v_i(x,s)=v_{i0}(x)\ge0,\,\not\equiv 0, &x\in\boo,\\
	 i=1,\cdots, m.
\end{cases}
\ees
For any $x\in\boo$, let $\Gamma_x(t,s)$ be the $T$-periodic evolution family of \eqref{equ:G} on $\mathbb{R}^m$. Then we have the following conclusions.\zzz

\begin{lem}\label{lem:A1} Assume {\bf(L1)} holds. Then the following statements are valid:\vspace{-2mm}
\begin{enumerate}[$(1)$]
 \item $\sigma_e(\Phi(T,0))=\sigma_e(\Psi(T,0))=\cup_{x\in\boo}
     \sigma(\Gamma_x(T,0))$.\vvv
\item $r_e(\Phi(T,0))=r_e(\Psi(T,0))=\max_{x\in\boo}r(\Gamma_x(T,0))$.\vvv \item The operator ${\mathscr L}$ is resolvent positive and $s({\mathscr L})=\frac{\ln r(\Phi(T,0))}{T}$, i.e., $r({\rm e}^{-s({\mathscr L})T}\Phi(T,0))=1$.
\end{enumerate}
\end{lem}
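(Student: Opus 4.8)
\textbf{Proof plan for Lemma \ref{lem:A1}.}

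The plan is to reduce everything to the comparison between the nonlocal operator $\mathscr{L}$ and the purely local (ODE-in-time) operator obtained by dropping the dispersal integral, and then to use the standard fact that a bounded perturbation (here the integral operator $\phi_i\mapsto d_i\int_\oo J_i(x,y)\phi_i(y,t)\dy$, which is a bounded operator on $C(\boo,\mathbb{R}^m)$) does not change the essential spectrum. For part $(1)$, I would first write $\Phi(T,0)=\Psi(T,0)+\mathscr{K}$ where $\mathscr{K}$ is compact on $C(\boo,\mathbb{R}^m)$ — compactness of $\mathscr{K}$ comes from the fact that the kernels $J_i$ are continuous, so the integral operator $\phi\mapsto\int_\oo J_i(x,\cdot)\phi_i\,\mathrm{d}y$ is compact, and the Duhamel/variation-of-constants representation of the difference $\Phi(T,0)-\Psi(T,0)$ integrates this compact operator against the two (bounded) evolution families, preserving compactness. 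Then $\sigma_e(\Phi(T,0))=\sigma_e(\Psi(T,0))$ by the invariance of the essential spectrum under compact perturbation. For the identification with $\cup_{x\in\boo}\sigma(\Gamma_x(T,0))$, note that $\Psi(T,0)$ acts on $C(\boo,\mathbb{R}^m)$ as a \emph{multiplication operator in $x$}: $(\Psi(T,0)\phi)(x)=\Gamma_x(T,0)\phi(x)$, because \eqref{equ:G} has no coupling in the space variable. For such a multiplication operator by a continuous matrix-valued map $x\mapsto\Gamma_x(T,0)$ on a compact space $\boo$, the whole spectrum is $\overline{\cup_{x\in\boo}\sigma(\Gamma_x(T,0))}$, which is already closed since $\boo$ is compact and $x\mapsto\Gamma_x(T,0)$ is continuous, so the closure is superfluous; moreover this spectrum is purely essential (no isolated eigenvalues of finite multiplicity can occur for a multiplication operator on a perfect space), giving $\sigma_e(\Psi(T,0))=\cup_{x\in\boo}\sigma(\Gamma_x(T,0))$.

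Part $(2)$ is then mostly bookkeeping on spectral radii. The essential spectral radius $r_e$ is the radius of the essential spectrum, so $r_e(\Phi(T,0))=r_e(\Psi(T,0))=\sup\{|\mu|:\mu\in\cup_x\sigma(\Gamma_x(T,0))\}$. Because each $\Gamma_x(T,0)$ is a nonnegative matrix (the system \eqref{equ:G} is cooperative by {\bf(L1)}, so its time-$T$ flow has nonnegative entries), Perron–Frobenius gives that $r(\Gamma_x(T,0))\in\sigma(\Gamma_x(T,0))$ and dominates the modulus of every eigenvalue of $\Gamma_x(T,0)$; taking the sup over the compact set $\boo$ and using continuity of $x\mapsto r(\Gamma_x(T,0))$ yields $r_e(\Phi(T,0))=\max_{x\in\boo}r(\Gamma_x(T,0))$.

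For part $(3)$: resolvent positivity of $\mathscr{L}$ follows from the cooperativity {\bf(L1)} together with the positivity of the nonlocal dispersal semigroup generated by $\phi\mapsto d_i\int_\oo J_i(x,y)\phi_i\,\mathrm{d}y-d_i^*(x)\phi_i$ — this is the standard fact that the sum of a generator of a positive semigroup and a cooperative bounded zero-order term generates a positive semigroup, hence is resolvent positive; equivalently $\Phi(t,s)$ is a positive evolution family. Then I would invoke the general relation between the spectral bound of the generator of a $T$-periodic evolution family and the spectral radius of its period map: $s(\mathscr{L})=\frac{1}{T}\ln r(\Phi(T,0))$. The cleanest route is to note that $\mathscr{L}$ (with its time-periodicity built into the space $\mathbb{X}^m_T$) has $\lambda\in\sigma(\mathscr{L})$ iff $\mathrm{e}^{\lambda T}\in\sigma(\Phi(T,0))$, a standard spectral mapping statement for evolution families on a bounded time interval (no continuous-spectrum pathologies arise here because we are on a \emph{finite} period $[0,T]$), so taking real parts and suprema gives the formula; the reformulation $r(\mathrm{e}^{-s(\mathscr{L})T}\Phi(T,0))=1$ is immediate from $r(\mathrm{e}^{-\alpha T}\Phi(T,0))=\mathrm{e}^{-\alpha T}r(\Phi(T,0))$.

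The main obstacle I anticipate is \emph{not} any single hard inequality but rather assembling the right functional-analytic facts with correct hypotheses: specifically (i) proving that $\Phi(T,0)-\Psi(T,0)$ is genuinely compact (one must be careful that the Duhamel integral $\int_0^T\Psi(T,s)\big(\text{nonlocal term}\big)\Phi(s,0)\,\mathrm{d}s$ is a norm-limit/integral of compact operators and hence compact, using that $\Psi$ and $\Phi$ are strongly continuous and uniformly bounded on $[0,T]$), and (ii) justifying the spectral mapping $\sigma(\Phi(T,0))\setminus\{0\}=\mathrm{e}^{T\sigma(\mathscr{L})}$ and the precise meaning of $\sigma(\mathscr{L})$ as the spectrum of the operator on $\mathbb{X}^m_T$ with periodic time. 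Both are standard (the compact-perturbation invariance of $\sigma_e$ is classical; the evolution-family spectral correspondence on a finite interval is in, e.g., the references on $T$-periodic evolution families cited in the paper), so I expect the proof to consist mainly of citing these facts and verifying the cooperativity/positivity hypotheses they require. I would present it in the three-part structure above, with the compactness argument and the multiplication-operator spectrum computation done explicitly and the two "standard theorem" invocations flagged with citations.
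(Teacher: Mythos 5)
Your proposal follows essentially the same route as the paper: the variation-of-constants decomposition $\Phi(T,0)=\Psi(T,0)+\int_0^T\Psi(T,s)\,{\cal J}\,\Phi(s,0)\,{\rm d}s$ with the nonlocal term compact, invariance of the essential spectrum under compact perturbation, identification of $\Psi(T,0)$ as the multiplication operator $x\mapsto\Gamma_x(T,0)$ whose spectrum is $\cup_{x\in\boo}\sigma(\Gamma_x(T,0))$, and the standard positivity/spectral-mapping facts for part (3). The only difference is presentational: the paper delegates the multiplication-operator spectrum computation and part (3) to citations (Schechter, Liang--Zhang--Zhao, Feng--Li--Ruan--Xin), whereas you sketch those ingredients directly, which is fine.
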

\begin{proof} For any $t \in [0,T]$, define an operator
	\bess
 {\cal J}=({\cal J}_1,\cdots,{\cal J}_m)\eess
 with
  \[{\cal J}_i[v](x)=d_i\dd\int_\oo J_i(x,y)v_i(y)\dy,\;\;v\in C(\ol \Omega,\mathbb{R}^m). \]
By the constant-variation formula, we have
  \[\Phi(T,0)v_0=\Psi(T,0)v_0+ \int_{0}^{T} \Psi(T,s) {\cal J} \Phi(s,0)v_0 \mathrm{d} s.\]
Since ${\cal J}_i$ is compact, it is not hard to verify that $\int_{0}^{T} \Psi(T,s) {\cal J} \Phi(s,0) \mathrm{d} s$ is compact. Thanks to \cite[Theorem 7.27]{S1971book} and \cite[Proposition 2.7]{LZZ2019JDE}, $\sigma_e(\Phi(T,0))=\sigma(\Psi(T,0))=\cup_{x\in\boo} \sigma(\Gamma_x(T,0))$. Therefore, the conclusions (1) and (2) hold.
	
The conclusion (3) can be derived by \cite[Proposition 2.10]{FLRX24}.
\end{proof}\www

We remark that the spectral bound of ${\mathscr L}$ is indeed the exponential growth bound of $\Phi$ by Lemma \ref{lem:A1}(3).

The following lemma can be used to estimate spectral bound  $s({\mathscr L})$ of ${\mathscr L}$.\www

\begin{lem}\lbl{le2.2} Assume {\bf (L1)} holds. Then the following statements are valid:\vspace{-2mm}
\begin{enumerate}[$(1)$]
\item If there exist a constant $\beta$ and $\phi\in\mathbb{X}^m$ with $\phi\ge 0$ in $\ol Q_T$ and $\phi(\cdot,0)\not\equiv 0$, such that
 \bess
{\mathscr L}[\phi]\ge \beta\phi,\;\;\;\mbox{and}\;\;\phi(x,T) \, \ge\,\phi(x,0)\, \;\;\mbox{in}\;\;\boo,\eess
then $s({\mathscr L})\ge \beta$.\vvv
 \item If there exist a constant $\beta$ and $\phi\in\mathbb{X}^m$ with $\phi \gg  0$ in $\ol Q_T$, such that
\bess
{\mathscr L}[\phi]\le\beta\phi,\;\;\;\mbox{and}\;\;\phi(x,T)\le\phi(x,0)\, \;\;\mbox{in}\;\;\boo,\eess
then $s({\mathscr L})\le\beta$.
\end{enumerate}
\end{lem}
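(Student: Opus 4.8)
The plan is to prove the two estimates via the characterization $s({\mathscr L}) = \frac{\ln r(\Phi(T,0))}{T}$ from Lemma \ref{lem:A1}(3), combined with a suitable comparison (maximum) principle for the evolution family $\Phi$. First I would establish that the solution map $\Phi(t,s)$ associated with \eqref{equ:JG} is positive: under {\bf(L1)} the off-diagonal entries $\ell_{ik}$ are nonnegative, and after the standard shift $v\mapsto {\rm e}^{\kappa t}v$ with $\kappa$ large the matrix $L(x,t)+\kappa I$ plus the nonlocal term has nonnegative ingredients, so $\Phi(t,s)$ maps the cone $C(\ol\Omega,\mathbb{R}_+^m)$ into itself; this is the monotone comparison principle that drives everything.

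For part (1): set $\psi(x,t)={\rm e}^{-\beta t}\phi(x,t)$, so the hypothesis ${\mathscr L}[\phi]\ge\beta\phi$ becomes $\psi_{it}\le d_i\int_\Omega J_i(x,y)\psi_i(y,t)\dy+\sum_k\ell_{ik}\psi_k$, i.e. $\psi$ is a (time-differentiable) lower solution of \eqref{equ:JG}. Comparison with the solution $v(t,0,\cdot;\psi(\cdot,0))$ of \eqref{equ:JG} started from $\psi(\cdot,0)$ gives $\psi(\cdot,T)\le \Phi(T,0)\psi(\cdot,0)$, hence combining with $\phi(\cdot,T)\ge\phi(\cdot,0)$, i.e. $\psi(\cdot,T)\ge {\rm e}^{-\beta T}\psi(\cdot,0)$, we get ${\rm e}^{-\beta T}\psi(\cdot,0)\le \Phi(T,0)\psi(\cdot,0)$ with $\psi(\cdot,0)=\phi(\cdot,0)\ge 0$, $\not\equiv 0$. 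Iterating, $\Phi(T,0)^n\phi(\cdot,0)\ge {\rm e}^{-n\beta T}\phi(\cdot,0)$, which forces $r(\Phi(T,0))\ge {\rm e}^{-\beta T}$ after taking $n$-th roots and using that a strictly positive vector cannot be dominated by something of smaller exponential size — more precisely one pairs against a strictly positive functional in the dual cone (or uses the Krein-Rutman/Bonsall-type lower bound $r({\mathscr A})\ge \sup\{\mu: {\mathscr A}w\ge\mu w \text{ for some } w>0\}$). Therefore $s({\mathscr L})=\frac{\ln r(\Phi(T,0))}{T}\ge-\beta$... wait — one must be careful with signs: in fact the clean route is that ${\rm e}^{\beta T}\le r(\Phi(T,0))$ is wrong in orientation; instead $\psi$ being a \emph{subsolution} yields $\Phi(T,0)\phi(\cdot,0)\ge\psi(\cdot,T)\ge{\rm e}^{-\beta T}\phi(\cdot,0)$, hence $r(\Phi(T,0))\ge{\rm e}^{-\beta T}$? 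That gives $s({\mathscr L})\ge-\beta$, not the desired $s({\mathscr L})\ge\beta$. The resolution is to run the estimate the other way: here $\phi\ge0$ with ${\mathscr L}\phi\ge\beta\phi$ makes ${\rm e}^{-\beta t}\phi$ a subsolution, and a subsolution bounded below by its own final/initial comparison gives the lower bound on the growth bound $\omega(\Phi)=s({\mathscr L})$ directly through $\|\Phi(nT,0)\phi(\cdot,0)\|\ge c\,{\rm e}^{\beta nT}$; I would phrase it cleanly using Lemma \ref{lem:A1}(3) together with a pairing against a positive eigenfunctional of the adjoint. Part (2) is dual: $\phi\gg 0$ with ${\mathscr L}\phi\le\beta\phi$ and $\phi(\cdot,T)\le\phi(\cdot,0)$ makes ${\rm e}^{-\beta t}\phi$ a strict \emph{supersolution} bounded above, so by comparison $\Phi(T,0)\phi(\cdot,0)\le{\rm e}^{\beta T}\phi(\cdot,0)$, and since $\phi(\cdot,0)\gg 0$ is an interior point of the cone this yields $r(\Phi(T,0))\le{\rm e}^{\beta T}$ (a supersolution in the interior of the cone always bounds the spectral radius above, by the standard argument: any other point of the cone is dominated by a multiple of $\phi(\cdot,0)$), hence $s({\mathscr L})\le\beta$.

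The main obstacle I anticipate is the comparison principle in the $L^\infty$/time-differentiable setting: the test functions $\phi$ live in $\mathbb{X}^m$ (merely $C^{0,1}$ in $(x,t)$, not smooth), so one cannot invoke a classical parabolic maximum principle and must argue directly with the integral (Duhamel) formulation for the nonlocal operator, tracking that the perturbation $\phi(\cdot,T)\ge\phi(\cdot,0)$ (resp. $\le$) in case (1) (resp. (2)) feeds correctly into the iteration $\Phi(T,0)^n$. The other delicate point is extracting the spectral-radius bound from a one-sided inequality $\Phi(T,0)w \gtrless \mu w$ when $w$ is only on the boundary of the cone (case (1), $w=\phi(\cdot,0)\ge0$): there I would use irreducibility-free versions, namely that $\Phi(T,0)$ is a positive operator and $w>0$, so $\langle w^*,\Phi(T,0)^n w\rangle\ge\mu^n\langle w^*,w\rangle>0$ for the strictly positive eigenfunctional $w^*$ of $\Phi(T,0)^*$ associated with $r(\Phi(T,0))$ (whose strict positivity follows since the essential radius is controlled by $\max_x r(\Gamma_x(T,0))$ via Lemma \ref{lem:A1}, and the peripheral spectrum is handled by Krein-Rutman applied to the essentially-compact-perturbation structure), which yields $r(\Phi(T,0))\ge\mu$. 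Assembling these and translating through $s({\mathscr L})=\frac{\ln r(\Phi(T,0))}{T}$ completes both parts.
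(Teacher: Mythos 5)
Your overall strategy is the paper's own: translate the differential inequalities into one-sided inequalities for the period map and read off $s({\mathscr L})$ through $s({\mathscr L})=\frac{\ln r(\Phi(T,0))}{T}$ (the paper compares $\phi$ with the solution of the $\beta$-shifted system, i.e.\ works with ${\mathscr P}={\rm e}^{-\beta T}\Phi(T,0)$, gets ${\mathscr P}[\phi(\cdot,0)]\ge\phi(\cdot,0)$ resp.\ $\le\phi(\cdot,0)$, and then invokes Gelfand's formula for part (1) and a normal-solid-cone spectral radius bound for part (2)). However, as written your part (1) has a genuine gap. The exponential tilt is the wrong way and is never actually repaired: with $\psi={\rm e}^{-\beta t}\phi$ the inequality ${\mathscr L}[\phi]\ge\beta\phi$ does \emph{not} give $\psi_{it}\le d_i\int_\Omega J_i\psi_i\,{\rm d}y+\sum_k\ell_{ik}\psi_k$ (it gives the same with an extra $-2\beta\psi_i$), and your "resolution" paragraph still calls ${\rm e}^{-\beta t}\phi$ a subsolution, so the chain you display only yields $r(\Phi(T,0))\ge{\rm e}^{-\beta T}$, i.e.\ $s({\mathscr L})\ge-\beta$. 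The fix is simply $\psi={\rm e}^{+\beta t}\phi$ (equivalently, compare $\phi$ itself with the evolution of the $\beta$-shifted system, which is the paper's ${\mathscr P}$): then comparison plus $\phi(\cdot,T)\ge\phi(\cdot,0)$ gives $\Phi(T,0)\phi(\cdot,0)\ge{\rm e}^{\beta T}\phi(\cdot,0)$, and iterating yields $r(\Phi(T,0))\ge{\rm e}^{\beta T}$.

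The second gap is in how you extract $r(\Phi(T,0))\ge\mu$ from $\Phi(T,0)w\ge\mu w$ with $w=\phi(\cdot,0)$ only on the boundary of the cone: you invoke a \emph{strictly} positive eigenfunctional of the adjoint, but under {\bf(L1)} alone (no irreducibility, and $r$ may even equal $r_e$) such strict positivity is not available, so that step would fail as stated. It is also unnecessary: since the cone $C(\boo,\mathbb{R}_+^m)$ is normal, $\Phi(nT,0)w\ge\mu^n w$ already gives $\|\Phi(T,0)^n\|\ge c\,\mu^n$ (or pair with \emph{any} positive functional that does not vanish on $w$, e.g.\ evaluation at a point where some component of $w$ is positive), and Gelfand's formula — exactly the paper's citation — gives $r(\Phi(T,0))\ge\mu$. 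Your part (2) is sound: the supersolution comparison gives $\Phi(T,0)\phi(\cdot,0)\le{\rm e}^{\beta T}\phi(\cdot,0)$ with $\phi(\cdot,0)\gg0$ an interior point, and the domination argument you sketch is precisely the normal-and-solid-cone result the paper quotes to conclude $r({\mathscr P})\le1$.
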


\begin{proof} For any given $u_0\in C(\boo,\mathbb{R}^m)$, let $u(x,t; u_0)$ be the unique solution of
 \bess\left\{\begin{array}{lll}
u_{it}=d_i\dd\int_\oo J_i(x,y)u_i(y,t)\dy+\sum_{k=1}^n \ell_{ik}(x,t)u_k-\beta u_k,\; &(x,t)\in Q_T,\\[3mm]
u_i(x,0)=u_{i0}(x),\,\not\equiv 0, &x\in\boo,\\[1mm]
i=1,\cdots, m.
 \end{array}\rr.\eess
Define an mapping $\mathscr P:\,C(\boo,\mathbb{R}^m)\to  C(\boo,\mathbb{R}^m)$ by
	\[\mathscr P[u_0]=u(x,T; u_0).\]
	
(1) As $\phi\in \mathbb{X}^m$, $\phi\ge 0$ in $\ol Q_T$ and satisfies ${\mathscr L}[\phi]\ge\beta\phi$, the comparison principle gives $\phi(x,t)\le u(x,t;\phi(x,0))$ in $Q_T$. So
 \[\mathscr P[\phi(\cdot,0)](x)=u(x,T;\phi(x,0))\ge \phi(x,T)\ge\phi(x,0).\]
Then Gelfand's formula (see, e.g., \cite[Theorem VI.6]{ReS80}) implies that $r(\mathscr P)\ge 1$. Noticing that $\mathscr P={\rm e}^{-\beta T}\Phi(T, 0)$, we have ${\rm e}^{-\beta T}r(\Phi(T, 0))=r({\rm e}^{-\beta T}\Phi(T, 0))\ge 1$. By Lemma \ref{lem:A1}(3), ${\rm e}^{-s({\mathscr L})T}r(\Phi(T, 0))=1$. Therefore, $\beta\le s({\mathscr L})$.
	
(2) It is easy to verify that
 \[\mathscr P[\phi(\cdot,0)](x)=u(x,T;\phi(x,0))\le \phi(x,T)\le\phi(x,0).\]
Notice that $C(\boo,\mathbb{R}^m)$ is Banach space with the cone $C(\boo,\mathbb{R}_+^m)$, which is normal and solid. Thanks to \cite[Corollary 2.7]{WZZ2023SIAP}, we obtain that $r(\mathscr P)\le 1$. It then follows from ${\rm e}^{-s({\mathscr L})T}r(\Phi(T, 0))=1$ and ${\rm e}^{-\beta T}r(\Phi(T, 0))\le 1$ that $\beta \ge s({\mathscr L})$.\www
\end{proof}

\begin{lem}\label{lem:PE:exist}	Assume {\bf (L1)} holds. Then $s({\mathscr L})$ is the weak principal eigenvalue of $\mathscr L$ if one of the following statements valids:\vspace{-2mm}
\begin{enumerate}[$(1)$]
\item $r(\Phi(T,0))>r_e(\Phi(T,0))$.\vvv
\item $s({\mathscr L}) >\theta_M=\max_{\boo}\theta(x)=\frac{\ln r_e(\Phi(T,0))}{T}$.\vvv
\item  Define
 \[\widetilde{\mathbb{X}}^m_T=\{\phi:\,\phi_i\in C(\ol Q_T),\;\,\phi_i(x,0)
 =\phi_i(x,T)\;\; {\rm in }\;\;\boo,\,\,\forall\,i\in\mathbb{S}\}.\]
There exists $\beta>\theta_M$ and  $\psi\in\widetilde{\mathbb{X}}^m_T$ with $\psi\ge 0$ in $\ol Q_T$ and $\psi\not\equiv 0$ such that
 \[{\cal J} (\beta I -\cal F)^{-1}\psi\ge\psi,\]
where the operators ${\cal J}$ and ${\cal F}$ are defined by
 \[{\cal J}=({\cal J}_1,\cdots,{\cal J}_m),\;\;\text{with }\;{\cal J}_i[v](x,t)=d_i\dd\int_\oo J_i(x,y)v_i(y,t)\dy,\;\;v\in\widetilde{\mathbb{X}}^m_T,\]
and
  \[{\cal F}=({\cal F}_1,\cdots,{\cal F}_m),\;\;\text{with }\;{\cal F}_i[\phi]=\sum_{k=1}^n\ell_{ik}(x,t)\phi_k(x,t)
  -\phi_{it}(x,t),\;\;\phi\in\widetilde{\mathbb{X}}^m_T\,\]
respectively.
	\end{enumerate}
\end{lem}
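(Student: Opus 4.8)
The plan is to treat the three conditions as successively more concrete routes to the same conclusion: $s(\mathscr L)$ is attained as an eigenvalue with a strictly positive eigenfunction, equivalently (by the discrete-time reduction in Lemma \ref{lem:A1}) $r(\Phi(T,0))$ is an eigenvalue of the positive operator $\Phi(T,0)$ on $C(\boo,\mathbb R^m)$ with a nonnegative eigenfunction. I would first establish that $(1)\Rightarrow$ the conclusion, then show $(2)\Rightarrow(1)$ and $(3)\Rightarrow(2)$ (or $(3)\Rightarrow(1)$ directly), so that the bulk of the work is concentrated in the first implication.

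For $(1)$: since $\Phi(T,0)=\Psi(T,0)+\mathscr K$ with $\mathscr K=\int_0^T\Psi(T,s)\mathcal J\Phi(s,0)\,\mathrm ds$ compact (this is exactly the decomposition used in the proof of Lemma \ref{lem:A1}), $\Phi(T,0)$ is a positive, power-compact-modulo-essential-spectrum operator, and the hypothesis $r(\Phi(T,0))>r_e(\Phi(T,0))$ forces $r(\Phi(T,0))$ to be a pole of the resolvent, hence an eigenvalue, with an eigenfunction in the cone by the standard Krein–Rutman/de Pagter-type argument for positive operators whose spectral radius exceeds the essential one (one can cite the abstract result behind \cite{S1971book, WZZ2023SIAP}). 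Because $\mathbf{(L1)}$ makes $\Phi(T,0)$ positive and the flow order-preserving, Lemma \ref{le2.2}(1)–(2) already pin $s(\mathscr L)$ between any sub- and super-solution bounds; here I would use these to identify the eigenvalue of the time-$T$ map with $\mathrm e^{s(\mathscr L)T}$ and to produce the periodic eigenfunction $\phi(x,t)$ of $\mathscr L$ from the eigenvector $v_0$ of $\Phi(T,0)$ by setting $\phi(\cdot,t)=\mathrm e^{-s(\mathscr L)t}\Phi(t,0)v_0$, which lies in $\mathbb X^m_T$ and is $\ge 0$, i.e. strictly positive — giving the \emph{weak} principal eigenvalue as claimed.

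For $(2)$: by Lemma \ref{lem:A1}(2) we have $r_e(\Phi(T,0))=\max_{\boo}r(\Gamma_x(T,0))=\mathrm e^{\theta_M T}$, so $s(\mathscr L)>\theta_M$ is literally $r(\Phi(T,0))>r_e(\Phi(T,0))$, reducing $(2)$ to $(1)$. For $(3)$: the point is to show the sub-eigenvector inequality $\mathcal J(\beta I-\mathcal F)^{-1}\psi\ge\psi$ with $\beta>\theta_M$ forces the spectral radius of the positive operator $\mathcal T_\beta:=\mathcal J(\beta I-\mathcal F)^{-1}$ on $\widetilde{\mathbb X}^m_T$ to be $\ge 1$; this is a Collatz–Wielandt lower bound (Gelfand's formula applied to iterates of $\mathcal T_\beta$ against $\psi$, exactly as in the proof of Lemma \ref{le2.2}(1)). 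Since $\mathcal F$ has spectral bound exactly $\theta_M$ on this space (the generator $-\partial_t+L$ with periodic boundary conditions, whose spectrum is $\cup_x\sigma(\mathcal P_x)$, maximal real part $\theta_M$), $(\beta I-\mathcal F)^{-1}$ is well defined and positive for $\beta>\theta_M$, and $\lambda\in\sigma(\mathscr L)$ with $\mathrm{Re}\,\lambda>\theta_M$ corresponds precisely to $1\in\sigma(\mathcal J(\lambda I-\mathcal F)^{-1})$ via the resolvent identity $(\lambda I-\mathscr L)=(\lambda I-\mathcal F)(I-\mathcal J(\lambda I-\mathcal F)^{-1})\cdot(\text{shift})$ — here $\lambda$ plays the role of $\beta$. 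Combining $r(\mathcal T_\beta)\ge1$ with the continuity and monotonicity of $\lambda\mapsto r(\mathcal J(\lambda I-\mathcal F)^{-1})$ (decreasing, tending to $0$ as $\lambda\to\infty$) yields a $\lambda^*\ge\beta>\theta_M$ with $r(\mathcal J(\lambda^*I-\mathcal F)^{-1})=1$, hence $\lambda^*\in\sigma(\mathscr L)$ and so $s(\mathscr L)\ge\lambda^*>\theta_M$, which is again $(2)$, hence $(1)$.

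The main obstacle I anticipate is the rigorous reduction in $(3)$: making precise the equivalence "$\lambda\in\sigma(\mathscr L)$, $\mathrm{Re}\,\lambda>\theta_M$ $\iff$ $1\in\sigma\!\big(\mathcal J(\lambda I-\mathcal F)^{-1}\big)$" requires knowing that $\mathcal J(\lambda I-\mathcal F)^{-1}$ is compact (so that $1$ being in its spectrum means $1$ is an eigenvalue) and that the perturbation argument is valid on the right function space $\widetilde{\mathbb X}^m_T$ rather than merely on $C(\boo,\mathbb R^m)$ — one must check that $(\lambda I-\mathcal F)^{-1}$ maps into the domain where $\mathcal J$ acts compactly, using assumption $\mathbf{(J)}$ (continuity of $J_i$, $J_i(x,x)>0$) for the compactness and the smoothing of the ODE resolvent in $t$. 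A secondary technical point is upgrading a nonnegative eigenfunction produced by Krein–Rutman to one that is \emph{strictly} positive in the sense $\phi(\cdot,0)\not\equiv0$ (not strongly positive — that needs $\mathbf{(L2)}$, which is deliberately not assumed here); this follows from the fact that the time-$T$ map cannot annihilate a nonzero nonnegative initial datum, but it should be stated carefully.
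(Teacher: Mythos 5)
Your treatment of cases (1) and (2) is exactly the paper's: (1) is the generalized Krein--Rutman theorem applied to the time-$T$ map (using the compact-perturbation decomposition from Lemma \ref{lem:A1}), and (2) reduces to (1) via $s({\mathscr L})=\frac{\ln r(\Phi(T,0))}{T}$ and $\theta_M=\frac{\ln r_e(\Phi(T,0))}{T}$. Where you genuinely diverge is case (3). You route it through the operator ${\mathcal T}_\lambda={\mathcal J}(\lambda I-{\mathcal F})^{-1}$: a Collatz--Wielandt bound $r({\mathcal T}_\beta)\ge 1$, a two-sided resolvent factorization identifying $\lambda\in\sigma({\mathscr L})$ (for $\lambda>\theta_M$) with $1\in\sigma({\mathcal T}_\lambda)$, and then continuity, strict monotonicity and decay of $\lambda\mapsto r({\mathcal T}_\lambda)$ to locate $\lambda^*\ge\beta$ with $r({\mathcal T}_{\lambda^*})=1$. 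The paper instead disposes of (3) in a few lines: set $\phi=(\beta I-{\mathcal F})^{-1}\psi$, which is the unique $T$-periodic solution of $\phi_{it}=\sum_k\ell_{ik}\phi_k-\beta\phi_i+\psi_i$, hence $\phi\in\widetilde{\mathbb X}^m_T$, $\phi\ge 0$ with $\phi(\cdot,0)\not\equiv 0$, and the hypothesis ${\mathcal J}\phi\ge\psi=\beta\phi-{\mathcal F}\phi$ gives ${\mathscr L}\phi={\mathcal J}\phi+{\mathcal F}\phi\ge\beta\phi$; Lemma \ref{le2.2}(1) then yields $s({\mathscr L})\ge\beta>\theta_M$, which is case (2). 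The comparison: your scheme, if completed, produces an actual spectral point $\lambda^*$ solving $r({\mathcal T}_{\lambda^*})=1$, which is a stronger piece of information, but for the lemma at hand it is unnecessary overhead, and the very steps you flag as obstacles (compactness of ${\mathcal T}_\lambda$ on $\widetilde{\mathbb X}^m_T$, validity of the factorization on that space, and the continuity/monotonicity/decay of $r({\mathcal T}_\lambda)$) are left unproved in your write-up, so as it stands case (3) has a gap that the paper's direct test-function argument closes with no extra machinery. If you keep your route, you must supply those three ingredients (compactness via Arzel\`a--Ascoli using the $t$-smoothing of $(\lambda I-{\mathcal F})^{-1}$ and the uniform continuity of $J_i$; the factorization $\lambda I-{\mathscr L}=(I-{\mathcal T}_\lambda)(\lambda I-{\mathcal F})$ for $\lambda>\theta_M$; and the spectral-radius asymptotics); otherwise, replace the whole detour by the single inequality ${\mathscr L}\big[(\beta I-{\mathcal F})^{-1}\psi\big]\ge\beta(\beta I-{\mathcal F})^{-1}\psi$ together with Lemma \ref{le2.2}(1), noting also that you should justify $\phi(\cdot,0)\not\equiv 0$ from the positivity of the evolution family $\Gamma_x$ in the periodic representation of $\phi(x,0)$.
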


\begin{proof} (1) When $r(\Phi(T,0))>r_e(\Phi(T,0))$, the desired conclusion is a straightforward result of the generalized Krein-Rutman theorem (see, e.g., \cite[Corollary 2.2]{N1981FPT} and \cite[Lemma 2.4]{Zhang24}).\vvv
	
(2) Thanks to Lemma \ref{lem:A1}, $s({\mathscr L})= \frac{\ln r(\Phi(T,0))}{T}$. Then the desired conclusion can be derived by the conclusion (1).
	
(3) In view of $\beta>\theta_M$, we have that $r({\rm e}^{-\beta T}\Gamma_x(T,0))<1$ for all $x\in\boo$ and $(\beta I -{\cal F})^{-1}$ is well-defined. Let $\phi=(\beta I -{\cal F})^{-1}\psi$. Then \[\phi (x,0)= (I-{\rm e}^{-\beta T}\Gamma_x(T,0) )^{-1}\int_{0}^{T} {\rm e}^{-\beta (T-s)}\Gamma_x(T,s) \psi (x,s) \mathrm{d} s,\]
and $\phi(x,t)$ is the unique $T$-periodic solution of
 \[\phi_{it}=\sum_{k=1}^{m} \ell_{ik}(x,t)\phi_k-\beta \phi_k +\psi_i(x,t),\; (x,t)\in Q_T.\]
This implies that
 \[\phi(x,t)= {\rm e}^{-\beta t}\Gamma_x(t,0)\phi(x,0)+\int_{0}^{t}{\rm e}^{-\beta (t-s)}\Gamma_x(t,s)\psi(x,s)\mathrm{d} s, \; (x,t)\in \bqq.\]	
So $\psi=\beta \phi -{\cal F}\phi$ and $\phi(\cdot,0)\neq 0$, and hence, ${\mathscr L}\phi ={\cal J}\phi +{\cal F}\phi\ge\beta\phi$. According to Lemma  \ref{le2.2}(1), $s({\mathscr L})\ge\beta>\theta_M$. This completes the proof.
\end{proof}\www\zzz

\begin{lem}\lbl{lA.3} Assume that {\bf (L1)} holds and there exists an open set $\Omega_0\subset\Omega$ such that $[\theta_M-\theta(x)]^{-1}\not \in L^1(\Omega_0)$. Then $s({\mathscr L})$ is the weak principal eigenvalue of ${\mathscr L}$.\www
\end{lem}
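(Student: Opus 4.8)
The plan is to reduce Lemma \ref{lA.3} to Lemma \ref{lem:PE:exist}(3) by constructing, from the hypothesis $[\theta_M-\theta(x)]^{-1}\notin L^1(\Omega_0)$, a suitable subsolution pair $(\beta,\psi)$ satisfying the resolvent inequality ${\cal J}(\beta I-{\cal F})^{-1}\psi\ge\psi$ for some $\beta>\theta_M$. First I would fix, for each $x\in\boo$, a strictly positive Floquet eigenvector $\varphi_x\in\mathbb{R}^m$ associated with the spectral bound $\theta(x)=s(\mathscr P_x)$ of the finite-dimensional periodic operator $\mathscr P_x$ from \eqref{z1.6}; by the Perron--Frobenius/Krein--Rutman theory for cooperative ODE systems (using {\bf(L1)}) this eigenvector can be chosen $T$-periodic in $t$, continuous and strictly positive, and — after a compactness/continuity argument in $x$ — depending continuously on $x\in\boo$ and bounded above and below by positive constants uniformly in $(x,t)$. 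This gives a reference family of positive periodic functions that diagonalize $\mathscr P_x$.

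Next I would argue by contradiction: suppose $s({\mathscr L})=\theta_M$ (otherwise Lemma \ref{lem:PE:exist}(2) already yields the conclusion). The goal is then to produce, for every $\beta>\theta_M$, a nonnegative nonzero $\psi\in\widetilde{\mathbb{X}}^m_T$ with ${\cal J}(\beta I-{\cal F})^{-1}\psi\ge\psi$; by Lemma \ref{lem:PE:exist}(3) this forces $s({\mathscr L})\ge\beta>\theta_M$, a contradiction, proving $s({\mathscr L})>\theta_M$ and hence — again by Lemma \ref{lem:PE:exist}(2) — that $s({\mathscr L})$ is the weak principal eigenvalue. To build $\psi$ I would localize on $\Omega_0$: take $\psi(x,t)=\eta(x)\varphi_x(t)$ for a nonnegative test function $\eta$ supported in $\Omega_0$. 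Because $(\beta I-{\cal F})^{-1}$ acts fiberwise in $x$ and $\varphi_x$ is the principal eigenfunction of ${\cal F}_x=\ell(x,\cdot)\cdot-\partial_t$ with eigenvalue $\theta(x)$, one computes $(\beta I-{\cal F})^{-1}\psi(x,t)=\dfrac{\eta(x)}{\beta-\theta(x)}\varphi_x(t)$. Applying ${\cal J}$ and using {\bf(J)} (in particular $J_i(x,x)>0$ and the continuity of $J_i$), the leading contribution of ${\cal J}(\beta I-{\cal F})^{-1}\psi$ at a point $x$ is of order $\dd\int_{\Omega_0}\dfrac{J_i(x,y)\,\eta(y)}{\beta-\theta(y)}\,dy$, which — since $[\theta_M-\theta(\cdot)]^{-1}\notin L^1(\Omega_0)$ and $\beta\downarrow\theta_M$ makes $\dfrac1{\beta-\theta(y)}\to\dfrac1{\theta_M-\theta(y)}$ — can be made arbitrarily large relative to $\psi$ on the support of $\eta$, for $\beta$ close to $\theta_M$ and $\eta$ concentrated near a point where $\theta$ approaches $\theta_M$. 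A careful choice of $\eta$ (e.g. $\eta=\chi_{\{\theta>\theta_M-\delta\}\cap\Omega_0}$ for small $\delta$, then smoothed) then yields ${\cal J}(\beta I-{\cal F})^{-1}\psi\ge\psi$ componentwise.

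The main obstacle I anticipate is making the "fiberwise diagonalization" rigorous and uniform: ensuring that $\theta(x)$ is continuous in $x$ (so that $\{\theta>\theta_M-\delta\}\cap\Omega_0$ has positive measure and the integral $\int_{\Omega_0}[\theta_M-\theta(y)]^{-1}dy$ diverging genuinely forces the local integrals to blow up), that the eigenvectors $\varphi_x$ can be normalized to stay in a fixed compact subset of $\mathrm{Int}(\mathbb{R}^m_+)$ so that the inner-product/componentwise comparisons are uniform in $x$, and that the off-diagonal (i.e. $y\ne x$) contributions in ${\cal J}$, together with the "remainder" coming from the fact that $\psi$ is only supported on $\Omega_0$ rather than being a global eigenfunction, do not destroy the inequality — these are nonnegative by cooperativity of $L$ and positivity of $J_i$, which actually helps, but the bookkeeping must be done with care. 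A secondary technical point is checking $\psi\in\widetilde{\mathbb{X}}^m_T$ (continuity in $(x,t)$ and $T$-periodicity), which follows once $x\mapsto\varphi_x(\cdot)$ is shown continuous into $C([0,T];\mathbb{R}^m)$; this in turn rests on continuous dependence of the principal Floquet data of $\mathscr P_x$ on $x$, a standard but not entirely trivial perturbation argument. Once these are in place, the contradiction argument closes the proof.
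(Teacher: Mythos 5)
Your overall route is the same as the paper's: reduce to Lemma \ref{lem:PE:exist}(3) by taking $\psi=$ (spatial cutoff) $\times$ (fiberwise periodic eigenfunction of $\mathscr P_x$), use that $(\beta I-{\cal F})^{-1}$ acts fiberwise so that $(\beta I-{\cal F})^{-1}\psi=[\beta-\theta(x)]^{-1}\psi$, and exploit the divergence of $\int[\theta_M-\theta(y)]^{-1}\,{\rm d}y$ together with the local kernel bound $J_i(x,y)\ge c_0$ for $|x-y|<r_0$ to force ${\cal J}(\beta I-{\cal F})^{-1}\psi\ge\psi$ for some $\beta$ slightly above $\theta_M$. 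The paper does exactly this with $\hat w(x,t)=p(x)w(x,t)$, a small set $\Sigma_\delta$ around a maximizer $\bar x\in\overline{\Omega}_0$ of $\theta$, and one $\beta_0>\theta_M$ close to $\theta_M$; your contradiction detour (assuming $s({\mathscr L})=\theta_M$) and the claim that $\psi$ can be produced for \emph{every} $\beta>\theta_M$ are unnecessary, since a single admissible $\beta$ and Lemma \ref{lem:PE:exist}(3) already give the conclusion directly.

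There is, however, a genuine gap in how you set up the fiberwise eigenvectors. You assert that under {\bf(L1)} alone the principal Floquet eigenvector $\varphi_x$ of $\mathscr P_x$ can be chosen strictly positive (all components positive), continuous in $x$, and bounded above and below by positive constants uniformly in $(x,t)$. But {\bf(L1)} is only cooperativity; no irreducibility is assumed here (that is {\bf(L2)}, which Lemma \ref{lA.3} does not use). For a reducible cooperative $L(x,t)$ — say diagonal — the eigenvector attached to $\theta(x)=s(\mathscr P_x)$ is in general only nonnegative and nonzero: whole components vanish, there is no uniform positive lower bound, and the normalized eigenvector can even switch direction as $x$ crosses a point where the dominant block changes. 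Hence the componentwise comparison you need on the support of $\eta$, and the normalization "in a fixed compact subset of ${\rm Int}(\mathbb{R}^m_+)$", cannot be established as stated. This is precisely what the paper's proof is engineered around: it works with a merely nonnegative eigenfunction $w$, fixes $\bar x\in\overline{\Omega}_0$ with $\theta(\bar x)=\theta_M$ (implicitly one near which the non-integrability persists locally — a compactness step you also leave implicit), introduces $\tilde{\mathbb S}=\{i:\,w_i(\bar x,\cdot)>0\}$, and only needs the lower bound $c_1=\min_{i\in\tilde{\mathbb S}}\min_{\Sigma_\sigma\times[0,T]}w_i>0$ on a small neighborhood of $\bar x$, the remaining components being handled by the trivial estimate $\ge 0$. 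To repair your plan you would have to localize in exactly this way, at which point you have reproduced the paper's argument.
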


\begin{proof}	The desired conclusion can be derived by modifying the arguments to those in \cite[Proposition 3.4 and Lemma 4.1]{BS17PAMS}. For reader's convenience, we provide the details.

Let $\theta(x)$ be the weak principal eigenvalue of ${\mathscr P}_x$ and $w(x,t)$ be the corresponding eigenfunction which is strictly positive and continuous in $\bqq$, and $\max_{i\in\mathbb{S}}\max_{\bqq}w_i=1$.
Choose $\bar{x}\in\overline{\Omega}_0$ such that $\theta(\bar{x})=\theta_M=\max_{x\in\ol\oo} \theta(x)$. Define
 \[\tilde{\mathbb{S}}=\{i\in\mathbb{S}: w_i(\bar{x},t)>0\}.\]
By the continuity, there exists $\sigma>0$ such that $w_i(x,t)>0$ in $\Sigma_\sigma\times[0,T]$ for $i\in\tilde{\mathbb{S}}$, where $\Sigma_\sigma=\overline{B(\bar{x},\sigma)\cap\oo}$. Let
 \[c_1= \min_{i\in\tilde{\mathbb{S}}}\min_{{\Sigma_\sigma}\times [0,T]}w_i.\]
Besides, by the continuity of $J_i$, there exist $r_0>0$ and $c_0>0$ such that
 \[J_i(x,y)>c_0,\;\;\forall\;x,y\in\boo\;\;\mbox{with}
  \;\;|x-y|<r_0, \;t \in [0,T],\; i\in\mathbb{S}.\]
In view of $[\theta_M-\theta(x)]^{-1}\not\in L^1(\Omega_0)$, we can choose $0<\delta<\min\{\sigma,\,r_0/3\}$ and $\beta_0>\theta_M$ such that
  \[\int_{\Sigma_\delta}[\beta_0-\theta(x)]^{-1}\dx \ge\frac 2{c_0c_1}.\]
Take a function $p\in C(\boo)$ with $\max_{\boo} p=1$, and $p(x)=1$ in $\Sigma_\delta$, $p(x)=0$ in $\Sigma_{2\delta}$.

Write $\hat{w}(x,t)=w(x,t)p(x)$, $(x,t)\in\bqq$. We have
 \[[(\beta_0 I-{\cal F})^{-1}\hat{w}](x,t)=[\beta_0 -\theta(x)]^{-1}w(x,t)p(x),\;(x,t)\in\bqq.\]
It then follows that
  \[\begin{aligned}
\int_{\Omega} J_i(x,y)[\beta_0-\theta(y)]^{-1}\hat{w}_i(y,t)\dy
&\ge \int_{{\Sigma_\delta}} J_i(x,y)[\beta_0-\theta(y)]^{-1}\hat{w}_i(y,t)\dy\\
&=\int_{{\Sigma_\delta}} J_i(x,y)[\beta_0-\theta(y)]^{-1}w_i(y,t) \dy\\
&\ge 2\hat{w}_i(x,t)
	\end{aligned}\]
when $i\in\tilde{\mathbb{S}}$, $x\in{\Sigma_\delta}$ and $t\in[0,T]$; and
 \[\int_{\Omega} J_i(x,y)(\beta_0-\theta(y))^{-1}\hat{w}_i(y,t)\dy\ge 0\]
when either $i\in\tilde{\mathbb{S}}$, $x\in\boo \setminus {\Sigma_{2\delta}}$ and $t \in [0,T]$, or $i\in\sss\setminus\tilde{\mathbb{S}}$, $x\in\boo$ and $t \in [0,T]$. This indicates that ${\cal J}(\beta_0 I-{\cal F})^{-1}\hat{w}\ge 2\hat{w}$, and the desired conclusion can be derived by Lemma \ref{lem:PE:exist}(3).\zzz
\end{proof}

\begin{lem}\label{lem:SP} Assume that {\bf (L1)} and {\bf (L2)} hold. Then $\Phi(nT,0)$ is strongly positive for $n>m$, that is, $\Phi(nT,0) u_0 \gg 0$ for all { $u_0\in[C(\boo)]^m$,} $u_0 >0$ and $n > m$.\zzz
	\end{lem}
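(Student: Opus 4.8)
The plan is to show that the time-$nT$ evolution operator $\Phi(nT,0)$ becomes strongly positive once $n>m$, by tracking how positivity ``spreads'' both in the index set $\sss$ (via irreducibility) and in the spatial variable (via the nonlocal kernel assumption $J_i(x,x)>0$). First I would fix $u_0\in[C(\boo)]^m$ with $u_0>0$, so that $u_0$ has at least one component $u_{k_0 0}$ which is positive on a nonempty open subset of $\boo$. Since {\bf(L1)} guarantees $\Phi$ is a positive evolution family, by the comparison principle it suffices to show that each component of $u(x,t;u_0)$ is strictly positive for $t$ large, and then to upgrade ``strictly positive'' to ``bounded away from zero on $\boo$.'' A standard trick is to compare $u$ from below with the solution of a decoupled system: using cooperativity, drop all off-diagonal coupling to get a lower bound, but to propagate positivity between components one instead keeps, at each stage, just enough coupling. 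Concretely, I would prove by induction on $j$ that for any ordering $i_0=k_0, i_1,\dots,i_{j}$ along which the averaged matrix $(\bar\ell_{ik})$ is ``connected'' (which exists by irreducibility from {\bf(L2)}), the $i_j$-th component of $u(\cdot,t;u_0)$ is positive on some open spatio-temporal set once $t>(\text{something})\cdot T$; since the graph has $m$ vertices, after at most $m$ such steps every component has been activated, which is why $n>m$ suffices.

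The key mechanism in each inductive step is twofold. First, if $u_i(\cdot,\cdot;u_0)$ is positive on an open set $\mathcal{O}\subset Q_T$, then because $\ell_{ki}\ge 0$ and $\bar\ell_{ki}>0$ for the next index $k$ in the chain, the forcing term $\int_0^T\int_\oo \ell_{ki}(x,t)u_i\,\dx\dt$ is positive, so the $k$-th equation has a nonzero source and $u_k$ becomes positive on an open set after a further time lapse (at most one period $T$, by periodicity of the coefficients and the strong positivity of the self-kernel). Second, within a single component, the nonlocal term $d_k\int_\oo J_k(x,y)u_k(y,t)\,\dy$ together with $J_k(x,x)>0$ (assumption {\bf(J)}) spreads positivity in space: if $u_k(y_0,t_0)>0$ then $u_k(x,t)>0$ for $x$ in a neighborhood of $y_0$ and $t>t_0$, and iterating this covers the connected set $\boo$ in finitely many steps (the number of which is absorbed into the constant, since $\boo$ is compact and connected with smooth boundary). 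Combining, after $n$ periods with $n>m$ every component $u_i(x,nT;u_0)$ is strictly positive everywhere on $\boo$, and by compactness of $\boo$ and continuity it is in fact bounded below by a positive constant, i.e. $\Phi(nT,0)u_0\gg 0$.

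For the rigorous write-up I would lean on two already-available tools: the comparison principle for cooperative nonlocal systems (implicit in the construction of $\Phi$ and used repeatedly above, e.g. in the proof of Lemma~\ref{le2.2}), and the scalar positivity-propagation fact for a single nonlocal equation with a nonnegative source — namely that if $v_t= d\int_\oo J(x,y)v\,\dy - d^*(x)v + g$ with $v(\cdot,0)\ge0$, $v(\cdot,0)\not\equiv0$ (or $g\gneq 0$), then $v(x,t)>0$ for all $x\in\boo$ and all $t>0$; this is classical for nonlocal dispersal (see the references to \cite{SZ10JDE, B-JFA16} type arguments) and follows from writing $v(x,t)=\mathrm{e}^{-d^*(x)t}v(x,0)+\int_0^t \mathrm{e}^{-d^*(x)(t-s)}\big(d\int_\oo J v\,\dy+g\big)\,\mathrm{d}s$ and bootstrapping with $J(x,x)>0$ plus connectedness of $\boo$. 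With these, the induction is clean: at the $j$-th stage one has positivity of the components indexed by a set $\mathbb{S}_j\subset\sss$ with $|\mathbb{S}_j|\ge j+1$ on all of $\boo\times\{t\}$ for $t\ge jT$, and irreducibility forces $\mathbb{S}_{j+1}\supsetneq\mathbb{S}_j$ until $\mathbb{S}_j=\sss$, reached by $j=m$ at the latest.

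The main obstacle I anticipate is the spatial propagation step inside a single component: unlike the local (Laplacian) case there is no instantaneous smoothing, so one cannot claim $u_k(x,t)>0$ everywhere the instant $u_k$ is positive somewhere — one genuinely needs a covering/chaining argument over $\boo$ using $J_k(x,x)>0$, and one must check that the number of chaining steps needed to cover the connected compact set $\boo$ is finite and can be absorbed so that the total elapsed time stays below $nT$ with $n>m$ the only constraint. A secondary subtlety is bookkeeping the time increments: each index-activation and each spatial-chaining step costs some positive time, and one should argue (using $T$-periodicity of the coefficients, so that the ``positivity-spreading speed'' does not degenerate over successive periods, and the semigroup property $\Phi(nT,0)=\Phi(T,0)^n$) that one full period $T$ suffices to both spread a component over all of $\boo$ and activate the next index — this is exactly what makes the clean bound ``$n>m$'' come out rather than something like ``$n$ large.''
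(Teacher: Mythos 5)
Your proposal is correct and follows essentially the same route as the paper: positivity is spread within each scalar component by the strong positivity lemma for the scalar nonlocal inequality (the paper's Lemma \ref{le3.1}, which you state and sketch in your third paragraph), and irreducibility of the time-averaged matrix $(\bar\ell_{ik})$ in {\bf(L2)} forces the set of strictly positive components to grow by at least one during each period, so all $m$ components are strictly positive (hence, by continuity and compactness of $\boo$, bounded below) after at most $m$ periods — the paper phrases this last step contrapositively, showing that if the set of identically vanishing components did not shrink over a period then $\ell_{ik}\equiv 0$ there and hence $\bar\ell_{ik}=0$, contradicting {\bf(L2)}. The obstacle you flag in your final paragraph is not genuine: the scalar lemma you invoke already yields $u_k(x,t)>0$ for every $x\in\boo$ at every $t>0$ past the moment the component (or its source) becomes nontrivial, since the spatial chaining is carried by the iterated kernels in the Duhamel/series expansion and costs no elapsed time, so only the index-activation steps consume time (at most one period each), which is precisely why the clean bound $n>m$ emerges without any covering bookkeeping.
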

	
\begin{proof}  Let $u(\cdot, t)=\Phi(t,0)u_0$. Then $u\in[C(\boo\times[0,\yy))]^m$, $u\ge 0$, and for each $i\in\sss$,
 \[u_{it}\ge d_i\dd\int_\oo J_i(x,y)u_i(y,t)\dy-d^*_i(x)u_i(x,t)
 +\ell_{ii}(x)u_i(x,t),\;\; (x,t)\in \boo \times (0,+\infty).\]
Take advantage of Lemma \ref{le3.1} in Section \ref{Ses4}, it follows that for each $i\in\sss$,  either $u_i(x,t)\equiv 0$ in $\boo\times[0,+\infty)$, or there exists $t_0\in (0,+\infty)$ such that $u_i(x,t)>0$ in $\boo\times(t_0,+\infty)$.
		
Define
 \[{\cal I}_n=\{i\in\sss:\,u_i\equiv 0\;\;\mbox{in} \; \boo \times[(n-1)T,nT]\},\]
and ${\cal I}_n^c=\sss\setminus{\cal I}_n$. If ${\cal I}_1=\emptyset$, we can obtain that $u_i(x,t)>0$ in $\boo\times(T,+\infty)$, $i \in \sss$, which yields the desired conclusion.
		
Now we assume that ${\cal I}_1\neq\emptyset$. Since the initial data is nonzero, ${\cal I}_1 \neq \sss$. We then claim that $\#{\cal I}_{n+1}<\#{\cal I}_{n}$ whenever ${\cal I}_{n} \neq \emptyset$, where $\#{\cal I}_{n}$ is the amount of elements in the set ${\cal I}_{n}$. Suppose that $\#{\cal I}_{n_0+1}=\#{\cal I}_{n_0}$, that is, ${\cal I}_{n_0+1}={\cal I}_{n_0}$ for some $n_0$. For convenience, we still write $n =n_0$. Notice that ${\cal I}_{n+1}^c={\cal I}_{n}^c$, we have $u_k>0$ in $\boo\times[nT,+\yy)$ for all $k\in{\cal I}_{n+1}^c$.  An easy computation yields that, for each $i\in{\cal I}_{n+1}$, there holds:
 \bess
0=u_{it}(x,t)&\ge& d_i\dd\int_\oo J_i(x,y)u_i(y,t)\dy-d^*_i(x)u_i(x,t)
+\sum_{k=1}^m \ell_{ik}(x,t)u_k(x,t)\\[1mm]
&=&\sum_{k\in {\cal I}_{n+1}^c} \ell_{ik}(x,t)u_k(x,t),\;\;\forall\,(x,t)\in  \boo \times[nT,(n+1)T].
\eess
This implies that $\ell_{ik}\equiv 0$ in $\boo \times[nT,(n+1)T]$ for all $i\in {\cal I}_{n+1}$ and $k\in {\cal I}_{n+1}^c$, which contradicts the fact that $(\bar{\ell}_{ik})_{m\times m}$ is irreducible. We thus have that $\#{\cal I}_{n+1}<\#{\cal I}_{n}$ whenever ${\cal I}_{n} \neq \emptyset$, and the desired result holds true. \zzz\end{proof}

Making use of Lemmas \ref{lA.3} and \ref{lem:SP}, we have the following sufficient condition to ensure the existence of principal eigenvalue.\zzz

\begin{lem}\lbl{lem:2.1}  Assume {\bf (L1)} and {\bf (L2)} hold. If there exists an open set $\Omega_0\subset\Omega$ such that $[\theta_M-\theta(x)]^{-1}\not \in L^1(\Omega_0)$, then $s({\mathscr L})$ is the principal eigenvalue of ${\mathscr L}$, denoted by $\lm_p({\mathscr L})$.
\end{lem}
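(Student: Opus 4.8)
The plan is to combine the two previous lemmas in the obvious way: Lemma~\ref{lA.3} already gives that $s({\mathscr L})$ is the \emph{weak} principal eigenvalue of ${\mathscr L}$ under {\bf (L1)} together with the integrability failure $[\theta_M-\theta(x)]^{-1}\notin L^1(\Omega_0)$, so the only thing left to prove is that, when {\bf (L2)} is additionally assumed, the strictly positive eigenfunction produced by Lemma~\ref{lA.3} is in fact \emph{strongly} positive, i.e. lies in $\mathbb{P}^m$ (equivalently $\phi\gg 0$ in $\bqq$). This upgrade is exactly the content that distinguishes the weak principal eigenvalue from the principal eigenvalue, and it should follow from the strong positivity of the evolution family $\Phi$ established in Lemma~\ref{lem:SP}.

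First I would take $\phi\in\widetilde{\mathbb{X}}^m_T$ with $\phi>0$ in $\bqq$, $\phi(\cdot,0)=\phi(\cdot,T)$, and ${\mathscr L}\phi=s({\mathscr L})\phi$, which Lemma~\ref{lA.3} provides. Writing $\beta=s({\mathscr L})$ and rearranging, $\phi$ is a nonnegative, nonzero $T$-periodic solution of the linear system $v_{it}=d_i\int_\oo J_i(x,y)v_i(y,t)\dy+\sum_k\ell_{ik}(x,t)v_k-\beta v_i$, whose time-$T$ evolution family is ${\rm e}^{-\beta T}\Phi(T,0)$; periodicity gives $\phi(\cdot,0)={\rm e}^{-\beta T}\Phi(T,0)\phi(\cdot,0)$, and iterating, $\phi(\cdot,0)={\rm e}^{-n\beta T}\Phi(nT,0)\phi(\cdot,0)$ for every $n$. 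Since $\phi(\cdot,0)>0$ in $\boo$ (it cannot vanish identically at $t=0$, otherwise by the comparison/uniqueness argument as in the proof of Lemma~\ref{lem:SP} it would vanish for all $t$, contradicting $\phi>0$), Lemma~\ref{lem:SP} applied with $n>m$ yields $\Phi(nT,0)\phi(\cdot,0)\gg 0$, hence $\phi(\cdot,0)\gg 0$ in $\boo$. Propagating this forward through the evolution family on $(0,T]$ and using strong positivity of $\Phi(t,0)$ for $t>0$ — again via Lemma~\ref{lem:SP}, or directly from the same Harnack-type step used there — one gets $\phi\gg 0$ on all of $\bqq$. Finally, the $C^{0,1}$ regularity in time (membership in $\mathbb{X}^m_T$ rather than just $\widetilde{\mathbb{X}}^m_T$) comes from the equation ${\mathscr L}\phi=\beta\phi$ itself: it expresses $\phi_{it}$ as a continuous function of $(x,t)$, so $\phi_i(x,\cdot)\in C^1$, and thus $\phi\in\mathbb{P}^m$.

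I would expect the main obstacle to be the bookkeeping around which function space $\phi$ lives in and the transfer of strong positivity across the time interval. Lemma~\ref{lA.3}/Lemma~\ref{lem:PE:exist}(3) deliver the eigenfunction a priori only in $\widetilde{\mathbb{X}}^m_T$ (merely continuous), so one must first bootstrap time-regularity from the eigenvalue equation before it is legitimate to call $s({\mathscr L})$ the principal eigenvalue in the sense of membership in $\mathbb{P}^m\subset\mathbb{X}^m_T$; and Lemma~\ref{lem:SP} is stated for the evolution family at integer multiples of $T$ on initial data in $[C(\boo)]^m$, so strong positivity at a general time $t\in(0,T)$ needs the extra remark that $\Phi(t+nT,0)=\Phi(t,nT)\Phi(nT,0)$ with $\Phi(t,nT)$ positivity-preserving and $\Phi(nT,0)$ strongly positive. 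Both are routine given the tools already assembled, so the proof will be short. A secondary point to be careful about is the harmless abuse that Lemma~\ref{lem:SP} invokes Lemma~\ref{le3.1} from Section~\ref{Ses4}, i.e. the forward-reference is already present in the excerpt and may simply be cited.

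\begin{proof}
Under {\bf (L1)} and the hypothesis $[\theta_M-\theta(x)]^{-1}\notin L^1(\Omega_0)$, Lemma~\ref{lA.3} guarantees that $s({\mathscr L})$ is the weak principal eigenvalue of ${\mathscr L}$; let $\phi$ be a corresponding eigenfunction with $\phi>0$ in $\bqq$, $\phi(\cdot,0)=\phi(\cdot,T)$ in $\boo$, and set $\beta=s({\mathscr L})$. From ${\mathscr L}\phi=\beta\phi$ we read off that $\phi_i(x,\cdot)\in C^1([0,T])$ for every $x\in\boo$ and $i\in\sss$, so $\phi\in\mathbb{X}^m_T$. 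The function $\phi$ is a nonnegative $T$-periodic solution of the linear problem \eqref{equ:JG} with $\ell_{ik}$ replaced by $\ell_{ik}-\beta\delta_{ik}$, whose $T$-periodic evolution family is ${\rm e}^{-\beta T}\Phi(T,0)$; hence $\phi(\cdot,0)={\rm e}^{-n\beta T}\Phi(nT,0)\phi(\cdot,0)$ for all $n\in\mathbb{N}$. Moreover $\phi(\cdot,0)\not\equiv 0$: if it vanished identically, the comparison argument used in the proof of Lemma~\ref{lem:SP} (via Lemma~\ref{le3.1}) would force $\phi\equiv 0$ in $\bqq$, contradicting $\phi>0$. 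Thus $\phi(\cdot,0)>0$ in $\boo$, and by Lemma~\ref{lem:SP} (applied with $n>m$), $\Phi(nT,0)\phi(\cdot,0)\gg 0$, whence $\phi(\cdot,0)\gg 0$ in $\boo$. For $t\in(0,T]$, writing $\Phi(t+nT,0)=\Phi(t,nT)\Phi(nT,0)$ with $\Phi(t,nT)$ positivity-preserving and using the strong positivity provided by Lemma~\ref{lem:SP}, we obtain $\phi(\cdot,t)={\rm e}^{-(t+nT)\beta}\Phi(t+nT,0)\phi(\cdot,0)\gg 0$ for every $t\in[0,T]$. Therefore $\phi\gg 0$ in $\bqq$, i.e. $\phi\in\mathbb{P}^m$, and $s({\mathscr L})$ is the principal eigenvalue $\lm_p({\mathscr L})$ of ${\mathscr L}$.
\end{proof}
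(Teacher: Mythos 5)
Your proposal is correct and follows essentially the same route as the paper: invoke Lemma \ref{lA.3} to get the weak principal eigenvalue with a strictly positive eigenfunction, note $\phi(\cdot,0)\not\equiv 0$ (else $\phi\equiv 0$ via the evolution-family representation), and then upgrade to $\phi\gg 0$ by applying the strong positivity of $\Phi(nT,0)$, $n>m$, from Lemma \ref{lem:SP} together with $T$-periodicity. The extra remarks you add (bootstrapping $C^1$ time-regularity from the eigenvalue equation and propagating strong positivity to intermediate times via $\Phi(t,nT)\Phi(nT,0)$) are harmless refinements of the same argument, which the paper only sketches.
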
\www

\begin{proof} For reader's convenience, we provide the outline of the proof. 	 In view of Lemma \ref{lA.3}, $s({\mathscr L})$ is an eigenvalue of ${\mathscr L}$ corresponding to a strictly postive eigenfunction $\phi$. Clearly, $\phi(x,0)$ is strictly positive, and otherwise $\phi(t,0)={\rm e}^{-\lm_p({\mathscr L})t}\Phi(t,0) \phi(\cdot,0) \equiv 0$, a contradiction. We further have $ \phi(t,0)={\rm e}^{-\lm_p({\mathscr L})t}\Phi(t,0) \phi(\cdot,0) \gg 0$ when $t > mT$ by Lemma \ref{lem:SP}.
\end{proof}\www

\begin{lem}	Assume {\bf (L1)} holds. If $s({\mathscr L})$ is the principal eigenvalue of ${\mathscr L }$, then $s({\mathscr L}) >\theta_M$.
\end{lem}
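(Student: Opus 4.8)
The plan is to argue by contradiction: assume $s(\mathscr L)$ is the principal eigenvalue but $s(\mathscr L)\le\theta_M$, and derive a contradiction from the structure of the essential spectrum identified in Lemma~\ref{lem:A1}. Recall from Lemma~\ref{lem:A1}(2)--(3) that $r_e(\Phi(T,0))=\max_{x\in\boo}r(\Gamma_x(T,0))$ and $s(\mathscr L)=\frac{\ln r(\Phi(T,0))}{T}$, while $\theta_M=\max_{\boo}\theta(x)=\frac{\ln r_e(\Phi(T,0))}{T}$. Hence the inequality $s(\mathscr L)\le\theta_M$ is equivalent to $r(\Phi(T,0))\le r_e(\Phi(T,0))$, and since $r_e\le r$ always holds for the spectral and essential spectral radii, this would force $r(\Phi(T,0))=r_e(\Phi(T,0))$, i.e. $s(\mathscr L)=\theta_M$ and $\theta_M$ is attained on the essential spectrum.

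Next I would exploit the hypothesis that $s(\mathscr L)$ is the \emph{principal} eigenvalue, so there is $\phi\in\mathbb{X}^m_T$ with $\phi\gg 0$ in $\bqq$ and $\mathscr L[\phi]=s(\mathscr L)\phi$. The key point is to compare $\phi$ with the eigenfunction structure of the pointwise operators $\mathscr P_x$. Pick $\bar x\in\boo$ with $\theta(\bar x)=\theta_M$. Restricting the eigenvalue equation $\mathscr L[\phi]=\theta_M\phi$ to $x=\bar x$ gives
\[
d_{\bar x}\!\int_\oo J_i(\bar x,y)\phi_i(y,t)\dy+[\mathscr P_{\bar x}\phi(\bar x,\cdot)]_i(t)=\theta_M\,\phi_i(\bar x,t),\qquad i\in\sss,
\]
so that $(\theta_M I-\mathscr P_{\bar x})\phi(\bar x,\cdot)=\big(d_i\int_\oo J_i(\bar x,y)\phi_i(y,t)\dy\big)_i\gg 0$ componentwise because $\phi\gg0$ and $J_i(\bar x,\bar x)>0$. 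On the other hand $\theta_M=\theta(\bar x)=s(\mathscr P_{\bar x})$ is the spectral bound of $\mathscr P_{\bar x}$, with a nonnegative left eigenfunction (adjoint eigenfunction) $\eta\ge 0$, $\eta\not\equiv0$, for the eigenvalue $\theta_M$; pairing the displayed identity against $\eta$ over $[0,T]$ annihilates the left-hand operator term and yields $0=\int_0^T\eta\cdot\big(d_i\int_\oo J_i(\bar x,y)\phi_i(y,t)\dy\big)\,\dt>0$, a contradiction. (If one prefers to avoid the adjoint, the same contradiction follows from a comparison/maximum-principle argument: $\phi(\bar x,\cdot)$ would be a strict $T$-periodic supersolution of $\mathscr P_{\bar x}$ at level $\theta_M$, forcing $s(\mathscr P_{\bar x})<\theta_M$ by a one-point analogue of Lemma~\ref{le2.2}(2).)

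I expect the main obstacle to be the bookkeeping around the pointwise operators $\mathscr P_x$ and their adjoints when $L(\bar x,\cdot)$ is merely cooperative rather than irreducible: the left eigenfunction $\eta$ need only be nonnegative and supported on a subset of indices $\tilde{\mathbb S}$, so one must make sure the positivity $\int_0^T\eta\cdot(\,\cdot\,)\,\dt>0$ genuinely survives — this uses that for $i\in\tilde{\mathbb S}$ one has $\int_\oo J_i(\bar x,y)\phi_i(y,t)\dy>0$ (true since $\phi_i\gg0$ for every $i$), so the pairing is strictly positive on $\tilde{\mathbb S}$ alone. A cleaner route, which I would actually adopt, is to bypass $\bar x$-pointwise adjoints entirely: since $\phi\gg0$ solves $\mathscr L[\phi]=s(\mathscr L)\phi$ with $\phi(x,0)=\phi(x,T)$, the map $\mathscr P={\rm e}^{-s(\mathscr L)T}\Phi(T,0)$ has $\phi(\cdot,0)$ as a strongly positive fixed point, whence (by irreducibility/strong positivity of $\Phi(nT,0)$ from Lemma~\ref{lem:SP}, or directly) $r(\mathscr P)=1$ is attained by a strongly positive eigenvector; but a compact perturbation argument as in Lemma~\ref{lem:A1}(1) shows the essential radius $r_e(\mathscr P)<1$ unless the eigenvector lies in the essential part, and strong positivity excludes that. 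Thus $r(\Phi(T,0))>r_e(\Phi(T,0))$, i.e. $s(\mathscr L)>\theta_M$. I would present the adjoint-pairing version as the main line and remark on this alternative.
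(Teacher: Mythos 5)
Your main argument is correct and is essentially the paper's own proof: the paper multiplies the eigen-equation for $\phi$ at $\bar x$ by the nonnegative $T$-periodic solution $\psi$ of the adjoint system $\psi_{it}+\sum_{k}\ell_{ki}(\bar x,t)\psi_k=\theta_M\psi_i$, multiplies the $\psi$-equation by $\phi_i(\bar x,\cdot)$, integrates over $[0,T]$ and subtracts, which is exactly your adjoint pairing; the only cosmetic difference is that the paper runs the identity directly and reads off $s({\mathscr L})-\theta_M>0$ from $\sum_i d_i\int_0^T\psi_i(t)\int_\oo J_i(\bar x,y)\phi_i(y,t)\,{\rm d}y\,{\rm d}t>0$, rather than first reducing to $s({\mathscr L})=\theta_M$ by contradiction. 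Your remark that strict positivity survives even if the left eigenfunction is supported only on some components (because $\phi\gg0$ makes every nonlocal term strictly positive) is precisely the point the paper exploits. One caution: the ``cleaner route'' sketched at the end is not sound as stated — Lemma \ref{lem:SP} requires {\bf(L2)}, which is not assumed in this lemma, and the assertion that a strongly positive eigenvector at $r({\mathscr P})=1$ forces $r_e({\mathscr P})<1$ is exactly what the lemma asserts, not a consequence of strong positivity alone — so the adjoint-pairing argument should indeed be the proof, as you conclude.
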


\begin{proof} Let $\phi$ be the principal eigenvector of ${\mathscr L}$ corresponding to $s({\mathscr L}) $, that is,
	\begin{equation}\label{equ:L:phi}
		d_i\dd\int_\oo J_i(x,y)\phi_i(y,t)\dy+\sum_{k=1}^m \ell_{ik}(x,t)\phi_k(x,t)-\phi_{it}(x,t)=s({\mathscr L}) \phi_i (x,t).
	\end{equation}
Choose $\bar{x} \in \boo$ such that $\theta(\bar{x})=\theta_M$. Let $\psi$ be the nonnegative nontrivial eigenfunction of
 \begin{equation}\label{equ:theta_m:psi}
 \sum_{k=1}^{m}\ell_{ki}(\bar{x},t) \psi_k (t)+\psi_{it} (t)= \theta_M \psi_i(t).
	\end{equation}
Multiplying \eqref{equ:L:phi} by $\psi_i$ and \eqref{equ:theta_m:psi} by $\phi_i$, respectively, then integrating the results over $[0,T]$, and making a difference at $\bar{x}$ and summing all $m$ equations together, we obtain that
	\[\sum_{i=1}^{m}d_i\int_{0}^{T}\psi_i(t)\dd\int_\oo J_i(\bar{x},y)\phi_i(y,t)\dy \mathrm{d}t
	=\sum_{i=1}^{m}(s({\mathscr L})-\theta_M)\int_{0}^{T}\phi_i (\bar{x},t) \psi_i(t) \mathrm{d}t.\]
This implies that $s({\mathscr L})>\theta_M$.
\end{proof}\www

Let $\tilde{d}_1$, $\cdots$, $\tilde{d}_m$ be nonnegative numbers. Choose a cooperative $T$-periodic matrix-valued function $\tilde{G}(x,t)=(\tilde{\ell}_{ik}(x,t))_{m \times m}$ and a series of $T$-periodic nonnegative function $\tilde{J}_i(x,y)$.
Define an operator
  \[\tilde{\mathscr L}=(\tilde{\mathscr L}_1,\cdots,\tilde{\mathscr L}_m)\]
with
 \[\tilde{\mathscr L}_i[\phi]=\tilde{d}_i\dd\int_\oo \tilde{J}_i(x,y,t)\phi_i(y,t)\dy+\sum_{k=1}^n\tilde{\ell}_{ik}(x,t)\phi_k(x,t)
 -\phi_{it}(x,t),\;\;\;\phi\in\mathbb{X}^m_T.\]
Then we have the following continuity result.

\begin{lem}\label{lem:conti}  Assume {\bf (L1)} holds. Then $s({\mathscr L})$ is continuous with respect to $L$, $J_i$ and $d_i$, that is, for any $\ep>0$, there exists $\delta>0$ such that $|s({\mathscr L})-s(\tilde{\mathscr L})|\le\ep$ whenever $\|\ell_{ik}-\tilde{\ell}_{ik}\|_{C(\bqq)}\le\delta$, $\|J_i-\tilde{J}_i\|_{C(\boo\times\boo)}\le\delta$ and $|d_i-\tilde{d}_i|\le\delta$ for all $i,k\in\mathbb{S}$.
\end{lem}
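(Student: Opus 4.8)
The plan is to reduce the statement to the scalar-type continuity of spectral bounds by exploiting the characterization $s(\tilde{\mathscr L})=\frac{\ln r(\tilde\Phi(T,0))}{T}$ from Lemma \ref{lem:A1}(3), together with the two-sided estimates of Lemma \ref{le2.2}. First I would fix $\ep>0$ and invoke Lemma \ref{lA.3} or, more simply, the approximation philosophy: since $s({\mathscr L})>\theta_M$ may fail, one cannot assume a principal eigenfunction exists for ${\mathscr L}$ itself. So instead I would work at the level of the evolution families. Writing $\Phi$ and $\tilde\Phi$ for the $T$-periodic evolution families associated to ${\mathscr L}$ and $\tilde{\mathscr L}$, the Duhamel/constant-variation formula expresses $\tilde\Phi(T,0)-\Phi(T,0)$ as an integral involving the differences $\tilde d_i\tilde J_i-d_iJ_i$ and $\tilde L-L$; a Gronwall estimate then shows $\|\tilde\Phi(T,0)-\Phi(T,0)\|_{{\cal L}(C(\boo,\R^m))}\to 0$ as $\delta\to0$, uniformly. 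The key subtlety is that $r(\cdot)$ is not continuous in general, but $s({\mathscr L})=\frac{\ln r(\Phi(T,0))}{T}$ and $r_e$ is computable via $\max_{\boo}r(\Gamma_x(T,0))$, which is continuous in the coefficients by the finite-dimensional Floquet theory and the compactness of $\boo$.

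The cleanest route avoiding spectral-radius continuity issues is via Lemma \ref{le2.2} applied to perturbed operators. Fix $\ep>0$. I would first show that $s({\mathscr L}^\pm_\delta)\to s({\mathscr L})$ as $\delta\to0$, where ${\mathscr L}^\pm_\delta$ is ${\mathscr L}$ with $L$ replaced by $L\pm\delta' \mathbf{1}$ (all entries shifted) and likewise $d_i,J_i$ perturbed; this monotone perturbation is transparent because adding a constant $c$ to all diagonal entries shifts $s$ by exactly $c$, and perturbing $d_iJ_i$ upward increases $s$. Concretely, if $\|\ell_{ik}-\tilde\ell_{ik}\|\le\delta$, $\|J_i-\tilde J_i\|\le\delta$, $|d_i-\tilde d_i|\le\delta$, then pointwise
\[
\tilde{\mathscr L}_i[\phi]\ \le\ d_i\int_\oo J_i(x,y)\phi_i\,\dy+\sum_k\ell_{ik}\phi_k-\phi_{it}+C\delta\,\|\phi\|_\yy\,\mathbf{1}
\]
for $\phi\gg0$, with $C$ depending only on $|\oo|$, $\sup d_i$, $\sup\|J_i\|$. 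Comparing $\tilde{\mathscr L}$ with ${\mathscr L}$ plus a suitable constant shift dominating the $C\delta\|\phi\|_\yy$ term (using that for a strongly positive $T$-periodic $\phi$ one has $\|\phi\|_\yy\le c_\phi\min\phi$) gives, via Lemma \ref{le2.2}(1)--(2), $s(\tilde{\mathscr L})\le s({\mathscr L})+C'\delta$ and symmetrically $s(\tilde{\mathscr L})\ge s({\mathscr L})-C'\delta$. This needs a genuine strongly positive $T$-periodic test function, which one gets by approximation: pick $\psi$ a strongly positive $T$-periodic eigenfunction of the perturbation ${\mathscr L}+\eta\mathbf{1}$-type operator provided by Lemma \ref{lem:2.1} (its hypotheses, in particular {\bf(H1)}, can be forced after an appropriate regularizing perturbation of $J_i$), and note $s$ of that operator is within $O(\eta)$ of $s({\mathscr L})$.

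The main obstacle, as I see it, is exactly the non-existence of a principal eigenfunction for the bare operator ${\mathscr L}$: one cannot simply test Lemma \ref{le2.2} against an eigenfunction of ${\mathscr L}$. The remedy is the two-layer approximation — first perturb so that {\bf(H1)} holds and a strongly positive eigenfunction exists (controlling the error by Lemma \ref{lem:conti} applied in the already-known regularity regime, or by a direct monotone sandwich), then use that eigenfunction as the comparison function for $\tilde{\mathscr L}$. One must track that all constants $C,C'$ are uniform over the relevant coefficient neighborhoods, which follows from the a priori $L^\yy$-bounds on the evolution families and from $\min_{\boo}j_i(x)>0$ (so that the Neumann-type $d^*_i$ stays bounded away from degeneracy). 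The remaining steps — passing from the evolution-family estimate to the $s$-estimate via $s=\frac1T\ln r(\Phi(T,0))$ and Lemma \ref{lem:A1}(3), and assembling the $\pm C'\delta$ bounds into $|s({\mathscr L})-s(\tilde{\mathscr L})|\le\ep$ for $\delta$ small — are then routine.
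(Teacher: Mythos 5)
Your strategy is genuinely different from the paper's: you bound $s(\tilde{\mathscr L})$ from both sides through Lemma \ref{le2.2}, testing $\tilde{\mathscr L}$ against strongly positive $T$-periodic eigenfunctions of perturbed (control) operators, whereas the paper works directly with $r(\Phi(T,0))$ and splits the argument into an increasing perturbation (monotonicity of the spectral radius plus Kato-type upper semicontinuity), a decreasing perturbation (Kato's perturbation theory for the isolated eigenvalue when $r>r_e$, and continuity of $r_e(\Phi(T,0))=\max_{x}r(\Gamma_x(T,0))$ when $r=r_e$), and a $\max/\min$ sandwich for the general case. With the quantifiers ordered as you indicate (fix the control parameter $\eta$ first, so that $\lm_p$ of the control operator is within $\ep/2$ of $s({\mathscr L})$ via the diagonal-shift sandwich, and only then shrink $\delta$ so that $C\delta\,\|\phi^\eta\|_{\yy}/\min\phi^\eta\le\ep/2$), your route is workable --- but only under the extra hypothesis {\bf(L2)}.

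That is the genuine gap: Lemma \ref{lem:conti} assumes only {\bf(L1)}. Your absorption step $C\delta\|\phi\|_{\yy}\le C\delta c_\phi\,\phi_i$ requires a test function all of whose components are bounded below by a positive constant, i.e. a strongly positive periodic eigenfunction of the perturbed operator. This is exactly what Lemma \ref{lem:2.1} (through Lemma \ref{lem:SP}) provides, but both need the irreducibility condition {\bf(L2)}; under {\bf(L1)} alone, Lemma \ref{lA.3} yields only a strictly positive eigenfunction, which may vanish identically in some components (e.g. a decoupled system whose scalar blocks have different spectral bounds), and then the error term in the $i$-th inequality cannot be absorbed into $\phi_i\equiv 0$. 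Adding $\eta$ to the off-diagonal entries to force irreducibility does not obviously repair this, because the resulting extra error is of size $\eta(m-1)\|\phi^\eta\|_{\yy}/\min\phi^\eta$, and the ratio may blow up as $\eta\to0$, so the two smallness parameters can no longer be decoupled; the paper's proof avoids eigenfunctions altogether in the delicate direction, which is why it goes through under {\bf(L1)}. Two smaller points: {\bf(H1)} is forced by perturbing the diagonal of $L$ (since $\theta(x)$ depends only on $L$), not by regularizing $J_i$; and your aside about ``controlling the error by Lemma \ref{lem:conti}'' is circular as phrased --- only the monotone sandwich $\ud L^\ep\le L\le \ud L^\ep+3\ep I$, which you also mention, legitimately relates the perturbed principal eigenvalues to $s({\mathscr L})$ without invoking the lemma being proved.
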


\begin{proof} In view of Lemma \ref{lem:A1}(3), $s({\mathscr L})=\frac{\ln r(\Phi(T,0))}{T}$. We only prove the continuity of $r(\Phi(T,0))$ with respect to $L$, and the other conclusions can be derived similarly. Define $\tilde{\Phi}$ the same as $\Phi$ with $L$ replaced by $\tilde{L}$. Let $\ep>0$ be any given number. We proceed our proof into three steps.
	
{\it Step 1.} Consider the case of $\ell_{ik}(x,t) \le\tilde{\ell}_{ik} (x,t)$ in $\bqq$ for all $i,k\in\mathbb{S}$. According to \cite[Theorem IV.3.1 and Remark IV.3.2]{Kato1976Book} and \cite[Theorem 1.1]{B1991AMB}, there exists $\delta_1>0$ such that
	\bes r(\Phi(T,0)) \le r (\tilde{\Phi}(T,0))\le\ep+r(\Phi(T,0)) \label{equ:conti:1}\ees
whenever $\ell_{ik}(x,t) \le\tilde{\ell}_{ik}(x,t)\le\ell_{ik}(x,t)+\delta_1$ for all $i,k\in\mathbb{S}$ and $(x,t)\in\bqq$.
	
{\it Step 2.} Consider the case of $\ell_{ik}(x,t)\ge\tilde{\ell}_{ik}(x,t)$ in $\bqq$ for all $i,k\in\mathbb{S}$. In the case of $r(\Phi(T,0)) >  r_e(\Phi(T,0))$, then $r(\Phi(T,0))$ is an isolated eigenvalue of $r(\Phi(T,0))$ due to the generalized Krein-Rutman theorem (see, e.g., \cite[Corollary 2.2]{N1981FPT} and \cite[Lemma 2.4]{Zhang24}). By the perturbation theory of isolated eigenvalue (see, e.g., \cite[Section IV.3.5]{Kato1976Book}), there exists $\delta_2>0$ such that
	\bes \vert r(\tilde{\Phi}(T,0))-r(\Phi(T,0)) \vert\le\ep
	\label{equ:conti:2}\ees
whenever $\|\tilde{\ell}_{ik}-\ell_{ik}\|_{C(\bqq)}\le\delta_2$ for all $i,k\in\mathbb{S}$.
	
In the case of $r(\Phi(T,0))=r_e(\Phi(T,0))$, by Lemma \ref{lem:A1}(2), there exists $\delta_3>0$ such that $\vert r_e(\tilde{\Phi}(T,0))-r_e(\Phi(T,0)) \vert\le\ep$ whenever $\|\tilde{\ell}_{ik}-\ell_{ik}\|_{C(\bqq)}\le\delta_3$  for all $i,k\in\mathbb{S}$. According to \cite[Theorem 1.1]{B1991AMB}, $r(\tilde{\Phi}(T,0))\le r(\Phi(T,0))$.	Notice that $r(\tilde{\Phi}(T,0)) \ge r_e(\tilde{\Phi}(T,0))$, we then have
	\bes 0 \ge r(\tilde{\Phi}(T,0))-r(\Phi(T,0)) \ge r_e(\tilde{\Phi}(T,0))-r_e(\Phi(T,0))  \ge -\ep
	\label{equ:conti:3}\ees
whenever $\|\tilde{\ell}_{ik}-\ell_{ik}\|_{C(\bqq)}\le\delta_3$ for all $i,k\in\mathbb{S}$.
	
{\it Step 3.} Prove the conclusion in general case. Let $\delta=\min(\delta_1,\delta_2,\delta_3)>0$. Assume that $\ell_{ik}(x,t)-\delta\le\tilde{\ell}_{ik}(x,t)\le \ell_{ik}(x,t)+\delta$ in $\bqq$ for all $i,k\in\mathbb{S}$. Define
 \[\overline{\ell}_{ik}(x,t)= \max(\ell_{ik}(x,t),\tilde{\ell}_{ik}(x,t)),\;\;\text{ and }\; \underline{\ell}_{ik}(x,t)= \min(\ell_{ik}(x,t),\tilde{\ell}_{ik}(x,t))\]
for $ i,k\in\mathbb{S}$, $(x,t)\in\bqq$. It is easy to verify that
 \[\ell_{ik}(x,t)\le\overline{\ell}_{ik}(x,t)\le \ell_{ik}(x,t)+\delta\;\;\;\mbox{in}\;\; \bqq,\;\forall\,i,k\in\mathbb{S},\]
and
 \[\ell_{ik}(x,t)-\delta\le\underline{\ell}_{ik}(x,t)\le \ell_{ik}(x,t)\;\;\;\mbox{in}\;\; \bqq,\; \forall\,i,k\in\mathbb{S}.\]
	
Define $\overline{\Phi}$ and $\underline{\Phi}$ the same as $\Phi$ with $L(x,t)$ replaced by $\overline{L}(x,t)=(\overline{\ell}_{ik}(x,t))_{m\times m}$ and $\underline{L}(x,t)=(\underline{\ell}_{ik}(x,t))_{m\times m}$, respectively. Then
 \bess
 r(\tilde{\Phi}(T,0))-\ep\le r(\overline{\Phi}(T,0))-\ep
	\le r(\Phi(T,0))\le r(\underline{\Phi}(T,0)) +\ep
	\le r(\tilde{\Phi}(T,0)) +\ep,\eess
which completes the proof.
\end{proof}

\section{Approximation and characterization of the generalized principal eigenvalue}\setcounter{equation}{0}
{\setlength\arraycolsep{2pt}

We are now in a position to prove our first main result.\www

\begin{proof}[Proof of Theorem $\ref{t2.1}$] We first construct the upper and lower control systems with principal eigenvalues, and then prove that the principal eigenvalues of the upper and lower control systems have the same limit to obtain the generalized principal eigenvalue.

Notice that $\theta\in C(\boo)$. Set
 \[\theta_M=\max_{\boo}\theta(x).\]

{\it Step 1}: The construction of the lower control matrix $\ud L^\ep(x,t)$ of $L(x,t)$.
For the given $0<\ep\ll 1$, we define
 \[\Sigma_\ep=\big\{x\in\boo:\,\theta(x)\ge\theta_M-\ep\big\}.\]
Then $\Sigma_\ep$ is a closed subset of $\ol\oo$, and $\theta(x)=\theta_M-\ep$ on $\partial\Sigma_\ep$. For any fixed $t\in[0,T]$, we define
 \bess
\ud \ell_{ik}^\ep(x,t)=\ell_{ik}(x,t),\;\;x\in\boo\;\;\;\mbox{for}\;\;i\not=k,\eess
and
 \bess
 \ud \ell_{ii}^\ep(x,t)=\left\{\begin{array}{ll}
 \ell_{ii}(x,t)-2\ep+\theta_M-\theta(x),\; &x\in\Sigma_\ep,\\[1mm]
 \ell_{ii}(x,t)-\ep,\; &x\in\boo\setminus\Sigma_\ep. \end{array}\right.\eess
Then $\ud \ell_{ii}^\ep(x,t)\le \ell_{ii}(x,t)-\ep$ in $\ol Q_T$. Set $\ud L^\ep(x,t)=(\ud \ell_{ik}^\ep(x,t))_{m\times m}$, i.e., for each $t\in[0,T]$,
 \bess
 \ud L^\ep(x,t)=\left\{\begin{array}{ll}
 L(x,t)+[\theta_M-2\ep-\theta(x)]I,\; &x\in\Sigma_\ep,\\[1mm]
  L(x,t)-\ep I,\; &x\in\boo\setminus\Sigma_\ep. \end{array}\right.
   \eess
Then $\ud L^\ep(x,t)$ is a continuous cooperative matrix-valued function, and $\ud L^\ep(x_0,t_0)$ is irreducible. For any $x\in\boo$, define an operator
$\ud{\mathscr P}_{x}^{\ep}$ by
 \[[\ud{\mathscr P}_{x}^{\ep}\phi](t):=-\frac{{\rm d}\phi(t)}{{\rm d}t}
  +\ud L^\ep(x,t)\phi(t), \;\; \phi \in \mathbb{Y}^m_T.\]
An easy computation yields that
 \[{\ud\theta_\ep(x)}:=s(\ud{\mathscr P}_{x}^{\ep})
=\begin{cases}
	\theta(x)+\theta_M-2\ep-\theta(x)=\theta_M-2\ep,& x\in\Sigma_\ep,\\
	\theta(x)-\ep<\theta_M-2\ep, & x\in\boo\setminus\Sigma_\ep.
\end{cases}\]
Obviously,
 \[\Big(\max_{\ol\oo}\ud\theta_\ep(x)-\ud\theta_\ep(x)\Big)^{-1}\not\in L^1(\oo).\]
According to Lemma \ref{lem:2.1}, $\ud{\mathscr L}^\ep$ has the principal eigenvalue  $\lm_p(\ud{\mathscr L}^\ep)$.

{\it Step 2}: The construction of the upper control matrix $\ol L^\ep(x,t)$ of $L(x,t)$. Similar to the above, for any fixed $t\in[0,T]$, we define
 \bess
 \bar \ell_{ik}^\ep(x,t)=\ell_{ik}(x,t),\;\;x\in\boo\;\;\;\mbox{for}\;\;i\not=k,\eess
and
 \bess
\bar \ell_{ii}^\ep(x,t)=\left\{\begin{array}{ll}
	\ell_{ii}(x,t)+\ep+\theta_M-\theta(x),\; &x\in\Sigma_\ep,\\[1mm]
	\ell_{ii}(x,t)+2\ep,\; &x\in\boo\setminus\Sigma_\ep. \end{array}\right.\eess
Then $\bar \ell_{ii}^\ep(x,t)\ge \ell_{ii}(x,t)+\ep$ in $\ol Q_T$. Set $\ol L^\ep(x,t)=(\bar \ell_{ik}^\ep(x,t))_{m\times m}$. Then
$\ol L^\ep(x,t)$ is a continuous cooperative matrix-valued function, and $\ol L^\ep(x_0,t_0)$ is irreducible. Similar to the above, for any $x\in\boo$, we define $\ol{\mathscr P}_{x}^{\ep}$ by
  \[[\ol{\mathscr P}_{x}^{\ep}\phi](t):=-\frac{{\rm d}\phi(t)}{{\rm d}t}+\ol L^\ep(x,t)\phi(t), \;\; \phi\in \mathbb{Y}^m_T.\]
Then
  \[\bar\theta_\ep(x):=s(\ol{\mathscr P}_{x}^{\ep})
 =\begin{cases}
	\theta(x)+\ep+\theta_M-\theta(x)=\ep+\theta_M,& x\in\Sigma_\ep,\\
	\theta(x)+2\ep<\ep+\theta_M, & x\in\boo\setminus\Sigma_\ep.
  \end{cases}\]
 Certainly,
  \[\Big(\max_{\ol\oo}\bar\theta_\ep(x)-\bar\theta_\ep(x)\Big)^{-1}\not\in L^1(\oo).\]
By Lemma \ref{lem:2.1}, $\ol{\mathscr L}^\ep$ has the principal eigenvalue  $\lm_p(\ud{\mathscr L}^\ep)$.

{\it Step 3}: From the constructions of $\ud L^\ep$ and $\ol L^\ep$ we easily see
that
 \[\ol L^\ep(x,t)= \ud L^\ep(x,t)+3\ep I,\]
and $\lm_p(\ud{\mathscr L}^\ep)$ and $\lm_p(\ol{\mathscr L}^\ep)$ are strictly decreasing and increasing in $\ep$, respectively. Therefore, the limits $\lim_{\ep\to 0}\lm_p(\ud{\mathscr L}^\ep)$ and $\lim_{\ep\to 0}\lm_p(\ol{\mathscr L}^\ep)$ exist, and they are equal, denoted by $\lm({\mathscr L})$, the generalized principal eigenvalue of ${\mathscr L}$.

Notice that for any given $\phi \in\mathbb{P}^m$, it is easy to see that
  \[\sup_{\lambda\in\mathbb{R}}\{{\ud{\mathscr L}}^{\ep}\phi\ge\lambda\phi\}\le \inf_{\lambda\in\mathbb{R}}\{{\ud{\mathscr L}}^{\ep}\phi\le\lambda \phi\}.\]
Since $\lm_p(\ud{\mathscr L}^\ep)$ is the principal eigenvalue of $\ud{\mathscr L}^{\ep}$ corresponding to an eigenfunction in $\mathbb{P}^m$, we have
  \[\lm_p(\ud{\mathscr L}^\ep)=\sup_{\phi\in\mathbb{P}^m}\sup_{\lambda\in\mathbb{R}}
  \{{\ud{\mathscr L}}^{\ep}\phi\ge\lambda\phi\}
 = \inf_{\phi\in\mathbb{P}^m}\inf_{\lambda\in\mathbb{R}}\{{\ud{\mathscr L}}^{\ep}
 \phi\le\lambda \phi\}=s(\ud{\mathscr L}^\ep).\]
Similarly,
  \[\lm_p(\ol{\mathscr L}^\ep)=\sup_{\phi \in\mathbb{P}^m}\sup_{\lambda\in\mathbb{R}}
  \{{\ol{\mathscr L}}^{\ep}\phi\ge\lambda\phi\}
  =\inf_{\phi\in\mathbb{P}^m}\inf_{\lambda\in\mathbb{R}}\{{\ol{\mathscr L}}^{\ep}\phi\le\lambda\phi\}=s(\ol{\mathscr L}^\ep).\]
Letting $\ep\to 0^+$, we conclude that \qq{2b.1} holds.

Clearly, $\lm({\mathscr L})\ge\theta_M$. If $\lm({\mathscr L})=\theta_M$, then $\lm({\mathscr L}) \in \sigma({\mathscr L})$. If $\lm({\mathscr L})>\theta_M$, then $\lm({\mathscr L})$ is an eigenvalue of ${\mathscr L}$ corresponding to a strongly positive eigenfunction, which means that $\lm({\mathscr L}) \in \sigma({\mathscr L})$. According to Lemma \ref{lem:conti}, $\lm({\mathscr L})$ is continuous in $L(x,t)$. The proof is complete.
\end{proof}

\begin{remark}	In Theorem $\ref{t2.1}$, it is shown that  $\lm({\mathscr L})=s({\mathscr L})$. Thanks to Lemma $\ref{lem:A1}$, it is known that $s({\mathscr L})=\frac{\ln r(\Phi(T,0))}{T}$. This implies that the generalized principal eigenvalue of ${\mathscr L}$ is also the exponential growth bound of $\Phi$ $($see, e.g., {\rm\cite[Proposition A.2]{Thieme2009SIAP})}.
\end{remark}

\section{Threshold dynamics for cooperative systems \qq{z1.3}}\lbl{Ses4}\setcounter{equation}{0}

As the applications of Theorem \ref{t2.1}, in this section we study the threshold dynamics for cooperative systems \qq{z1.3}. Its corresponding time-periodic problem is
 \bes\begin{cases}\label{3.1}
U_{it}=d_i\dd\int_\oo J_i(x,y) U_i(y,t)\dy-d_i^*(x)U_i+f_i(x,t,U),\;\;&(x,t)\in Q_T,\\[2mm]
  U_i(x,0)=U_i(x,T),&x\in\boo,\\
i=1,\cdots,m.
 \end{cases}\ees

Throughout this section, $J(x,y)$ and $J_i(x,y)$ satisfy the condition {\bf(J)}, $d^*(x)$ and $d_i^*(x)$ are defined by the manner {\bf(D)}.

\subsection{Maximum principle, positivity and uniqueness of solutions of \qq{z1.3} and \qq{3.1}}

We first investigate the maximum principle of \qq{z1.3}.\zzz

\begin{lem}\lbl{le3.1} Assume that $p\in L^\yy(Q_T)$ and $u\in\mathbb{Z}^1$ satisfies
 \bess\begin{cases}
 u_t\ge\dd\int_\oo J(x,y)u(y,t)\dy+p(x,t)u,\;\;& x\in\boo,\;0<t\le T,\\[1mm]
 u(x,0)\ge 0,&x\in\boo.
 \end{cases}\eess
 Then the following statements are valid:\vspace{-2mm}
 \begin{itemize}
 \item[\rm (i)] $u\ge 0$ in $Q_T$.\vvv
 \item[\rm (ii)] If $ u(x_0,0)>0$ for some $x_0\in\boo$, then $u(x_0,t)>0$ in $[0,T]$.\vvv
 \item[\rm (iii)] If there exists $\Omega_1\subset\oo$ with $|\Omega_1|>0$ such that $u(x,0)>0$ in $\Omega_1$, then $\inf_{x\in \Omega}u(x,t)>0$ for any $t \in (0,T]$. In particular, if $u\in\mathbb{X}^1$ and $u(x_0,0)>0$ for some $x_0\in\boo$, then $\inf_{x\in \Omega}u(x,t)>0$ for any $t\in 0,T]$.\www
 \end{itemize}
\end{lem}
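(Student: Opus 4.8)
The plan is to pass to the integral (Duhamel / integrating‑factor) form of the one‑sided inequality and then run a Gronwall argument for (i), a scalar ODE comparison for (ii), and an iterated ``infinite‑speed spreading'' argument for (iii). For (i): since $u(x,\cdot)\in C^1([0,T])$ and $\int_\Omega J(x,y)u(y,t)\,dy$ is bounded by $\|u\|_\infty$, I would multiply the inequality by $e^{-\int_0^t p(x,s)\,ds}$ and integrate in $t$ to obtain, for a.e.\ $x\in\Omega$ and all $t\in[0,T]$,
\[
u(x,t)\ \ge\ e^{\int_0^t p(x,s)\,ds}u(x,0)+\int_0^t e^{\int_\tau^t p(x,s)\,ds}\int_\Omega J(x,y)u(y,\tau)\,dy\,d\tau .
\]
With $\mu(\tau)=\mathrm{ess\,inf}_{y\in\Omega}u(y,\tau)$ and $m(\tau)=\min\{\mu(\tau),0\}\le 0$, the elementary bounds $\int_\Omega J(x,y)\,dy\le 1$ and $u(x,0)\ge 0$ give $u(x,t)\ge e^{T\|p\|_\infty}\int_0^t m(\tau)\,d\tau$, hence $m(t)\ge e^{T\|p\|_\infty}\int_0^t m(\tau)\,d\tau$, and Gronwall forces $m\equiv 0$, i.e.\ $u\ge 0$ a.e.\ in $Q_T$; once the integral term above is nonnegative, the same identity yields $u(x,t)\ge e^{\int_0^t p(x,s)ds}u(x,0)\ge 0$ for every $x\in\boo$. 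The one point needing care is that $m$ be measurable: the inequality itself produces the one‑sided bound $u_t\ge -\|u\|_\infty(1+\|p\|_\infty)$, so $t\mapsto u(x,t)+\|u\|_\infty(1+\|p\|_\infty)t$ is nondecreasing for each $x$, hence so is $t\mapsto \mu(t)+\|u\|_\infty(1+\|p\|_\infty)t$, and $m$ has bounded variation.

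For (ii): once $u\ge 0$ we have $\int_\Omega J(x_0,y)u(y,t)\,dy\ge 0$, so $u_t(x_0,t)\ge -\|p\|_\infty u(x_0,t)$ and the integrating factor gives $u(x_0,t)\ge e^{\int_0^t p(x_0,s)ds}u(x_0,0)>0$ on $[0,T]$. For (iii) I would first secure a quantitative foothold: since $u(\cdot,0)>0$ on $\Omega_1$ with $|\Omega_1|>0$, there are $c_1>0$ and a positive‑measure set $E_0\subset\Omega_1$ with $u(\cdot,0)\ge c_1$ on $E_0$, whence by the argument of (ii) $u\ge c_0 c_1$ on $E_0\times[0,T]$ with $c_0=e^{-T\|p\|_\infty}$. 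Condition {\bf(J)} and compactness of $\boo$ supply $r_0>0$, $c_*>0$ with $J(x,y)\ge c_*$ for $x,y\in\boo$, $|x-y|<r_0$, so the integrating‑factor identity (with $u\ge 0$) yields the spreading estimate
\[
u(x,t)\ \ge\ c_0 c_*\int_s^t\int_{B(x,r_0)\cap\Omega}u(y,\tau)\,dy\,d\tau ,\qquad 0\le s<t\le T .
\]
Iterating this over a partition $0=s_0<s_1<\cdots<s_\ell=t$, starting from $E_0$, the set on which $u(\cdot,s_j)$ is bounded below by a positive constant thickens at each step — using the Lebesgue density theorem to pass from the merely positive‑measure $E_0$ to a genuine ball after the first step, and the measure‑density bound $|B(x,r)\cap\Omega|\ge c_\Omega r^N$ coming from the smooth boundary so that spreading is not obstructed near $\partial\Omega$. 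Because $\boo$ is compact and connected, a number of steps $\ell=\ell(\Omega,r_0)$ suffices to cover $\boo$, giving $\inf_\Omega u(\cdot,t)>0$ for each fixed $t>0$ (the bound degenerates like $t^\ell$ as $t\to 0^+$, which is harmless for fixed $t$). The ``in particular'' case then follows: for $u\in\mathbb{X}^1$, continuity turns $u(x_0,0)>0$ into $u(\cdot,0)>0$ on a neighbourhood of $x_0$ meeting $\Omega$ in positive measure, and the previous argument applies.

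I expect the main obstacle to be the bookkeeping in (iii): organizing the iteration so the positivity sets grow in a controlled way, handling what happens near $\partial\Omega$, and tracking the constants so the final lower bound is strictly positive for each fixed $t>0$. In (i) the only real subtlety is the measurability of $t\mapsto\mathrm{ess\,inf}_x u(x,t)$, circumvented by the one‑sided time‑regularity forced by the inequality — which is precisely why the weak hypothesis $u\in\mathbb{Z}^1$, with no two‑sided control on $u_t$, is enough.
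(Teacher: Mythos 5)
Your proposal is correct. Parts (i) and (ii) follow essentially the same route as the paper: the paper dismisses (i) as standard (your integrating-factor/Gronwall argument, with the monotonicity trick to make $t\mapsto\operatorname{ess\,inf}_x u(x,t)$ measurable, is a perfectly good way to fill it in), and (ii) is the same scalar ODE comparison the paper performs after multiplying by $\mathrm{e}^{kt}$.

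Part (iii) is where you genuinely diverge. The paper's device is to convolve once more with the kernel: it introduces $w(x,t)=\int_{\Omega}J(x,y)u(y,t)\,\mathrm{d}y$, which is continuous in $(x,t)$ even though $u$ is only $L^{\infty}$ in $x$, shows $w(\cdot,0)\not\equiv 0$, derives the same integral inequality for $w$, and then applies the continuous-case argument (a soft sliding-ball/contradiction argument at a first touching point, which implicitly uses connectedness of $\Omega$) to conclude $w>0$ on $\overline{\Omega}\times(0,T]$; the uniform lower bound for $u$ then comes for free from $u_t\ge w$ and compactness/continuity of $w$. You instead run a quantitative chain-of-balls iteration: a uniform kernel bound $J\ge c_*$ on the strip $|x-y|<r_0$, a foothold on a positive-measure set via (ii), and repeated use of the Duhamel inequality, with density points at the first step and the measure-density property $|B(x,r)\cap\Omega|\ge c_{\Omega}r^{N}$ thereafter, connectedness entering explicitly through the finite covering of $\overline{\Omega}$ by $r_0/4$-neighborhoods. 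Both arguments work; the paper's is shorter and softer (all continuity issues are transferred to $w$), while yours is more quantitative, yielding an explicit lower bound of order $t^{\ell}$ and making the role of the kernel's diagonal positivity and of $\partial\Omega$ transparent. The only point in your plan that genuinely needs the bookkeeping you anticipate is the induction step: to get a uniform measure lower bound on the set carrying the previous estimate, the ball $B(w,r_0/4)$ should be centered at a point $w$ of the \emph{previous-generation} set $G_{j-1}$ (so that $B(w,r_0/4)\cap\overline{\Omega}\subset G_j$ and $B(w,r_0/4)\subset B(x,r_0)$), rather than at an arbitrary point of the frontier of $G_j$; with that adjustment the iteration closes and your constants are strictly positive for each fixed $t\in(0,T]$, as claimed.
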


\begin{proof}(i) The proof of the conclusion $u\ge 0$ in $Q_T$ is standard we omit the details.

(ii) Take $k>0$ large enough such that $k+p(x,t)>0$ in $\ol Q_T$. Then the function $v(x,t)={\rm e}^{kt}u(x,t)$ satisfies $v\ge 0$, $v(x_0,0)>0$ and
 \bess
 v_t(x_0,t)\ge\dd\int_\oo J(x_0,y)v(y,t)\dy+[k+p(x_0,t)]v(x_0,t)\ge 0,\;\;\forall\,0<t\le T.\eess
It follows that $v(x_0, t)>0$, i.e., $u(x_0,t)>0$ in $[0,T]$.

(iii) Without loss of generality, we assume that $p(x,t)>0$, and otherwise, $p(x,t)$ can be replaced by $p(x,t)+k$ by the same arguments as (ii).

We first consider the special case that $u\in\mathbb{X}^1$ and $u(x_0,0)>0$ for some $x_0\in\boo$. As $z(t)=u(x_0,t)$ satisfies
 \[z'(t)\ge\dd\int_\oo J(x_0,y)u(y,t)\dy+p(x_0,t)z(t)\ge p(x_0,t)z(t), \;\;0<t\le T\]
and $z(0)>0$, it yields $z(t)>0$, i.e., $u(x_0,t)>0$ in $[0,T]$.

Suppose that there exists $(x',t')\in\boo\times(0,T]$ such that $u(x',t')=0$. As $u(\cdot,t')\in C(\boo)$ and $u(x_0,t')>0$, we can find a $\sigma>0$ and $x^*\in\partial(B_\sigma(x_0)\cap\oo)$, such that $u(\cdot,t')>0$ in $B_\sigma(x_0)\cap\oo$ and $u(x^*,t')=0$. Then there exists $\delta>0$ such that $u(\cdot,t')>0$ in $B_\delta(x^*)\cap B_\sigma(x_0)\cap\oo$. Since $u(x^*,\cdot)\in C^1([0,T])$, $u(x^*,\cdot)\ge 0$ in $[0,t']$ and $u(x^*,t')=0$, we obtain that
 \[0\ge u(x^*,t')-u(x^*,0)\geq\int_{0}^{t'}\int_\oo J(x^*,y)u(y,s)\dy\mathrm{d}s>0.\]
This contradiction implies that $u(x,t)>0$ for all $(x,t)\in\ol\Omega\times(0,T]$, and hence, $\inf_{x \in \Omega} u(x,t)>0$.

We finally consider the general case that there exists $\Omega_1\subset\oo$ with  $|\oo_1|>0$ such that $ u(x,0)>0$ in $\Omega_1$. Integrating the above equation over $(0,t)$, we have
\bes
u(x,t)-u(x,0)\geq \int_{0}^{t}\int_{\Omega}J(x,y)u(y,t)\mathrm{d}y\dt.
\lbl{4d.2}\ees
Write $w(x,t)=\int_{\Omega} J(x,y) u(y,t) \mathrm{d}y, (x,t)\in\ol\Omega\times (0,T]$, which is a continuous function in $\ol\Omega\times[0,T]$.
Multiplying \qq{4d.2} by $J(z,x)$ and integrating the results over $\Omega$, we obtain that (for convenience, we still use $x$ and $y$ to represent $z$ and $x$, respectively)
 \[w(x,t)-w(x,0)\geq\int_{\Omega}\int_{0}^{t}J(x,y)w(y,t)\mathrm{d}y=\int_{0}^{t} \int_{\Omega}J(x,y)w(y,t)\mathrm{d}y.\]
Noticing that $w(x,0)$ is continuous on $\ol \Omega$, which is nonzero as $|\oo_1|>0$ and $w(x,0)>0$ in $\Omega_1$. By analogous argument to deal with the above special case, we have $w(x,t)>0$ in $\ol\Omega\times(0,T]$. This implies 
  \[u_t \geq w(x,t) >0,\;\;\; x\in\boo,\;0<t\le T.\]
We integrate the above inequality over $(0,t)$ to obtain that
\[u(x,t) -u(x,0) \geq \int_{0}^{t}w(x,s) \mathrm{d}s >0,\qquad x\in\boo,\;0<t\le T.\]
Since $w(x,t)$ is continuous on $\bqq$,  we can further derive that $\inf_{x \in\Omega}u(x,t)>0$. 
\zzz\end{proof}

\begin{col}\lbl{col4.1} Let $p\in L^\yy(Q_T)$. Assume that $u\in \mathbb{Z}^1$ and $u\ge 0$ in $Q_T$. If $u$ satisfies
 \bess
 u_t\ge\dd\int_\oo J(x,y)u(y,t)\dy+p(x,t)u,\;\;&(x,t)\in Q_T,
 \eess
and $u(x_0, t_0)=0$ for some $(x_0,t_0)\in Q_T$, then $u(x_0, t)\equiv 0$ in $[0,t_0]$.\www
\end{col}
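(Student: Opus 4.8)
The idea is that, restricted to the single spatial fibre $x=x_0$, the differential inequality for $u$ degenerates into a scalar ODE inequality in which every term other than $u_t$ is nonnegative; this forces $t\mapsto u(x_0,t)$ to be monotone nondecreasing, so a zero of $u(x_0,\cdot)$ at $t_0$ pins it down to zero on the whole interval $[0,t_0]$. The nonlocal coupling term, which is a priori uncontrolled, is simply discarded because of its sign — this is exactly where the hypothesis $u\ge 0$ and the nonnegativity of $J$ enter.

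\emph{Step 1 (renormalisation).} Since $p\in L^\infty(Q_T)$, choose $k>0$ with $k+p(x,t)>0$ in $\ol Q_T$ and set $v(x,t)={\rm e}^{kt}u(x,t)$. Then $v\in\mathbb{Z}^1$, $v\ge 0$ in $Q_T$, $v(x_0,t_0)=0$, and $v$ satisfies
\[
v_t\ge\int_\oo J(x,y)v(y,t)\dy+[k+p(x,t)]v,\qquad (x,t)\in Q_T.
\]
\emph{Step 2 (monotonicity along the fibre).} Because $v\in\mathbb{Z}^1$, the map $t\mapsto v(x_0,t)$ is $C^1$ on $[0,T]$, and evaluating the above inequality at $x=x_0$ gives, for every $t\in(0,t_0]$,
\[
\frac{{\rm d}}{{\rm d}t}v(x_0,t)\ge\int_\oo J(x_0,y)v(y,t)\dy+[k+p(x_0,t)]v(x_0,t)\ge 0,
\]
where the first integral is $\ge 0$ since $v\ge 0$ and $J\ge 0$, and the second summand is $\ge 0$ since $k+p(x_0,t)>0$ and $v(x_0,t)\ge 0$. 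Hence $t\mapsto v(x_0,t)$ is nondecreasing on $(0,t_0]$, and by continuity on the closed interval $[0,t_0]$.

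\emph{Step 3 (conclusion).} For any $t\in[0,t_0]$ we then have $0\le v(x_0,t)\le v(x_0,t_0)=0$, so $v(x_0,t)=0$, i.e. $u(x_0,t)\equiv 0$ in $[0,t_0]$.

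\textbf{Main obstacle.} There is essentially no analytic difficulty here; the only points requiring care are that the differential inequality is used \emph{pointwise} in $x$ (so the nonlocal term is dropped purely on the basis of its sign, not estimated), and that the $C^1$-in-$t$ regularity encoded in the class $\mathbb{Z}^1$ is precisely what legitimises the monotonicity argument, including up to the endpoint $t=0$. One could alternatively deduce the statement from Lemma~\ref{le3.1}(ii) by translating the initial time, but the direct argument above is shorter and self-contained.
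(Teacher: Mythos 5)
Your proof is correct, and it is essentially the paper's own argument: the paper states this corollary without a separate proof as an immediate consequence of Lemma \ref{le3.1}(ii), whose proof is exactly your computation (multiply by ${\rm e}^{kt}$ with $k+p>0$, drop the nonnegative nonlocal term on the fibre $x=x_0$, and use the resulting monotonicity of $t\mapsto {\rm e}^{kt}u(x_0,t)$). Your direct monotonicity conclusion on $[0,t_0]$ from $u(x_0,t_0)=0$ is just the time-shifted reading of that lemma, so there is nothing to add.
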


Using Lemma \ref{le3.1} we have the following comparison principle.\www

\begin{theo}\lbl{t3c.1}{\rm(Comparison principle)} Assume that $\bar u,\ud u\in \mathbb{Z}^m$, $\bar u,\ud u\ge 0$ and  satisfy
 \bes\begin{cases}
 \bar u_{it}\ge d_i\dd\int_\oo J_i(x,y)\bar u_i(y,t)\dy-d_i^*(x)\bar u_i+f_i(x,t, \bar u),\;\;&(x,t)\in Q_T,\\[3mm]
 \ud u_{it}\le d_i\dd\int_\oo J_i(x,y)\ud u_i(y,t)\dy-d_i^*(x)\ud u_i+f_i(x,t, \ud u),\;\;&(x,t)\in Q_T,\\[1mm]
 \bar u(x,0)\ge \ud u(x,0),&(x,t)\in\ol Q_T,\\
i=1,\cdots,m.
 \end{cases}\lbl{3c.3}\ees
If $f_i(x,t,u)\in C(\bqq\times\mathbb{R}_+^m)$, $f_i(x,t,u)$ is locally Lipschitz continuous with respect to $u$ for all $i\in\sss$ and $f(x,t,u)$ is cooperative for all $u \ge 0$, then $\bar u\ge\ud u$ in $\ol Q_T$.\zzz
\end{theo}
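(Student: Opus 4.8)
The plan is to reduce the comparison principle to the scalar maximum principle already proved in Lemma~\ref{le3.1} (and Corollary~\ref{col4.1}) by forming the difference $w = \bar u - \ud u$ and showing it satisfies a cooperative linear differential inequality with $L^\infty$ coefficients. First I would set $w_i = \bar u_i - \ud u_i$ for each $i\in\sss$; subtracting the two inequalities in \eqref{3c.3} gives
\bess
w_{it} \ge d_i\int_\oo J_i(x,y) w_i(y,t)\,\dy - d_i^*(x) w_i + f_i(x,t,\bar u) - f_i(x,t,\ud u),\;\;(x,t)\in Q_T,
\eess
with $w_i(x,0)\ge 0$. The term $f_i(x,t,\bar u) - f_i(x,t,\ud u)$ must be rewritten linearly in $w$. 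The standard device is to interpolate: write this difference as a sum over $k$ of one-dimensional increments and use local Lipschitz continuity of $f_i$ in $u$ to produce bounded functions $c_{ik}(x,t)$ with
\bess
f_i(x,t,\bar u) - f_i(x,t,\ud u) = \sum_{k=1}^m c_{ik}(x,t)\, w_k(x,t),
\eess
where, crucially, $c_{ik}\ge 0$ for $k\ne i$ by the cooperativity assumption on $f$. Here one must be slightly careful that $\bar u,\ud u$ range over a bounded set (they are continuous on the compact $\bqq$), so the local Lipschitz constant is uniform; the $c_{ik}$ are measurable in $(x,t)$ and bounded, hence in $L^\infty(Q_T)$.

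The main obstacle is that this difference inequality is a genuinely coupled system, whereas Lemma~\ref{le3.1} is scalar; so the argument cannot be applied componentwise directly. The way around this is an exponential shift plus an iteration/monotone-scheme argument. Choose a constant $K>0$ large enough that $K + c_{ii}(x,t) - d_i^*(x)\ge \eta > 0$ on $\bqq$ for all $i$ (possible since all quantities are bounded), and set $v_i = \mathrm e^{Kt} w_i$. Then
\bess
v_{it} \ge d_i\int_\oo J_i(x,y) v_i(y,t)\,\dy + \sum_{k=1}^m \hat c_{ik}(x,t) v_k,\qquad \hat c_{ik} = c_{ik} - \delta_{ik}\big(d_i^*(x) - K\big),
\eess
where now $\hat c_{ik}\ge 0$ for \emph{all} $i,k$. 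Suppose for contradiction that $w$, hence $v$, is not nonnegative on $\bqq$; let $t^* = \inf\{t : v_i(x,t) < 0 \text{ for some } i, x\}$. Since each $v_i(\cdot,t)$ is continuous in $x$ and $v_i(x,\cdot)$ is $C^1$ in $t$ (as $\bar u,\ud u\in\mathbb Z^m$), and $v(\cdot,0)\ge 0$, we have $v(\cdot,t)\ge 0$ on $[0,t^*]$ and there is an index $j$ and point $x^*$ with $v_j(x^*,t^*) = 0$ while $v_j(x^*,t)\ge 0$ for $t\le t^*$. But then $v_{jt}(x^*,t^*)\le 0$, whereas the differential inequality at $(x^*,t^*)$ gives
\bess
v_{jt}(x^*,t^*) \ge d_j\int_\oo J_j(x^*,y) v_j(y,t^*)\,\dy + \sum_{k} \hat c_{jk}(x^*,t^*) v_k(x^*,t^*) \ge 0,
\eess
using $v(\cdot,t^*)\ge 0$ and $\hat c_{jk}\ge 0$. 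This forces $v_{jt}(x^*,t^*) = 0$ and, more importantly, $\int_\oo J_j(x^*,y) v_j(y,t^*)\,\dy = 0$, so $v_j(\cdot,t^*)\equiv 0$ on a neighborhood of $x^*$ (using $J_j(x^*,x^*)>0$ from {\bf(J)} and continuity). One must then propagate this: either argue that $t^*$ cannot actually be an infimum attaining negativity just beyond it (quantitatively, using the $C^1$-in-$t$ regularity to get $v_j(x^*,t) = \int_0^t v_{jt}(x^*,s)\,\mathrm ds$ and a Gronwall-type estimate forcing $v_j(x^*,\cdot)\ge 0$ slightly past $t^*$), or invoke Corollary~\ref{col4.1}-style reasoning on the scalar inequality $v_{jt}\ge \int_\oo J_j v_j + (\text{bounded})v_j$ that $v_j$ itself satisfies after dropping the nonnegative off-diagonal terms. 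I expect the cleanest route is the latter: once $v(\cdot,t)\ge 0$ on $[0,t^*]$ is known, each $v_j$ individually satisfies a scalar inequality of the form in Lemma~\ref{le3.1} with $p = \hat c_{jj}\in L^\infty$, so $v_j\ge 0$ on $[0,t^*+\epsilon]$ for some $\epsilon>0$, contradicting the definition of $t^*$ unless $t^* = T$; and at $t^* = T$ there is nothing to prove. Undoing the exponential shift yields $w\ge 0$, i.e., $\bar u\ge\ud u$ on $\bqq$, completing the proof.

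The technical care points worth flagging in the write-up are: (a) justifying that $c_{ik}$ is bounded — this uses that $\bar u,\ud u$ take values in a fixed compact subset of $\mathbb R_+^m$ and the \emph{local} Lipschitz hypothesis, together with the mean-value-type interpolation which keeps intermediate arguments in the same compact set because $\mathbb R_+^m$-order-intervals are convex; (b) the sign of $c_{ik}$ for $k\ne i$ genuinely requires $f$ cooperative, which is hypothesized; (c) the regularity $v_i(x,\cdot)\in C^1([0,T])$ for each fixed $x$, inherited from $\bar u,\ud u\in\mathbb Z^m$, is what licenses the pointwise-in-$t$ argument at the first contact time — this is exactly the place the paper's insistence on differentiability in $t$ pays off, and it should be invoked explicitly.
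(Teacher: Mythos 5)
Your overall strategy — set $w=\bar u-\ud u$, linearize $f_i(x,t,\bar u)-f_i(x,t,\ud u)=\sum_k c_{ik}w_k$ by one-dimensional increments (so that local Lipschitz continuity gives $c_{ik}\in L^\yy(Q_T)$ and cooperativity gives $c_{ik}\ge 0$ for $k\ne i$), and then make an exponential shift so that all coefficients are nonnegative — is exactly the reduction to Lemma \ref{le3.1} that the paper has in mind (the paper offers no written proof beyond the remark that the theorem follows from that lemma). One factual slip: members of $\mathbb{Z}^m$ are only bounded and $C^1$ in $t$ for each fixed $x$; they are \emph{not} continuous in $x$, so your parenthetical ``they are continuous on the compact $\bqq$'' and the later claim that $v_i(\cdot,t)$ is continuous in $x$ are unjustified. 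Boundedness alone is enough for the uniform Lipschitz constant, but the lack of $x$-continuity matters for what follows.

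The genuine gap is in the continuation past $t^*$. Your preferred route is circular: to say that $v_j$ ``individually satisfies a scalar inequality of the form in Lemma \ref{le3.1}'' on $[0,t^*+\epsilon]$ you must discard the off-diagonal terms $\sum_{k\ne j}\hat c_{jk}v_k$ as nonnegative, which requires $v_k\ge 0$ on $[0,t^*+\epsilon]$ — precisely what is to be proved; on $[0,t^*]$, where the sign is known, the lemma only returns information you already have. The alternative first-contact route is also not available as stated, because without continuity in $x$ the infimum defining $t^*$ need not be attained at any point $(x^*,t^*)$: negativity may occur at times $t_n\downarrow t^*$ but at different $x_n$, and no touching point with $v_j(x^*,t^*)=0$ need exist. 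The standard repair, which uses only the regularity you actually have ($L^\yy$ in $(x,t)$, $C^1$ in $t$ for each $x$), is a Gronwall argument on the negative part: after the shift, integrate the inequality in $t$ to get $v_i(x,t)\ge v_i(x,0)-\int_0^t\big(d_i\int_\oo J_i(x,y)v_i^-(y,s)\dy+\sum_k\hat c_{ik}(x,s)v_k^-(x,s)\big){\rm d}s$, deduce $V(t)\le C\int_0^t V(s){\rm d}s$ for $V(t)=\max_i\sup_{x}v_i^-(x,t)$ (with $V(0)=0$, all quantities bounded), and conclude $V\equiv 0$, i.e.\ $w\ge 0$. With that replacement for your contradiction step, the proof is complete and consistent with the paper's intended use of Lemma \ref{le3.1}.
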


Then we study the strong maximum principle for the systems. To achieve this aim, we first prove a conclusion about the scalar equation.\zzz

\begin{lem}\lbl{le3.3} Let $p\in L^\yy(Q_T)$ and $U\in \mathbb{Z}^1$. If $U\ge 0$ and satisfies
 \bes\begin{cases}
 U_t\ge d\dd\int_\oo J(x,y)U(y,t)\dy-d^*(x)U(x,t)+p(x,t)U(x,t),&(x,t)\in Q_T,\\[3mm]
 U(x,0)\ge U(x,T),&(x,t)\in\ol Q_T.
 \end{cases} \lbl{3c.7}\ees
Then either $U\equiv 0$, or $U>0$ in $\ol Q_T$ and $\inf_{Q_T}U>0$.\zzz
\end{lem}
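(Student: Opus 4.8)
The plan is to establish the dichotomy by combining the scalar maximum principle already available (Lemma \ref{le3.1}) with the $T$-periodicity-type condition $U(x,0)\ge U(x,T)$. First I would rewrite the differential inequality in a convenient form: choosing $k>0$ large so that $\tilde p(x,t):=k+p(x,t)-d^*(x)>0$ on $\ol Q_T$, the function $V(x,t)={\rm e}^{kt}U(x,t)$ satisfies $V\ge 0$ and
\[
V_t\ge d\int_\oo J(x,y)V(y,t)\dy+\tilde p(x,t)V,\qquad (x,t)\in Q_T,
\]
which is exactly the hypothesis of Lemma \ref{le3.1} with the nonnegative coefficient $\int_\oo J(x,y)V(y,t)\dy$ absorbed appropriately and $p$ replaced by $\tilde p$. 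So without loss of generality I may assume $p>0$ from the start.

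Next I would run the trichotomy from Lemma \ref{le3.1}(iii): for the equation under consideration, either $U(\cdot,0)\equiv 0$ on $\boo$, or $U(x_0,0)>0$ for some $x_0$, in which case $\inf_\oo U(x,t)>0$ for every $t\in(0,T]$ — but here one must be careful because Lemma \ref{le3.1}(iii) in its strongest form requires $u\in\mathbb X^1$, whereas we only have $U\in\mathbb Z^1$. The key point to handle this is the periodicity-type inequality $U(x,0)\ge U(x,T)$: if $U(\cdot,0)\not\equiv 0$, then since $U\ge 0$ and $U(x,0)>0$ on a set of positive measure, Lemma \ref{le3.1}(iii) (the general $|\Omega_1|>0$ version, which only needs $U\in\mathbb Z^1$) gives $\inf_\oo U(\cdot,t)>0$ for all $t\in(0,T]$, in particular $U(x,T)\ge c>0$ on $\boo$, whence $U(x,0)\ge U(x,T)\ge c>0$ on $\boo$; feeding this back, $\inf_{Q_T}U>0$ and $U>0$ in $\ol Q_T$. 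On the other hand, if $U(\cdot,0)\equiv 0$: then by Corollary \ref{col4.1}, applied with $p$ replaced by $\tilde p$ so that the hypothesis $u\ge0$ holds, any zero of $U$ at time $t_0$ forces $U(x_0,\cdot)\equiv 0$ on $[0,t_0]$; more directly, since $U(x,0)=0$ everywhere, the integrated inequality $U(x,t)\ge U(x,0)+\int_0^t\int_\oo J(x,y)U(y,s)\dy\,ds$ together with $U\ge0$ and a Gronwall-type iteration shows $U\equiv 0$ on $Q_T$, hence on $\ol Q_T$.

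The only genuinely delicate step is the case $U(\cdot,0)\equiv 0$ but $U\not\equiv 0$ on $Q_T$: one must rule this out. This is where the periodicity inequality is essential — the standard autonomous strong maximum principle does not apply since nonzero data could enter "later in time", but the constraint $U(x,0)\ge U(x,T)$ closes the loop. Concretely, if $U\not\equiv0$ then there is $t_1\in(0,T]$ with $U(\cdot,t_1)\not\equiv0$; by Lemma \ref{le3.1}(iii) applied on the time interval $[t_1,t_1+\tau]$ after a periodic extension (or by reapplying the lemma with shifted initial time, using $U\in\mathbb Z^1$), one propagates positivity forward so that $U(x,T)>0$ on $\boo$, contradicting $U(x,0)\ge U(x,T)$ with $U(x,0)\equiv 0$. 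I expect this propagation-and-contradiction argument — making rigorous that positivity at some interior time forces strict positivity at $t=T$ for merely $\mathbb Z^1$-regular solutions — to be the main obstacle, and the resolution is to invoke the integrated form of the inequality together with the structure already exploited in the proof of Lemma \ref{le3.1}(iii), namely first bootstrapping regularity of $w(x,t)=\int_\oo J(x,y)U(y,t)\dy$, which is continuous, and then arguing pointwise in $x$.
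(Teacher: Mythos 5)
Your final argument is correct, but it takes a genuinely different route from the paper's. You reduce everything to the scalar positivity lemma already proved for the initial-value problem: if $U(\cdot,0)$ is positive on a set of positive measure, Lemma \ref{le3.1}(iii) (whose general case indeed only needs $\mathbb{Z}^1$ regularity, tolerates the kernel $dJ$ and the bounded potential $p-d^*$, and whose proof in fact gives the lower bound for all $x\in\boo$, not just $x\in\oo$) yields $\inf_{\boo}U(\cdot,T)\ge c>0$; the wrap-around condition $U(x,0)\ge U(x,T)$ transports this bound back to $t=0$, and the forward monotonicity of ${\rm e}^{kt}U(x,t)$ then gives $U\ge c\,{\rm e}^{-kT}$ on all of $\ol Q_T$, i.e. both conclusions at once. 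If instead $U(\cdot,0)=0$, then $U(\cdot,T)=0$ as well, and any slice $U(\cdot,t_1)\not\equiv0$ with $t_1<T$ would, by the time-shifted Lemma \ref{le3.1}(iii), force $\inf_{\oo}U(\cdot,T)>0$, a contradiction; hence $U\equiv0$. The paper instead works with the continuous function $h(x,t)=\int_\oo J(x,y)U(y,t)\,{\rm d}y$: an open--closed argument (using Corollary \ref{col4.1} together with $U(x,0)\ge U(x,T)$ to wrap zero sets around in time) shows $h>0$ on $\ol Q_T$, hence $U>0$ pointwise, and then a separate, rather delicate sequence argument (the Claim about points with $U_t\le 0$ and the construction of $\bar t_l$) upgrades pointwise positivity to $\inf_{Q_T}U>0$. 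Your route buys a much cheaper uniform lower bound---it falls out of $U(x,0)\ge U(x,T)\ge c$ plus monotonicity---at the price of leaning on the general case of Lemma \ref{le3.1}(iii) and of phrasing the dichotomy at the $t=0$ slice in the a.e.\ sense (the natural reading in the $\mathbb{Z}^1$/$L^\infty$ framework; the paper's ``$U\equiv0$'' branch carries the same measure-zero caveat).

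Two asides in your write-up should be corrected, though neither is load-bearing. First, the claim that when $U(\cdot,0)\equiv0$ ``a Gronwall-type iteration shows $U\equiv0$'' is false: \eqref{3c.7} is a one-sided (supersolution) inequality, so zero initial data gives only the trivial lower bound $U\ge0$; likewise Corollary \ref{col4.1} propagates zeros \emph{backwards} in time and yields nothing from knowledge at $t=0$. Your third-paragraph argument---propagate positivity forward to $t=T$ and contradict $U(x,0)\ge U(x,T)$---is the correct, and indeed the only, mechanism here; it is precisely where the periodicity-type condition enters. Second, ``without loss of generality $p>0$'' must be used with care: the substitution $V={\rm e}^{kt}U$ does not preserve the inequality $U(x,0)\ge U(x,T)$. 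In your actual argument this is harmless, since you apply Lemma \ref{le3.1} to $U$ itself (it needs no sign condition on $p$) and use the exponential change of variables only to obtain forward-in-time monotonicity of ${\rm e}^{kt}U$; just do not literally replace the problem by the transformed one.
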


\begin{proof} Define
  \[h(x,t)=\int_\oo J(x,y)U(y,t)\dy.\]
Then $h\in C(\ol Q_T)$ and $h$ is nonnegative in $Q_T$.
We claim that if there exists $(x_0,t_0)\in\ol Q_T$ such that $h(x_0,t_0)>0$, then $U>0$ in $\ol Q_T$ and $\inf_{Q_T}U>0$. This implies the desired results.

We first show $U(x_0,t_0)>0$. If $U(x_0,t_0)=0$, we may think of $t_0>0$ since $0\le U(x_0,T)\le U(x_0,0)$. Take advantage of the facts that $U(x_0,\cdot)\in C^1([0,T])$ and $U(x_0,t)\ge 0$, it derives that $U_t(x_0,t_0)\le 0$. Recalling the differential equation of \qq{3c.7}, this is impossible.

We next prove that $h>0$ in $\ol Q_T$. By the continuity of $h(x,t)$, the set
 \[{\cal O}=\{(x,t)\in\ol Q_T:\, h(x,t)>0\}\]
is an open subset of $\ol Q_T$, and $U>0$ in ${\cal O}$ by the above claim. We will prove that ${\cal O}$ is a closed subset of $\ol Q_T$. Let $(x_l,t_l)\in{\cal O}$ and $(x_l,t_l)\to (x^*,t^*)\in\ol Q_T$, then $U(x_l,t_l)>0$. Since ${\cal O}$ is an open subset of $\ol Q_T$, there exists  $\delta_l>0$ such that $U(x,t)>0$ in $B_{\delta_l}((x_l,t_l))\cap\ol Q_T$. If $h(x^*,t^*)=0$, then $U(x,t^*)=0$ in a neighborhood $B_\sigma(x^*)\cap\boo$ for some $\sigma>0$ since $J(x^*,x^*)>0$ and $J(x^*,y)$ is continuous in $y\in\ol Q_T$. In the case of $t^*>0$, it follows from Corollary \ref{col4.1} that $U(x,t)=0$ in $(B_\sigma(x^*)\cap\boo)\times[0,t^*]$. Certainly, $U(x,0)=0$, which implies $U(x,T)=0$ in $B_\sigma(x^*)\cap\boo$. Thanks to Corollary \ref{col4.1} again, $U(x,t)=0$ in $(B_\sigma(x^*)\cap\boo)\times[0,T]$. In the case of $t^*=0$, we have $U(x,T)=0$ in $B_\sigma(x^*)\cap\boo$. Similarly, $U(x,t)=0$ in $(B_\sigma(x^*)\cap\boo)\times[0,T]$. So, there exists $\delta>0$ such that $U(x,t)=0$ in $B_\delta((x^*,t^*))\cap\ol Q_T$. This is impossible as $(x_l,t_l)\to (x^*,t^*)\in\ol Q_T$ and $U(x_l,t_l)>0$. Thus, $h(x^*,t^*)>0$ and ${\cal O}$ is a closed subset of $\ol Q_T$. It then follows that ${\cal O}=\ol Q_T$ and hence $h>0$, $U>0$ in $\ol Q_T$.

Now we prove $\inf_{Q_T}U>0$. Assume $\inf_{Q_T}U=0$. We first have the following claim.
 \begin{enumerate}\vspace{-2mm}
\item[{\bf Claim}] There is no sequence $\{(x_l,t_l)\}\subset\ol Q_T$ such that $U_t(x_l,t_l)\le 0$ and $U(x_l,t_l)\to 0$.
   \end{enumerate}\vspace{-2mm}
In fact, by choosing a subsequence if necessary, we may assume that $(x_l,t_l)\to (x_*,t_*)\in\ol Q_T$. Take $(x,t)=(x_l,t_l)$ in \qq{3c.7} to derive
 \[0\ge U_t(x_l,t_l)\ge d h(x_l,t_l)-d^*(x_l)U(x_l,t_l)+p(x_l,t_l)U(x_l,t_l),\]
and let $l\to+\yy$ to deduce $h(x_*,t_*)\le 0$ as $h(x,t)$ is continuous. A contradiction is obtained because we have known that $h>0$ in $\ol Q_T$.

As $\inf_{Q_T}U=0$, by the above Claim, there exist $(x_l,t_l)\in\ol Q_T$ such that $U(x_l,t_l)\to 0$ as $l\to+\yy$ and $U_t(x_l,t_l)>0$ for all $l$. In this part of the proof, only the subsequence is sufficient, so we no longer emphasize the subsequence but write it as the sequence itself. If $t_l=0$, then
$U(x_l,T)\le U(x_l,t_l)\to 0$. So, we can think of $t_l>0$. Since $U_t(x_l,t)$ is continuous in $t$, the following number
 \[\bar t_l=\inf\{0\le t< t_l:\, U_t(x_l,s)>0,\;\;\forall\,t\le s\le t_l\}\]
is well defined, and $U(x_l,\bar t_l)\le U(x_l,t_l)\to 0$. If $\bar t_l>0$, then $U_t(x_l,\bar t_l)=0$. This is impossible by the above Claim. Therefore, $\bar t_l=0$ and $U(x_l,T)\le U(x_l,0)\to 0$. So $U_t(x_l,T)>0$, i.e., $t_l=T$ and $U_t(x_l,t)>0$ in $(0, T]$. This implies
 \bes
 U(x_l,t)<U(x_l,T)\to 0\;\;\mbox{ as }\; l\to+\yy,\;\; \forall \;0\le t\le T.
\lbl{3c.8}\ees
However, by \qq{3c.7} we have
 \[U(x_l,T)\ge U(x_l,0)+\int_0^T\big[d h(x_l,t)-d^*(x_l)U(x_l,t)+p(x_l,t)U(x_l,t)\big]\dt.\]
Assume that $x_l\to x_*\in\ol Q_T$. Letting $l\to+\yy$ in the above inequality, and using \qq{3c.8} and the dominated convergence theorem, we have
 \[\int_0^T  h(x_*,t)\dt\le 0.\]
This is a contradiction as $h(x_*, t)>0$ and $h(x_*, t)$ is continuous in $t\in[0,T]$. The conclusion $\inf_{Q_T}U>0$ is proved.\zzz
\end{proof}

\begin{col}\lbl{c3.1} Let $p_{ik}\in L^\yy(Q_T)$ and $(p_{ik})_{m\times m}$ be cooperative in $Q_T$, i.e. $p_{ik}\ge 0$ in $Q_T$ when $i\not=k$.  Assume that $U\in \mathbb{Z}^m$, $U\ge 0$ and satisfies
 \bess\begin{cases}
U_{it}>d_i\dd\int_\oo J_i(x,y)U_i(y,t)\dy-d^*_i(x)U_i(x,t)
+\sum_{k=1}^mp_{ik}(x,t)U_k(x,t),\;& (x,t)\in\ol Q_T, \\
U_i(x,0)\ge U_i(x,T), &x\in\boo,\\
 i=1,\cdots, m.
 \end{cases}\eess
Then $U_i>0$ in $\ol Q_T$ and $\inf_{Q_T}U_i>0$ for all $i\in\sss$.
\end{col}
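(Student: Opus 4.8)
The plan is to split the cooperative system into $m$ scalar differential inequalities, invoke Lemma~\ref{le3.3} for each component separately, and then exploit the \emph{strictness} of the inequality to discard the trivial alternative in that lemma's dichotomy.

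First I would fix $i\in\sss$. Since $(p_{ik})_{m\times m}$ is cooperative and $U\ge 0$, the off-diagonal contribution $\sum_{k\ne i}p_{ik}(x,t)U_k(x,t)$ is nonnegative on $\bqq$; discarding it weakens the hypothesis to
\[U_{it}\ge d_i\dd\int_\oo J_i(x,y)U_i(y,t)\dy-d^*_i(x)U_i+p_{ii}(x,t)U_i,\qquad (x,t)\in Q_T,\]
while the condition $U_i(x,0)\ge U_i(x,T)$ is retained. As $U_i\in\mathbb{Z}^1$, $U_i\ge 0$ and $p_{ii}\in L^\yy(Q_T)$, this is exactly the setting of Lemma~\ref{le3.3}. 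Hence, for each $i\in\sss$, either $U_i\equiv 0$, or $U_i>0$ in $\bqq$ and $\inf_{Q_T}U_i>0$.

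It then remains to rule out $U_i\equiv 0$. Suppose $U_i\equiv 0$ for some index $i$; since $U_i(x,\cdot)\in C^1([0,T])$, this forces $U_{it}\equiv 0$, and evaluating the strict inequality of the corollary for this index at any $(x,t)\in\bqq$ gives
\[0=U_{it}(x,t)>\sum_{k=1}^m p_{ik}(x,t)U_k(x,t)=\sum_{k\ne i}p_{ik}(x,t)U_k(x,t)\ge 0,\]
a contradiction. Therefore no component vanishes identically, and the dichotomy above yields $U_i>0$ in $\bqq$ with $\inf_{Q_T}U_i>0$ for every $i\in\sss$.

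I do not anticipate a genuine obstacle here: the crucial observation is that the strictness of the differential inequality makes the positivity of each component self-contained, so that—unlike the usual strong maximum principle for cooperative systems—no irreducibility assumption on $(p_{ik})$ is required. The only points to check carefully are that the nonnegative off-diagonal terms may legitimately be dropped when reducing to Lemma~\ref{le3.3}, and that $U_i\equiv 0$ implies $U_{it}\equiv 0$, which relies on the $C^1$-in-time regularity built into the space $\mathbb{Z}^m$.
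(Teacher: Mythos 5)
Your proposal is correct and is essentially the derivation the paper intends: Corollary~\ref{c3.1} is stated right after Lemma~\ref{le3.3} precisely because one applies that lemma componentwise (dropping the nonnegative off-diagonal terms by cooperativity and $U\ge 0$) and then uses the strictness of the inequality, together with the $C^1$-in-time regularity of $\mathbb{Z}^m$, to exclude the alternative $U_i\equiv 0$. Both the reduction step and the contradiction $0=U_{it}>\sum_{k\ne i}p_{ik}U_k\ge 0$ are sound, so no gap remains.
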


\begin{theo}\lbl{th3.2}{\rm(Strong maximum principle)} Let $p_{ik}\in L^\yy(Q_T)$ and $(p_{ik})_{m\times m}$ be cooperative in $Q_T$, i.e. $p_{ik}\ge 0$ in $Q_T$ when $i\not=k$. Assume that $U\in \mathbb{Z}^m$, $U\ge 0$ and satisfies
 \bess\begin{cases}
U_{it}\ge d_i\dd\int_\oo J_i(x,y)U_i(y,t)\dy-d^*_i(x)U_i(x,t)
+\sum_{k=1}^mp_{ik}(x,t)U_k(x,t),\;& (x,t)\in\ol Q_T, \\
U_i(x,0)\ge U_i(x,T), &x\in\boo,\\
 i=1,\cdots, m.
 \end{cases}\eess
Define $\tilde{p}_{ik}=\frac1{|\Omega|T}\int_0^T\int_{\Omega}p_{ik}(x,t) \mathrm{d}x\mathrm{d}t$. If $(\tilde{p}_{ik})_{m\times m}$ is irreducible, then either $U_i\equiv 0$ in $\ol Q_T$ for all $i\in\sss$, or $U_i>0$ in $\ol Q_T$ and $\inf_{Q_T}U_i>0$ for all $i\in\sss$.
\end{theo}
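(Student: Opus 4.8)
The plan is to reduce the system to $m$ scalar problems by using cooperativity together with $U\ge 0$, to apply the scalar dichotomy of Lemma~\ref{le3.3} componentwise, and then to exclude the ``mixed'' case (some components identically zero, others strictly positive) by means of the irreducibility of $(\tilde{p}_{ik})_{m\times m}$.

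First I would fix $i\in\sss$ and discard the off-diagonal coupling: since $p_{ik}\ge 0$ in $Q_T$ and $U_k\ge 0$ for $k\ne i$, the $i$-th inequality implies
\[
U_{it}\ge d_i\int_\oo J_i(x,y)U_i(y,t)\dy-d_i^*(x)U_i(x,t)+p_{ii}(x,t)U_i(x,t)\;\;\text{in }Q_T,
\]
with $p_{ii}\in L^\yy(Q_T)$, while $U_i\in\mathbb{Z}^1$, $U_i\ge 0$, and $U_i(x,0)\ge U_i(x,T)$ in $\boo$. Thus Lemma~\ref{le3.3} applies to each component and yields the dichotomy: for every $i\in\sss$, either $U_i\equiv 0$ in $\ol Q_T$, or $U_i>0$ in $\ol Q_T$ with $\inf_{Q_T}U_i>0$.

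Next I would split $\sss=\mathbb{I}\cup\mathbb{K}$, where $\mathbb{I}=\{i\in\sss:\,U_i\equiv 0\text{ in }\ol Q_T\}$ and $\mathbb{K}=\sss\setminus\mathbb{I}$. If $\mathbb{K}=\emptyset$ or $\mathbb{I}=\emptyset$ the assertion of the theorem holds at once, so assume both are nonempty. For $i\in\mathbb{I}$ one has $U_{it}\equiv 0$ and $\int_\oo J_i(x,y)U_i(y,t)\dy\equiv 0$, so the $i$-th inequality collapses to $0\ge\sum_{k\in\mathbb{K}}p_{ik}(x,t)U_k(x,t)\ge 0$ in $Q_T$ (using $U_k\equiv 0$ for $k\in\mathbb{I}$, and $p_{ik}\ge 0$, $U_k\ge 0$ for $k\in\mathbb{K}$ since $i\ne k$). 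Hence $p_{ik}(x,t)U_k(x,t)=0$ for a.e. $(x,t)\in Q_T$, for all $i\in\mathbb{I}$ and $k\in\mathbb{K}$; and since the scalar dichotomy gives $\inf_{Q_T}U_k>0$ for each $k\in\mathbb{K}$, this forces $p_{ik}=0$ a.e. in $Q_T$, so $\tilde{p}_{ik}=\frac1{|\oo|T}\int_0^T\int_\oo p_{ik}\dx\dt=0$ for all $i\in\mathbb{I}$, $k\in\mathbb{K}$, contradicting the irreducibility of $(\tilde{p}_{ik})_{m\times m}$. This would complete the proof.

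I do not expect a serious obstacle: once Lemma~\ref{le3.3} is available the argument is essentially bookkeeping. The single point that genuinely has to be used is that the scalar statement provides a positive \emph{infimum} $\inf_{Q_T}U_k>0$, not merely pointwise positivity; otherwise one could not pass from $p_{ik}U_k\equiv 0$ to $p_{ik}\equiv 0$, given that $p_{ik}\in L^\yy$ only. The more delicate handling of the time variable --- differentiability of $U_i(x,\cdot)$ and the role of the periodicity inequality at $t=0$ and $t=T$ --- is already packaged inside Lemma~\ref{le3.3}, so it need not be redone here.
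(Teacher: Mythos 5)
Your proposal is correct and follows essentially the same route as the paper: reduce to the scalar dichotomy of Lemma \ref{le3.3} componentwise by discarding the off-diagonal terms, then use the irreducibility of $(\tilde{p}_{ik})_{m\times m}$ to rule out the mixed case via $p_{ik}U_k\equiv 0$ for $i$ in the zero set and $k$ in its complement. (The only cosmetic difference is that the paper concludes $p_{ik}\equiv 0$ directly from the pointwise positivity $U_k>0$ in $\ol Q_T$, so the positive infimum, while available, is not strictly needed at that step.)
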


\begin{proof} For any $i\in\sss$, there holds:
 \[U_{it}\ge d_i\dd\int_\oo J_i(x,y)U_i(x,t)\dy-d^*_i(x)U_i(x,t)+p_{ii}(x)U_i(x,t),\;\; (x,t)\in\ol Q_T.\]
By Lemma \ref{le3.3}, we have that either $U_i\equiv 0$ in $\ol Q_T$, or $U_i>0$ in $\ol Q_T$ and $\inf_{Q_T}U_i>0$.

Assume that there exists $\bar i\in\sss$ such that $U_{\bar i}\equiv 0$ in $\ol Q_T$. Then we have
 \[{\cal I}=\{i\in\sss:\,U_i\equiv 0\;\;\mbox{in}\,\;\ol Q_T\}\not=\emptyset.\]
Define ${\cal I}^c=\sss\setminus{\cal I}$. We next prove that ${\cal I}^c$ is an empty set. Suppose that ${\cal I}^c$ is not empty, then $U_k>0$ in $\ol Q_T$ for all $k\in{\cal I}^c$, and $U_i \equiv 0$ in $\ol Q_T$ for all $i\in {\cal I}$ by Lemma \ref{le3.3} again. Thus we have
 \bess
 0=U_{it}(x,t)&\ge& d_i\dd\int_\oo J_i(x,y)U_i(y,t)\dy-d^*_i(x)U_i(x,t)
+\sum_{k=1}^m p_{ik}(x,t)U_k(x,t)\\[1mm]
&=&\sum_{k\in {\cal I}^c}p_{ik}(x,t)U_k(x,t),\;\;\forall\,i\in{\cal I},\;  (x,t)\in\bqq.
 \eess
This implies that $p_{ik}\equiv 0$ in $\bqq$ for all $i\in {\cal I}$ and $k\in {\cal I}^c$, which contradicts the fact that $(\tilde{p}_{ik})_{m\times m}$ is irreducible. We thus have that ${\cal I}^c$ is empty. Hence, $U_i\equiv 0$ in $\ol Q_T$ for all $i\in\sss$. \end{proof}\www

Then we investigate the positivity of non-negative and nontrivial solutions of \qq{3.1}.

\begin{theo}\lbl{th3.3}{\rm(Positivity of non-negative and nontrivial solutions)} Assume that {\bf(F1)} and {\bf(F2)} hold. Let $U\in \mathbb{Z}^m$ and $U\ge 0$ be a solution of \qq{3.1}. Then either $U_i\equiv 0$ in $\ol Q_T$ for all $i\in\sss$, or $U_i>0$ in $\ol Q_T$ and $\inf_{\oo}U_i>0$ for all $i\in\sss$.\zzz
\end{theo}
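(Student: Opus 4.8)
The plan is to reduce Theorem \ref{th3.3} to the strong maximum principle already established in Theorem \ref{th3.2}. The idea is that a nonnegative solution $U\in\mathbb{Z}^m$ of \qq{3.1} is, in fact, a solution of a linear cooperative system whose coefficient matrix is obtained by ``dividing'' $f$ along the segment from $0$ to $U$. Concretely, for each $i,k\in\sss$ set
\[
p_{ik}(x,t)=\int_0^1 \partial_{u_k}f_i\big(x,t,sU(x,t)\big)\,{\rm d}s,
\]
so that, since $f_i(x,t,0)=0$ by {\bf(F1)}, the fundamental theorem of calculus gives $f_i(x,t,U)=\sum_{k=1}^m p_{ik}(x,t)U_k(x,t)$ pointwise in $\bqq$. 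Because $f_i\in C^{0,0,1}(\bqq\times\mathbb{R}_+^m)$ and $U$ is bounded, each $p_{ik}$ is bounded and measurable, so $p_{ik}\in L^\yy(Q_T)$; and by the cooperativity in {\bf(F1)}, $\partial_{u_k}f_i\ge 0$ for $k\neq i$, hence $p_{ik}\ge 0$ for $k\neq i$, i.e. $(p_{ik})_{m\times m}$ is cooperative in $Q_T$. Thus $U$ satisfies (with equality, hence in particular $\ge$) the differential inequality of Theorem \ref{th3.2}, and since $U$ solves the time-periodic problem \qq{3.1}, we have $U_i(x,0)=U_i(x,T)$, so the boundary condition $U_i(x,0)\ge U_i(x,T)$ holds as well.

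It then remains to verify the irreducibility hypothesis of Theorem \ref{th3.2}, namely that $(\tilde p_{ik})_{m\times m}$ is irreducible where $\tilde p_{ik}=\frac{1}{|\Omega|T}\int_0^T\int_\Omega p_{ik}\,\dx\,\dt$. This is where assumption {\bf(F2)} enters: there is a point $(x_0,t_0)\in\ol Q_T$ at which $\big(\partial_{u_k}f_i(x_0,t_0,u)\big)_{m\times m}$ is irreducible for \emph{all} $u\ge 0$. In particular, for every $i\neq k$ for which irreducibility ``needs'' the $(i,k)$ entry, we have $\partial_{u_k}f_i(x_0,t_0,u)>0$ for all $u\ge0$ — but one has to be a little careful, since irreducibility of a cooperative matrix is about the directed graph of its nonzero off-diagonal entries, and different entries may be forced at different $u$. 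The clean way is: fix $(x_0,t_0)$ as in {\bf(F2)}; then for each ordered pair $(i,k)$ with $i\neq k$, either $\partial_{u_k}f_i(x_0,t_0,u)=0$ for all $u\ge0$, or $\partial_{u_k}f_i(x_0,t_0,u)>0$ for some $u\ge0$; the set of pairs in the second category is exactly the edge set that makes the matrix irreducible for every relevant $u$ (this uses that a cooperative matrix with a fixed support pattern is irreducible iff that support's digraph is strongly connected, so the ``for all $u$'' in {\bf(F2)} forces the union — equivalently each individual — of these supports to be strongly connected). For such a pair, by continuity of $\partial_{u_k}f_i$ and of $s\mapsto sU(x,t)$, together with $U(x,t)\gg0$ near $x_0$ which follows from the scalar alternative of Lemma \ref{le3.3} applied componentwise (if some $U_i\equiv0$ we are in a degenerate situation handled separately, see below), we get $p_{ik}(x,t)>0$ on a set of positive measure, hence $\tilde p_{ik}>0$.

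The remaining subtlety — and I expect this to be the main obstacle — is the bookkeeping when some components of $U$ vanish identically. By Lemma \ref{le3.3} applied to the $i$-th scalar inequality $U_{it}\ge d_i\int_\Omega J_i(x,y)U_i(y,t)\,\dy-d_i^*(x)U_i+p_{ii}(x,t)U_i$, each component satisfies the dichotomy: either $U_i\equiv0$ in $\ol Q_T$, or $U_i>0$ in $\ol Q_T$ with $\inf_{Q_T}U_i>0$. Let ${\cal I}=\{i:U_i\equiv0\}$. If ${\cal I}=\emptyset$ we are done with the positive alternative; if ${\cal I}=\sss$ we are done with the trivial alternative. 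Suppose $\emptyset\neq{\cal I}\subsetneq\sss$. For $i\in{\cal I}$, plugging $U_i\equiv0$ into \qq{3.1} gives $0=f_i(x,t,U)$ in $\bqq$, and since $U_k\equiv0$ for $k\in{\cal I}$ while $U_k>0$ for $k\in{\cal I}^c$, a first-order expansion/mean value argument in the variables indexed by ${\cal I}^c$ shows $\partial_{u_k}f_i(x,t,\xi)=0$ along the relevant segment for some intermediate $\xi$, and by exploiting the cooperativity (each term $\ge0$, sum $=0$, so each term $=0$) one deduces that the transition map cannot connect ${\cal I}$ to ${\cal I}^c$; making this compatible with the irreducibility at the single point $(x_0,t_0)$ from {\bf(F2)} — rather than everywhere — requires the same integrated-coefficient trick as in the proof of Theorem \ref{th3.2}, namely one shows $\tilde p_{ik}=0$ for all $i\in{\cal I},k\in{\cal I}^c$, contradicting irreducibility of $(\tilde p_{ik})$. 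Thus ${\cal I}^c=\emptyset$ is impossible unless ${\cal I}=\sss$, completing the proof. The delicate point throughout is passing from the pointwise, ``for all $u$'' irreducibility at one point in {\bf(F2)} to the averaged irreducibility of $(\tilde p_{ik})$; modelling the argument on the proof of Theorem \ref{th3.2} (and, for the time-average, on Lemma \ref{lem:SP}) is the way through.
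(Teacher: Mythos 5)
Your overall strategy coincides with the paper's: write $f_i(x,t,U)=\sum_{k=1}^m p_{ik}(x,t)U_k$ with $p_{ik}(x,t)=\int_0^1\partial_{u_k}f_i(x,t,sU(x,t))\,{\rm d}s$, note that $(p_{ik})$ is cooperative and bounded, and feed the resulting linear time-periodic inequality into the strong maximum principle, Theorem \ref{th3.2}. The gap is in your verification of its irreducibility hypothesis. From {\bf(F2)} you isolate pairs $(i,k)$ with $\partial_{u_k}f_i(x_0,t_0,u^*)>0$ for \emph{some} $u^*\ge0$ and claim that each such pair gives $p_{ik}>0$ on a set of positive measure, hence $\tilde p_{ik}>0$. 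That inference fails: $p_{ik}(x,t)$ only samples $\partial_{u_k}f_i$ along the segment $\{sU(x,t):0\le s\le1\}$, and $u^*$ need not lie on or near that segment (the derivative could vanish identically in a neighborhood of the segment and be positive only at large $u$, so $p_{ik}\equiv0$ even though $(i,k)$ is in your ``union support''). Your argument also leans on ``$U\gg0$ near $x_0$'', which is not available — it is part of what is being proved — and this is what pushes you into the final case analysis on vanishing components, which amounts to a hand-waved re-proof of Theorem \ref{th3.2} rather than an application of it (that dichotomy is exactly its conclusion, so no separate bookkeeping is needed once its hypotheses are checked).

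The correct use of {\bf(F2)} (and the paper's) exploits that the Jacobian is irreducible at \emph{every} $u\ge0$, in particular at every point $sU(x,t)$ of the segment, and upgrades the single point $(x_0,t_0)$ to a set of positive measure by compactness in $u$ rather than by positivity of $U$: take $M>\max_i\|U_i\|_{L^\infty(Q_T)}$; since a cooperative irreducible matrix stays irreducible under small perturbations (its strictly positive off-diagonal entries remain positive), continuity of $\partial_{u_k}f_i$ and compactness of $\{u\in\mathbb{R}^m_+:u\le M\}$ yield $\delta>0$ such that $\big(\partial_{u_k}f_i(x,t,u)\big)_{m\times m}$ is irreducible for all $(x,t)\in B_\delta((x_0,t_0))\cap\bqq$ and all $0\le u\le M$. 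Hence for a.e.\ such $(x,t)$ the support of $(p_{ik}(x,t))$ contains the strongly connected support of $\big(\partial_{u_k}f_i(x,t,sU(x,t))\big)$ at any fixed $s$, so $(p_{ik}(x,t))$ is irreducible there; then for any splitting $\sss=\mathbb{I}\cup\mathbb{K}$ some entry with $i\in\mathbb{I}$, $k\in\mathbb{K}$ is positive on a positive-measure subset of that ball, giving $\tilde p_{ik}>0$, i.e.\ $(\tilde p_{ik})$ is irreducible, and Theorem \ref{th3.2} applies directly (the boundary condition holds since $U_i(x,0)=U_i(x,T)$). Note finally that $U\in\mathbb{Z}^m$ is only $L^\infty$ in $x$, so $p_{ik}$ need not be continuous in $x$; arguments of the form ``positive at a point, hence on a neighborhood'' must be replaced by the uniform-in-$u$ statement above, with measurability sufficing for the averages.
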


\begin{proof} Noticing that $U_i$ satisfies
\bess\begin{aligned}
U_{it}&=d_i\dd\int_\oo J_i(x,y)U_i(y,t)\dy-d^*_i(x)U_i(x,t)+f_i(x,t,U)- f_i(x,t,0)\\
	&=d_i\dd\int_\oo J_i(x,y)U_i(y,t)\dy-d^*_i(x)U_i(x,t)+\sum_{k=1}^mp_{ik}(x,t)U_k(x,t),
	\end{aligned}\eess
where
 \[p_{ik}(x,t)=\int_0^1\partial_{u_k}f_i(x,t, sU(x,t)){\rm d}s\ge 0.\]
By {\bf(F2)}, there exist $\delta>0$ and $M>\sup_{i\in\sss}\sup_{\bqq} U_i(x,t)$ such that $(\partial_{u_k}f_i(x,t,u))_{m\times m}$ is irreducible for all $(x,t,u)\in [B_{\delta}((x_0,t_0))\cap\bqq]\times[0,M]$. Therefore,
and $(p_{ik}(x,t))_{m\times m}$ is irreducible for all $(x,t)\in B_{\delta}((x_0,t_0)) \cap \bqq$. According to Theorem \ref{th3.2}, we can get the desired results.\zzz
\end{proof}

Now we use Theorem \ref{th3.3} to derive the uniqueness of bounded positive solutions of \qq{3.1}.\zzz

\begin{theo}\lbl{th3.4}{\rm(Uniqueness of bounded positive solutions)} Assume that {\bf(F1)}--{\bf(F3)} hold. Then \qq{3.1} has at most one bounded positive solution $U$ in $\mathbb{Z}^m$ {\rm(}no need for continuity with respect to $x)$.\zzz
\end{theo}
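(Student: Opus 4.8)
\smallskip
\noindent\emph{Proof proposal.} The plan is to run the classical sliding (scaling) argument for strictly subhomogeneous cooperative problems, but performed componentwise so that only the regularity of $\mathbb{Z}^m$ (bounded in $x$, $C^1$ in $t$) is needed. I would let $U,V\in\mathbb{Z}^m$ be two bounded positive solutions of \qq{3.1}; by Theorem \ref{th3.3} (which uses {\bf(F1)}--{\bf(F2)}) neither is trivial, so $\inf_{Q_T}U_i>0$ and $\inf_{Q_T}V_i>0$ for every $i\in\sss$, and together with boundedness there is $M_0>0$ with $0<U_i,V_i\le M_0$ on $\bqq$. Then I would set
\[\rho^*=\sup\{\rho\in(0,1]:\ \rho U\le V\ \text{in}\ \bqq\};\]
these uniform bounds make this set a nonempty closed subinterval of $(0,1]$, so $\rho^*\in(0,1]$ and $\rho^*U\le V$. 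It suffices to show $\rho^*=1$: then $U\le V$, and interchanging $U$ and $V$ yields $V\le U$, hence $U\equiv V$.

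\smallskip
Assuming $\rho^*<1$ for contradiction, I would put $Z:=V-\rho^*U\ge0$, again in $\mathbb{Z}^m$. Splitting $f_i(x,t,V)-\rho^*f_i(x,t,U)=\big[f_i(x,t,V)-f_i(x,t,\rho^*U)\big]+\big[f_i(x,t,\rho^*U)-\rho^*f_i(x,t,U)\big]$ and using the fundamental theorem of calculus on the first bracket, $Z$ solves, for each $i\in\sss$ and all $(x,t)\in Q_T$,
\[Z_{it}=d_i\dd\int_\oo J_i(x,y)Z_i(y,t)\dy-d_i^*(x)Z_i+\sum_{k=1}^{m}q_{ik}(x,t)Z_k+g_i(x,t),\]
where $q_{ik}(x,t)=\int_0^1\partial_{u_k}f_i\big(x,t,(1-s)\rho^*U(x,t)+sV(x,t)\big)\,{\rm d}s$ and $g_i(x,t)=f_i(x,t,\rho^*U(x,t))-\rho^*f_i(x,t,U(x,t))$. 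By {\bf(F1)} the integrand is nonnegative for $k\ne i$, so $q_{ik}\ge0$ for $k\ne i$ and $q_{ii}\in L^\yy(Q_T)$; by {\bf(F3)}, with $\rho=\rho^*\in(0,1)$ and $u=U(x,t)\gg0$, we get $g_i\ge0$ for all $i$. Discarding the nonnegative terms $\sum_{k\ne i}q_{ik}Z_k$ and $g_i$, each $Z_i$ meets the hypotheses of Lemma \ref{le3.3} with $p=q_{ii}$ and $Z_i(x,0)=Z_i(x,T)$, so for every $i$ either $Z_i\equiv0$ in $\bqq$, or $Z_i>0$ in $\bqq$ with $\inf_{Q_T}Z_i>0$. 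I would then set $\mathcal{I}=\{i\in\sss:Z_i\equiv0\ \text{in}\ \bqq\}$.

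\smallskip
The crux is to rule out $\mathcal{I}\ne\emptyset$. If $\mathcal{I}=\sss$, then $Z\equiv0$, hence $g_i\equiv0$ for all $i$, i.e. $f(x,t,\rho^*U)=\rho^*f(x,t,U)$ in $\bqq$, contradicting {\bf(F3)}. If $\emptyset\ne\mathcal{I}\subsetneq\sss$, then for $i\in\mathcal{I}$ the displayed identity reduces to $0=\sum_{k\in\mathcal{I}^c}q_{ik}Z_k+g_i$ with all summands nonnegative and $Z_k>0$ for $k\in\mathcal{I}^c$, so $q_{ik}\equiv0$ on $Q_T$ for all $i\in\mathcal{I}$, $k\in\mathcal{I}^c$; since the $s$-integrand defining $q_{ik}$ is nonnegative and continuous in $s$, this forces $\partial_{u_k}f_i\big(x,t,(1-s)\rho^*U(x,t)+sV(x,t)\big)=0$ for all $s\in[0,1]$, all $i\in\mathcal{I}$, $k\in\mathcal{I}^c$, and all $(x,t)\in Q_T$. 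Picking any $(x,t)\in B_\delta((x_0,t_0))\cap Q_T$ and $s=1/2$, the matrix $\big(\partial_{u_l}f_j(x,t,w(x,t))\big)_{m\times m}$ with $w(x,t)=(\rho^*U(x,t)+V(x,t))/2\in[0,M_0]^m$ becomes reducible through the partition $(\mathcal{I},\mathcal{I}^c)$; but, exactly as in the proof of Theorem \ref{th3.3} (where the pointwise irreducibility of {\bf(F2)} is propagated to a neighbourhood of $(x_0,t_0)$ and to a compact box of $u$-values by continuity of $\partial_{u_k}f_i$), that matrix is irreducible --- a contradiction. Hence $\mathcal{I}=\emptyset$, so $c:=\min_{i\in\sss}\inf_{Q_T}Z_i>0$, and with $\eta=\min(c/M_0,\,1-\rho^*)>0$ one gets $(\rho^*+\eta)U\le\rho^*U+c\le\rho^*U+Z=V$ and $\rho^*+\eta\le1$, contradicting the maximality of $\rho^*$. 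Therefore $\rho^*=1$, which finishes the argument.

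\smallskip
The hard part will be this last partition step in the low-regularity setting: because $U,V$ are only $L^\infty$ in $x$, one cannot invoke the vector strong maximum principle (Theorem \ref{th3.2}) directly, nor the convenient feature of Theorem \ref{th3.3} that the linearization segment reaches $u=0$ (where {\bf(F2)} applies most easily); the contradiction must instead be extracted from irreducibility of $\big(\partial_{u_k}f_i(x,t,u)\big)$ at the \emph{interior} points $u=(1-s)\rho^*U(x,t)+sV(x,t)$ near $(x_0,t_0)$, which is exactly why the scalar Lemma \ref{le3.3} is used componentwise and $\mathcal{I}$ is then analyzed by hand.
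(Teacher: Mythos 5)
Your proposal is correct, and it shares the paper's overall scheme: the scaling supremum $\rho^*$, the integral coefficients obtained from the fundamental theorem of calculus, the positive infima of $U,V$ from Theorem \ref{th3.3}, and the componentwise dichotomy of the scalar Lemma \ref{le3.3}. Where you genuinely diverge is in how the contradiction at $\rho^*<1$ is closed. The paper builds the extremal information into the definition of $\bar s=\rho^*$: by maximality there is a component $\bar i$ with $\inf_{Q_T}(V_{\bar i}-\bar s U_{\bar i})=0$, so Lemma \ref{le3.3} forces $W_{\bar i}\equiv 0$, and this is played off against the strict subhomogeneity inequality, which at every point is strict in some component and, through Corollary \ref{col4.1}, makes that component of $W$ positive; no bump of $\bar s$ and no classification of the remaining components is needed (though the paper's final step is quite terse about how irreducibility of $(a_{ik})$ enters). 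You instead run the partition argument in the style of the strong maximum principle (Theorem \ref{th3.2}): split $\sss$ into $\mathcal I=\{i:Z_i\equiv 0\}$ and its complement, eliminate $\mathcal I=\sss$ with {\bf(F3)}, eliminate $\emptyset\ne\mathcal I\subsetneq\sss$ by showing the zero block $q_{ik}\equiv 0$ forces $\partial_{u_k}f_i$ to vanish along the linearization segment and hence yields a reducible Jacobian near $(x_0,t_0)$, contradicting the neighbourhood/compact-box propagation of {\bf(F2)} that the paper itself performs in the proof of Theorem \ref{th3.3}, and finally raise $\rho^*$ by $\min(c/M_0,\,1-\rho^*)$ to contradict maximality. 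Your endgame costs the extra uniform-positivity and bumping step but makes the use of irreducibility completely explicit and does not require singling out the extremal index; your symmetrization at $\rho^*=1$ is also the careful version of what the paper asserts directly. The point you flagged as the ``hard part'' is indeed the right one, and it is adequately handled by the same tube-lemma propagation of pointwise irreducibility already invoked in Theorem \ref{th3.3}, so no gap remains.
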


\begin{proof} Let $U,V\in \mathbb{Z}^m$ and $U, V\gg 0$ in $\ol Q_T$ be two solutions  of \qq{3.1}. By Theorem \ref{th3.3}, $\inf_{\ol Q_T}U_i>0$ and  $\inf_{\ol Q_T}V_i>0$ for all $i\in\sss$.

As $U,V\in[L^\yy(Q_T)]^m$, we can find $0<s\ll 1$ such that $V\ge sU$ in $\ol Q_T$.
Hence, the set
 \[\Sigma=\kk\{0<s\le 1: V(x,t)\ge sU(x,t),\;\;\forall\,(x,t)\in\ol Q_T\rr\}\]
is nonempty. So $\bar{s}:=\sup\Sigma$ exists, and $V(x,t)\ge\bar sU(x,t)$ in $\ol Q_T$. Moreover, there exists $\bar i$ such that $\inf_{Q_T}(V_{\bar i}(x,t)-\bar sU_{\bar i}(x,t))=0$.

If $\bar{s}=1$, then $U(x,t)=V(x,t)$ in $\ol Q_T$. The proof is complete. 	
Assume that $\bar{s}<1$. Since $f$ is strictly subhomogeneous, we have
 \[f(x,t,\overline{s}U(x,t))>\overline{s}f(x,t,U(x,t)), \;\; (x,t)\in\overline Q_T.\]
Write $W(x,t)=(W_1(x,t),\cdots,W_m(x,t))= V(x,t)-\overline{s}U(x,t)$, and define
 \[a_{ik}(x,t)=\int_0^1\partial_{u_k}f_i(x,t,\bar s\ol U(x,t)
 +\tau W(x,t)){\rm d}\tau\ge 0.\]
Then $(a_{ik}(x,t))_{m\times m}$ is irreducible at the neighborhood of $(x_0,t_0)$ by the condition {\bf(F2)}. Moreover,
 \bes
W_{it}&=&d_i\dd\int_\oo J_i(x,y)W_i(y,t)\dy-d^*_i(x)W_i+f_i(x,t, V)- \overline{s}f_i(x,t,U)\nm\\
	&\ge& d_i\dd\int_\oo J_i(x,y)W_i(y,t)\dy-d^*_i(x)W_i+f_i(x,t, V)- f_i(x,t,\overline{s}U)\lbl{3c.10}\\
	&=&d_i\dd\int_\oo J_i(x,y)W_i(y,t)\dy-d^*_i(x)W_i+\sum_{k=1}^ma_{ik}(x,t)W_k\nm\\
&\ge& d_i\dd\int_\oo J_i(x,y)W_i(y,t)\dy-d^*_i(x)W_i+a_{ii}(x,t)W_i,\;\;\forall\,(x,t)\in Q_T,
	\lbl{3c.11}\ees
and for any given $(\hat x,\hat t)\in Q_T$, the strictly inequality of \qq{3c.10}  holds with $(x,t)$ replaced by $(\hat x,\hat t)$ for some $\hat i\in\sss$. Noticing that $W_{\hat i}(\hat x,0)\ge 0$, and
 \bess
 W_{\hat it}(\hat x,t)&\ge& d_{\hat i}\dd\int_\oo J_{\hat i}(\hat x,y)W_{\hat i}(y,t)\dy -d^*_{\hat i}(\hat x)W_{\hat i}(\hat x,t)
 +a_{\hat i\hat i}(\hat x,t)W_{\hat i}(\hat x,t),\;\;\forall\,t\in [0,T],\eess
and the above inequality is strict at $t=\hat{t}$, we conclude that $W_{\hat i}(\hat x,\hat t)>0$ by Corollary \ref{col4.1}. So $W_{\hat i}>0$ in $\ol Q_T$ by Lemma \ref{le3.3}. On the other hand, since $\inf_{Q_T}W_{\bar i}=0$, it deduces that $W_{\bar i}\equiv 0$ in $\ol Q_T$ by Lemma \ref{le3.3}, which is a contradiction. So $\bar s=1$ and the proof is complete.\zzz
\end{proof}

In the following of this section we always assume that {\bf(F1)}--{\bf(F3)} hold.

\subsection{Upper and lower solutions method for \qq{3.1}}

\begin{defi} A function $U\in \mathbb{X}^m$ with $U\ge 0$ is called an upper solution $($a lower solution$)$ of \qq{3.1} if
 \bes\begin{cases}
 U_{it}\ge\, (\le)\, d_i\dd\int_\oo J_i(x,y)U_i(y,t)\dy-d_i^*(x)U_i+f_i(x,t, U),\;\;&(x,t)\in Q_T,\\[1mm]
 U(x,0)\ge \,(\le) \,U(x,T),&x\in\boo,\\
i=1,\cdots,m.
 \end{cases}\lbl{3c.5}\ees
\end{defi}\www

\begin{theo}\lbl{th3.5} Assume that $\ud U,\ol U\in \mathbb{X}^m$ with $0\le\ud U\le\ol U$ are the upper and lower solutions of \qq{3.1}, respectively. Then \qq{3.1} has a unique bounded positive solution $U$, and $U\in[C(\ol Q_T)]^m$ and satisfies $\ud U\le U\le\ol U$.
\zzz\end{theo}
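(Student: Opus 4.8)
The plan is to construct the solution of \qq{3.1} by a monotone iteration on the time-periodic problem, squeezed between the lower solution $\ud U$ and the upper solution $\ol U$, recovering $U$ as the common pointwise limit of a decreasing and an increasing sequence of iterates. First I would fix $K>0$ so large that, for each $i$, the map $u\mapsto f_i(x,t,u)+Ku_i$ is nondecreasing in every component of $u$ on $[0,\ol U]$ (possible since $f\in C^{0,0,1}$ is cooperative and $\ol U$ is bounded), and so large that each autonomous bounded operator $A_i\phi:=d_i\int_\oo J_i(x,y)\phi(y)\dy-(d_i^*(x)+K)\phi$ on $C(\boo)$ has negative spectral bound (elementary: testing with the constant function gives $s(A_i)\le d_i-K$, using $\int_\oo J_i(x,y)\dy\le1$ and $d_i^*\ge0$). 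For such $K$ and any $h\in[C(\ol Q_T)]^m$, the decoupled scalar linear periodic problem
\[V_{it}=d_i\dd\int_\oo J_i(x,y)V_i(y,t)\dy-(d_i^*(x)+K)V_i+h_i(x,t),\qquad V_i(x,0)=V_i(x,T),\]
has a unique solution $V\in\mathbb{X}^m$, given by a Duhamel formula in which both ${\rm e}^{tA_i}$ and $(I-{\rm e}^{TA_i})^{-1}=\sum_{n\ge0}{\rm e}^{nTA_i}$ are positive operators; in particular this linear periodic problem obeys a comparison principle. I would then set $U^{(0)}=\ol U$, $V^{(0)}=\ud U$ and let $U^{(n+1)},V^{(n+1)}\in\mathbb{X}^m$ solve the above problem with $h_i=f_i(x,t,U^{(n)})+KU^{(n)}_i$ and $h_i=f_i(x,t,V^{(n)})+KV^{(n)}_i$, respectively.

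The monotonicity step is then routine: since $\ol U$ satisfies \qq{3c.5} it is a supersolution of the linear periodic problem defining $U^{(1)}$, so $U^{(1)}\le\ol U$ by comparison; likewise $\ud U\le V^{(1)}$; and because $u\mapsto f_i+Ku_i$ is nondecreasing one gets inductively $V^{(n)}\le V^{(n+1)}\le U^{(n+1)}\le U^{(n)}$, all lying in $[\ud U,\ol U]$. Hence the pointwise limits $\ol U^*:=\lim_n U^{(n)}$ and $\ud U^*:=\lim_n V^{(n)}$ exist in $[L^\infty(Q_T)]^m$ and satisfy $\ud U\le\ud U^*\le\ol U^*\le\ol U$.

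The crux --- the point where the absence of compactness must be confronted --- is to show that these $L^\infty$-limits are genuine solutions and that the limit is in fact continuous. I would pass to the limit in the Duhamel identities: since $U^{(n)}\to\ol U^*$ pointwise with uniform bounds and $f(x,t,\cdot)$ is continuous, dominated convergence gives that $\ol U^*$ satisfies the same integral identity with $h_i$ replaced by $f_i(x,t,\ol U^*)+K\ol U^*_i$; reading this identity backwards shows that $\ol U^*_i(x,\cdot)\in C^1([0,T])$ for every $x$, so $\ol U^*\in\mathbb{Z}^m$ is a $T$-periodic solution of \qq{3.1}, and likewise $\ud U^*$. Since $\ud U^*\ge\ud U$ is nontrivial, Theorem \ref{th3.3} makes $\ud U^*$ --- hence also $\ol U^*\ge\ud U^*$ --- positive, and the uniqueness Theorem \ref{th3.4} then forces $\ud U^*=\ol U^*=:U$; this $U$ is therefore the unique bounded positive solution of \qq{3.1} and satisfies $\ud U\le U\le\ol U$.

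Finally, $U$ is simultaneously the decreasing pointwise limit of the continuous functions $U^{(n)}$ --- hence upper semicontinuous on $\ol Q_T$ --- and the increasing pointwise limit of the continuous functions $V^{(n)}$ --- hence lower semicontinuous --- so $U\in[C(\ol Q_T)]^m$; Dini's theorem then upgrades both monotone convergences to uniform convergence on $\ol Q_T$, in the form needed later for Theorem \ref{th3.6}. I expect the two genuinely delicate points to be (i) verifying that the monotone $L^\infty$-limit is a mild solution that is $C^1$ in $t$, so that the strong maximum principle (Theorem \ref{th3.3}) and the uniqueness theorem (Theorem \ref{th3.4}) apply to it; and (ii) the observation that continuity of $U$ is not produced by any regularizing effect of the nonlocal operator but is forced by squeezing the one-sidedly semicontinuous limits $\ud U^*$ and $\ol U^*$ against one another.
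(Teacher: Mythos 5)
Your argument is correct, but it follows a genuinely different route from the paper's. The paper never linearizes: it iterates the nonlinear \emph{initial value} problem \eqref{3c.13}, feeding the time-$T$ value of each solution back as the next initial datum (essentially iterating the Poincar\'e map), and gets monotonicity of the sequences $\{\ud U^n\}$, $\{\ol U^n\}$ from the nonlinear comparison principle (Theorem \ref{t3c.1}) applied to ordered initial data. You instead run the classical monotone scheme for periodic problems: shift by $Ku$ to make $f_i+Ku_i$ monotone, and at each step solve a decoupled \emph{linear} time-periodic problem via Duhamel and the Neumann series $(I-{\rm e}^{TA_i})^{-1}=\sum_{n\ge0}{\rm e}^{nTA_i}$, whose convergence and positivity you get from $s(A_i)\le d_i-K<0$ (your Collatz--Wielandt-type test with $\phi\equiv 1$ is exactly the mechanism of Lemma \ref{le2.2}(2), so this is sound, and the same positivity gives the one-sided comparison needed for supersolutions with $\ol U(x,0)\ge\ol U(x,T)$). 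What your approach buys is that each iterate is explicitly a solution of a linear problem, so continuity and $C^1$-in-$t$ regularity of the iterates are immediate and the nonlinear IVP theory is not needed; what the paper's approach buys is that no auxiliary constant $K$ or linear periodic resolvent is required, only solvability and comparison for \eqref{z1.3}-type initial value problems, which are already in place. The endgame is identical in both: the monotone $L^\infty$-limits are identified as mild solutions (hence in $\mathbb{Z}^m$, differentiable in $t$) by dominated convergence in the integral identity, upgraded to positive solutions by Theorem \ref{th3.3}, forced to coincide by Theorem \ref{th3.4}, and continuity is squeezed out of the pair of one-sidedly semicontinuous limits exactly as you describe (your Dini remark is extra, and is the paper's mechanism for \eqref{3c.15} later). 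The only caveat, shared with the paper's own proof, is that the conclusion as a \emph{positive} solution implicitly requires $\ud U\not\equiv 0$, since Theorem \ref{th3.3} is what rules out the limit being the trivial solution.
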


\begin{proof} We first consider the initial value problem
\bes\begin{cases}
  U_{it}=d_i\dd\int_\oo J_i(x,y)U_i(y,t)\dy-d_i^*(x)U_i+f_i(x,t,U),\;\;&x\in Q_T,\\[1mm]
 U_i(x,0)=\ud U_i(x,T),&x\in\ol\oo,\\
 i=1,\cdots,m.
 \end{cases}\lbl{3c.13}\ees
It is easy to see that $\ol U$ and $\ud U$ are the upper and lower solutions of  \qq{3c.13}, respectively. So, \qq{3c.13} has a unique positive solution $\ud U^1\in[C(\ol Q_T)]^m$ and $\ud U\le\ud U^1\le\ol U$ in $\ol Q_T$. Certainly,
 \[\ud U(x,0)\le\ud U(x,T)\le\ud U^1(x,T)\le\ol U(x,T)\le\ol U(x,0),\;\;x\in\ol\oo.\]
Of course, $\ol U$ and $\ud U$ are, respectively, the upper and lower solutions of
the following initial value problem
 \bess\begin{cases}
  U^2_{it}=d_i\dd\int_\oo J_i(x,y)U^2_i(y,t)\dy-d_i^*(x)U^2_i+f_i(x,t,U^2),\;\;&x\in Q_T,\\[1mm]
 U^2_i(x,0)=\ud U^1_i(x,T),&x\in\ol\oo,\\
i=1,\cdots,m.
 \end{cases}\eess
Hence the above problem has a unique solution $\ud U^2\in[C(\ol Q_T)]^m$ and $\ud U\le\ud U^2\le\ol U$ in $\ol Q_T$. Since $\ud U^2(x,0)=\ud U^1(x,T)\ge\ud U(x,T)=\ud U^1(x,0)$, the comparison principle gives $\ud U^2\ge\ud U^1$. Define $\ud U^2={\mathscr P}(\ud U^1)$ and $\ud U^{n+1}={\mathscr P}(\ud U^n)$ for $n\ge 2$. Then $\ol U$ and $\ud U$ are also the upper and lower solutions of the problem solving $\ud U^{n+1}$. Therefore, $\ud U\le\ud U^{n+1}\le\ol U$ in $\ol Q_T$. On the other hand, Using $\ud U^{n+1}(x,0)=\ud U^{n}(x,T)\ge \ud U^{n-1}(x,T)$ we have
 \[\ud U(x,t)\le \ud U^{n}(x,t)\le\ud U^{n+1}(x,t)\le\ol U(x,t),\;\;\forall\,(x,t)\in\ol Q_T,\; n\ge 1.\]
Consequently, there exists a function $\widetilde U$ satisfying $\ud U\le\widetilde U\le\ol U$ in $\ol Q_T$ such that $\dd\lim_{n\to+\yy}\ud U^{n+1}(x,t)=\widetilde U(x,t)$ for all $(x,t)\in\ol Q_T$. It is clear that $\widetilde U$ is semi-lower continuous and
 \[\widetilde U(x,0)=\lim\limits_{n\rightarrow +\infty} \ud U^{n+1}(x,0)= \lim\limits_{n\rightarrow +\infty}\ud U^{n}(x,T)=\widetilde U(x,T).\]
Write the differential equation of $\ud U^n$ as an integral equation we have
 \[\ud U^n(x,t)=\ud U^{n-1}(x,T)+\int_0^t\kk(d_i\dd\int_\oo J_i(x,y)\ud U^n_i(y,s)\dy-d_i^*(x)\ud U^n_i(x,s)+f_i(x,s,\ud U^n)\rr){\rm d}s.\]
Take $n\to+\yy$ and use the dominated convergence theorem to derive that $\widetilde U(x,t)$ satisfies
 \[\widetilde U(x,t)=\widetilde U(x,T)+\int_0^t\kk(d_i\dd\int_\oo J_i(x,y)\widetilde U_i(y,s)\dy-d_i^*(x)\widetilde U_i(x,s)+f_i(x,s,\widetilde U)\rr){\rm d}s.\]
This implies that $\widetilde U\in \mathbb{Z}^m$ is a bounded positive solution of \qq{3.1}.

Replacing $\ud U_i(x,T)$ by $\ol U_i(x,T)$ and repeating the above discussions we can construct a monotone decreasing sequence $\{\ol U^{n}\}$ satisfying $\ol U^{n}\in[C(\ol Q_T)]^n$ and
 \[\ud U(x,t)\le \ol U^{n+1}(x,t)\le\ol U^{n}(x,t)\le\ol U(x,t),\;\;\forall\,(x,t)\in\ol Q_T,\; n\ge 1.\]
Moreover, there exists a function $\widehat U$ satisfying $\ud U\le\widehat U\le\ol U$ in $\ol Q_T$ such that $\dd\lim_{n\to+\yy}\ol U^{n+1}(x,t)=\widehat U(x,t)$ for all $(x,t)\in\ol Q_T$. It is clear that $\widehat U$ is semi-upper continuous and
 \[\widehat U(x,0)=\widehat U(x,T).\]
Similar to the above, $\widehat U\in \mathbb{Z}^m$ is a bounded positive solution of \qq{3.1}. Hence, $\widehat U=\widetilde U:=U$ by Theorem \ref{th3.4}. Of course, $U\in[C(\ol Q_T)]^m$ since $\widetilde U$ and $\widehat U$ are semi-lower and semi-upper continuous, respectively. Make use of Theorem \ref{th3.4} again, the bounded positive solution of \qq{3.1} is unique.\www\end{proof}

\subsection{The global dynamics of \qq{z1.3}}

We are now in a position to prove our second main result.\www

\begin{proof}[Proof of Theorem \ref{th3.6}]  As in the introduction, we set $b_{ik}(x,t)=\frac{\partial}{\partial u_k}f_i(x,t,0)$, and \[\ell_{ii}(x,t)=b_{ii}(x,t)-d_i^*(x),\;\;\ell_{ik}(x,t)=b_{ik}(x,t),\;\; k\not=i.\]  Let ${\mathscr L}$ be defined by \qq{1.2}. Then $L(x_0, t_0)=(\ell_{ik}(x_0,t_0))_{m\times m}$ is also irreducible, and the condition {\bf (L2)} holds. Therefore, Theorem \ref{t2.1} is true.

(1) Assume that $\lm(\mathscr{L})>0$ and \qq{3.5} holds.

{\it Step 1: The existence and uniqueness of continuous positive solution of \qq{3.1}}. Let $\ud L^\ep(x,t)$ be the lower control matrix of $L(x,t)$ as in Theorem \ref{t2.1}. Then $\lm_p(\ud{\mathscr L}^\ep)\to \lm(\mathscr{L})$ as $\ep\to 0^+$, so $\lm_p(\ud{\mathscr L}^\ep)>0$ when $0<\ep\ll1$. Let $\phi(x,t)$ with $\|\phi\|_{L^\yy(Q_T)}=1$ be a positive eigenfunction corresponding to $\lm_p(\ud{\mathscr L}^\ep)$. Set $\ud U=\rho\phi$ with $0<\rho\ll 1$ being determined later. Noticing that
 \bess
 f_i(x,t,\rho\phi)=f_i(x,t,\rho\phi)-f_i(x,t,0)=
 \sum_{k=1}^m\kk(\int_0^1\partial_{u_k}f_i(x,t, s\rho\phi){\rm d}s\rr)\rho\phi_k=:\sum_{ k=1}^mb_{ik}^\rho(x,t)\rho\phi_k,
 \eess
and $b_{ik}^\rho\to b_{ik}$ uniformly in $\ol Q_T$ as $\rho\to 0$. Then there exists $\rho_0>0$ such that
 \[\lm_p(\ud{\mathscr L}^\ep)\phi_i+\sum_{k=1}^m[b_{ik}^\rho(x,t)-b_{ik}(x,t)]\phi_k>0,
 \;\;(x,t)\in\ol Q_T\]
provided $0<\rho\le\rho_0$ because of $\lm_p(\ud{\mathscr L}^\ep)>0$ and $\phi$ is positive and continuous in $\ol Q_T$. Therefore,
 \bess
 &&d_i\dd\int_\oo J_i(x,y)\ud U_i(y,t)\dy-d^*_i(x)\ud U_i+f_i(x,t,\ud U)-\ud U_{it}\\
 &=&\rho d_i\dd\int_\oo J_i(x,y)\phi_i(y,t)\dy-\rho d^*_i(x)\phi_i
  +\sum_{k=1}^mb_{ik}^\rho(x,t)\rho\phi_k-\rho\phi_{it}\\
 &=&\rho d_i\dd\int_\oo J_i(x,y)\phi_i(y,t)\dy+\rho\sum_{k=1}^m\ud\ell^\ep_{ik}(x,t)\phi_k-\rho\phi_{it}+\rho \sum_{k=1}^m\kk\{(\ell_{ik}-\ud \ell_{ik}^\ep)\phi_k
 +(b_{ik}^\rho-b_{ik})\phi_k\rr\}\\
 &\ge&\rho \lm_p(\ud{\mathscr L}^\ep)\phi_i+\rho\sum_{k=1}^m[b_{ik}^\rho(x,t)
 -b_{ik}(x,t)]\phi_k\\
 &>&0,\;\;(x,t)\in\ol Q_T,\; 0<\rho\le\rho_0.
  \eess
Certainly, $\ud U\le\ol U$ in $\ol Q_T$ when $0<\rho\ll1$. By Theorem \ref{th3.5}, the system \eqref{3.1} has a unique positive solution $U$, and $U\in[C(\ol Q_T)]^m$ and  $\ud U\le U\le\ol U$ in $\ol Q_T$.

{\it Step 2: The proof of \qq{3c.15}.} Notice that $u(x,t;u_0)$ is continuous
in $\boo\times[0,+\yy)$ and $u(x,t;u_0)\gg 0$ in $\boo\times(0,+\yy)$. Without loss of generality we may assume that $u_0(x)\gg 0$ in $\boo$. Then there exist $0<\rho\le\rho_0$ and $\gamma>1$ such that
 \[\ud U(x,0):=\rho\phi(x,0)\le u_0(x)\le\gamma U(x,0),\;\;x\in\boo.\]
As $f$ is strictly subhomogeneous, we have $f(x,t,\gamma u)<\gamma f(x,t,u)$ for all $x\in\boo\times[0,+\yy)$, $u\gg 0$ and $\gamma>1$. Remind this fact, it is easy to verify that $\gamma U$ still satisfies \qq{3.5}.

On the other hand, by the comparison principle,
 \bes
 V(x,t):=u(x,t;\ud U(x,0))\le u(x,t;u_0)\le u(x,t;\gamma U(x,0))=:W(x,t)
  \lbl{3c.18}\ees
in $\boo\times[0,+\yy)$, and
\bess
 \ud U(x,t)\le V(x,t)\le U(x,t)\le W(x,t)\le\gamma U(x,t)
  \lbl{3c.18x}\eess
in $\ol Q_T$. For any integer $n\ge 0$, let us define
 \[V^n(x,t)=V(x,t+nT),\;\;\;(x,t)\in\overline Q_T.\]
Since $f(x,t,u)$ is time-periodic with period $T$, we see that $V^n$ satisfies
  \begin{eqnarray*}\left\{\begin{array}{lll}
 V^n_{it}=d_i\dd\int_\oo J_i(x,y)V^n_i(y,t)\dy-d^*_i(x)V^n_i+f_i(x,t,V^n)=0,\;\;&(x,t)\in Q_T,\\[1mm]
  V^n(x,0)=V(x,nT),&x\in\ol\Omega,\\
i=1,\cdots,m.
  \end{array}\right.\end{eqnarray*}
Note that
 \begin{eqnarray*}
 V(x,0)=\ud U(x,0)=\ud U(x,T)\le V(x,T)=V^1(x,0)\le U(x,T)=U(x,0),\;\;x\in\boo.\end{eqnarray*}
We can adopt the comparison principle to produce $\ud U\le V\le V^1\le U$ in $\overline Q_T$ which in turn asserts
  $$ V^1(x,0)=V(x,T)\le V^1(x,T)=V^2(x,0)\le U(x,T)=U(x,0),\;\;x\in\boo.$$
As above, $V^1\le V^2\le U$ in $\overline Q_T$.
Applying the inductive method, we can prove that $V^n$ is monotonically increasing in $n$ and $\ud U\le V^n\le U$ in $\overline Q_T$ for all $n$. Consequently, there exists a positive function $\widetilde U$ satisfying $\ud U\le\widetilde U\le U$ such that $V^n\to \widetilde U$ pointwisely in $\overline Q_T$ as $n\to+\yy$, and $\widetilde U$ is semi-lower continuous in $\ol Q_T$. Clearly, $\widetilde U(x,0)=\widetilde U(x,T)$ since $V^{n+1}(x,0)=V^n(x,T)$. Similar to the proof of Theorem \ref{th3.5} we can show that $\widetilde U \in \mathbb{Z}^m$ is a solution of \eqref{3.1}. Consequently, $\widetilde U=U$ by Theorem \ref{th3.4}.

In view of \qq{3c.18}, it follows that
 \bes
  \liminf_{n\rightarrow +\infty} u(x,t+nT;u_0)\ge \lim_{n\to+\yy}V(x,t+nT)=U(x,t) \;\;\text{ uniformly in } \; \ol Q_T.\lbl{3c.19}\ees

Similar to the above, we can prove that $W^n(x,t)=W(x, t+nT)$ is monotonically decreasing in $n$ and $\ud U\le W^n\le\gamma U$ in $\overline Q_T$ for all $n$. And $W^n\to U$ pointwisely in $\overline Q_T$ as $n\to+\infty$. Moreover,
 \bess
  \limsup_{n\rightarrow +\infty} u(x,t+nT;u_0)\le \lim_{n\to+\yy}W(x,t+nT)=U(x,t) \;\;\text{ uniformly in } \; \ol Q_T.\eess
Combining this with \qq{3c.19} and using the Dini Theorem, we see that the limit \qq{3c.15} holds.

(2) Assume that $\lm(\mathscr{L})<0$.

{\it Step 1: The nonexistence of continuous positive solution of \qq{3.1}}. Assume on the contrary that $U\in[C(\ol Q_T)]^m$ is a positive solution of \qq{3.1}. Then, for $0<\rho\ll 1$,
\bess
 f_i(x,t,\rho U)=f_i(x,t,\rho U)-f_i(x,t,0)=\sum_{k=1}^m\kk(\int_0^1\partial_{u_k}f_i(x,t, s\rho U){\rm d}s\rr)\rho U_k=:\sum_{k=1}^mb^\rho_{ik}(x,t)\rho U_k,
 \eess
and $b_{ik}^\rho\to b_{ik}$ uniformly in $\ol Q_T$ as $\rho\to 0$.
Let $L_\rho(x,t)=(\ell^\rho_{ik}(x,t))_{m\times m}$ with $\ell_{ik}^{\rho}(x,t)=b^\rho_{ik}(x,t)$ for $i\neq k$ and $\ell_{ii}^{\rho}(x,t)=b^\rho_{ii}(x,t)-d_i^*(x)$, and let $\mathscr{L}_{\rho}$ be defined by \eqref{1.2} with $\ell_{ik}$ replaced by $\ell_{ik}^{\rho}$. By the continuity, there exists $\rho_0>0$ such that $\lm(\mathscr{L}_{\rho})<0$ for all $0<\rho\le\rho_0$. Let $\ol L^\ep_\rho(x,t)$ be the upper control matrix of $L_\rho(x,t)$ as in Theorem \ref{t2.1}. Then $\lm_p(\ol{\mathscr L}^\ep_\rho)<0$ when $0<\ep\ll1$. Besides, as $f$ is strictly subhomogeneous, we have that, by \qq{3.1},
 \bess
\rho U_{it}&=&d_i\dd\int_\oo J_i(x,y)\rho U_i(y,t)\dy-d^*_i(x)\rho U_i
+\rho f_i(x,U)\\
&\le &d_i\dd\int_\oo J_i(x,y)\rho U_i(y,t)\dy-d^*_i(x)\rho U_i
+f_i(x,\rho U)\\
&=&\rho d_i\dd\int_\oo J_i(x,y) U_i(y,t)\dy-\rho d^*_i(x) U_i
+\rho\sum_{k=1}^m b^\rho_{ik}(x,t) U_k\\
&=&\rho d_i\dd\int_\oo J_i(x,y) U_i(y,t)\dy
+\rho\sum_{k=1}^m \ell^\rho_{ik}(x,t) U_k\\
&\le&\rho d_i\dd\int_\oo J_i(x,y)U_i(y,t)\dy
+\rho\sum_{k=1}^m\bar\ell^{\rho,\ep}_{ik}(x,t)U_k,\;\;\; (x,t)\in\ol Q_T,
 \eess
i.e.,
 \bess
 U_{it}\le d_i\dd\int_\oo J_i(x,y)U_i(y,t)\dy
+\sum_{k=1}^m\bar \ell^{\rho,\ep}_{ik}(x,t)U_k,\;\;\; (x,t)\in\ol Q_T,\;\, i=1,\cdots,m.
 \eess
Since $U(x,t)$ is positive in $\ol Q_T$, it follows by Lemma \ref{le2.2} that $\lm_p(\ol{\mathscr L}^\ep_\rho)\ge 0$, and we have a contradiction. So, the system \eqref{3.1} has no continuous positive solution.

{\it Step 2: The proof of \qq{3.7}.} Let $\ol L^\ep(x,t)$ be the upper control matrix of $L(x,t)$ as in Theorem \ref{t2.1} with $0<\ep\ll1 $. Since $\lm(\mathscr{L})< 0$, there exists $0<\ep\ll1$ such that $\lm_p(\ol{\mathscr L}^\ep)<0$. Let $0\ll\phi\in [C(\ol Q_T)]^m$, with $\|\phi\|_{L^\yy(Q_T)}=1$, be a positive eigenfunction corresponding to $\lm_p(\ol{\mathscr L}^\ep)$, and $0<\rho\ll 1$ being determined later. Then
 \bes
&&-\rho d^*_i(x)\phi_i(x,t)+f_i(x,t,\rho\phi(x,t))\nm\\
&=&-\rho d^*_i(x)\phi_i(x,t)+\rho\sum_{k=1}^m h^\rho_{ik}(x,t)\phi_k(x,t)\nm\\
 &=&\rho\sum_{k=1}^m\kk\{\bar\ell^\ep_{ik}(x,t)+
 [\ell_{ik}(x,t)-\bar\ell^\ep_{ik}(x,t)]+[h^\rho_{ik}(x,t)-b_{ik}(x,t)]\rr\}\phi_k(x,t)\nm\\
&\le&\rho\sum_{k=1}^m\bar\ell^\ep_{ik}(x,t)\phi_k(x,t)+
\rho\sum_{k=1}^m[h^\rho_{ik}(x,t)-b_{ik}(x,t)]\phi_k(x,t),
 \lbl{z3.15}\ees
where
 \[h_{ik}^\rho(x,t)=\int_0^1\partial_{u_k}f_i(x,t,s\rho\phi(x,t)){\rm d}s,\]
and $h_{ik}^\rho\to b_{ik}$ uniformly in $\ol Q_T$ as $\rho\to 0$. Choose $0<\rho_1\ll 1$ such that, for all $0<\rho\le\rho_1$, there holds:
 \[\sum_{k=1}^m[h^\rho_{ik}(x,t)-b_{ik}(x,t)]\phi_k(x,t)<-\frac 12\lm_p(\ol{\mathscr L}^\ep)\phi_i(x,t),
 \;\;(x,t)\in\ol Q_T,\; i=1,\cdots,m.\]
This combines with \qq{z3.15} allows us to derive
 \bes
 &&d_i\dd\int_\oo J_i(x,y)\rho\phi_i(y,t)\dy-d^*_i(x)\rho\phi_i+f_i(x,t,\rho\phi)-\rho\phi_{it}\nm\\
 &<&\rho d_i\dd\int_\oo J_i(x,y)\phi_i(y,t)\dy+\rho\sum_{k=1}^m\bar\ell^\ep_{ik}(x,t)\phi_k(x,t)-\rho\phi_{it}-\frac\rho 2\lm_p(\ol{\mathscr L}^\ep)\phi_i\nm\\
&=&\frac1 2\lm_p(\ol{\mathscr L}^\ep)\rho\phi_i,\;\;\; (x,t)\in\ol Q_T.
 \lbl{z3.16}\ees
Certainly, the above inequality holds in $\boo\times(0,+\yy)$ since all functions are time-periodic with period $T$.

Set $\sigma=-\frac 14\lm_p(\ol{\mathscr L}^\ep)$ and $V(x,t)=\rho{\rm e}^{-\sigma t}\phi(x,t)$.  As $\sigma>0$, we have $\rho{\rm e}^{-\sigma t}\le\rho$ for all $t\ge 0$. Of course, $V(x,t)$ satisfies, by \qq{z3.16},
 \bes
 &&d_i\dd\int_\oo J_i(x,y)V_i(y,t)\dy-d^*_i(x)V_i(x,t)+f_i(x,t,V)-V_{it}\nm\\[1mm]
 &=&d_i\dd\int_\oo J_i(x,y)V_i(y,t)\dy-d^*_i(x)V_i(x,t)+f_i(x,t,V)-\rho{\rm e}^{-\sigma t}\phi_{it}+\rho\sigma{\rm e}^{-\sigma t}\phi_i\nm\\
 &\le&\frac1 2\lm_p(\ol{\mathscr L}^\ep)\rho{\rm e}^{-\sigma t}\phi_i+\rho\sigma{\rm e}^{-\sigma t}\phi_i\nm\\
 &<&0,\;\; x\in\boo,\;t\ge 0,\, i=1,\cdots,m.\lbl{z3.17}\ees

Take $\gamma >1$ such that $u_0(x)\le \gamma\rho\phi(x,0)$ in $\boo$.
Then, by the comparison principle,
 \bes
 u(x,t;u_0)\le u(x,t;\gamma\rho\phi(x,0)),\;\;x\in\boo,\;t\ge 0.
   \lbl{3.10}\ees
Set $\bar u(x,t)=\gamma V(x,t)$. Noticing that $f$ is strictly subhomogeneous, i.e., $f(x,t,\gamma u)<\gamma f(x,t,u)$ for all $(x,t)\in\ol Q_T$, $u \gg 0$ and $\gamma>1$. Making use of \qq{z3.17} we have
 \bess
 &&d_i\dd\int_\oo J_i(x,y)\bar u_i(y,t)\dy-d^*_i(x)\bar u_i(x,t)+f_i(x,t,\bar u(x,t))-\bar u_{it}(x,t)\\
 &\le&d_i\dd\int_\oo J_i(x,y)\bar u_i(y,t)\dy-d^*_i(x)\bar u_i(x,t)+\gamma f_i(x,t,V(x,t))-\bar u_{it}(x,t)\\
 &=&\gamma\kk(d_i\dd\int_\oo J_i(x,y)V_i(y,t)\dy-d^*_i(x)V_i(x,t)+f_i(x,t,V)-V_{it}\rr)\\
 &<&0,\;\;\;x\in\boo,\;t\ge 0,\, i=1,\cdots,m.
 \eess
Since $\bar u(x,0)=\gamma \rho\phi(x,0)$, the comparison principle yields
 \[ u(x,t;\gamma\rho\phi(x,0))\le\bar u(x,t)=\gamma\rho{\rm e}^{-\sigma t}\phi(x,t),\;\;x\in\boo,\;t\ge 0.\]
This combines with \qq{3.10} implies \qq{3.7}.

(3)  Assume that $\lm(\mathscr{L})=0$.

{\it Step 1: The nonexistence of continuous positive solution of \qq{3.1}}. Assume on the contrary that $U\in[C(\ol Q_T)]^m$ is a positive solution of \qq{3.1}. Since $f$ is strongly subhomogeneous, we have $\rho f_i(x,t,U)\ll f_i(x,t,\rho U)$ in $\bqq$ for all $\rho\in (0,1)$. Fix $\rho_0$ small enough. Then there exists $\sigma>0$ such that $\rho_0 f_i(x,t,U)\le f_i(x,t,\rho_0 U)-\sigma\rho_0 U_i$ in $\ol Q_T$. Therefore,
  \bes
  \rho f_i(x,t,U)\le \frac{\rho}{\rho_0} f_i(x,t,\rho_0 U)-\sigma\rho U_i \le  f_i(x,t,\rho U)-\sigma\rho U_i,\;\;\forall\,\rho \in (0,\rho_0).
  \lbl{3.11}\ees
For any given $\delta>0$ small enough, there exists $\rho \in (0,\rho_0)$ small enough such that
 \[f_i(x,t,\rho U)=f_i(x,t,\rho U)-f_i(x,t,0) \le \rho \sum_{k=1}^m(b_{ik}(x,t) +\delta) U_k,\;\; (x,t)\in\ol Q_T.\]
It then follows that
	\bes
\rho U_{it}&=&d_i\dd\int_\oo J_i(x,y)\rho U_i(y,t)\dy-d^*_i(x)\rho U_i+\rho f_i(x,t,U)\nm\\
&\le& d_i\dd\int_\oo J_i(x,y)\rho U_i(y,t)\dy-d^*_i(x)\rho U_i
+f_i(x,t,\rho U)-\sigma\rho U_i\nm\\
&\le&  d_i\dd\int_\oo J_i(x,y)\rho U_i(y,t)\dy-d^*_i(x)\rho U_i+\rho\sum_{k=1}^m (b_{ik}(x,t)+\delta) U_k-\sigma\rho U_i,\;\; (x,t)\in\ol Q_T.\qquad
	\lbl{3.12}\ees
Consequently $\lambda(\widetilde{\mathscr L}_{\delta})\ge\sigma$ by Lemma  \ref{le2.2}, where $\widetilde{L}_{\delta}(x,t)=(\ell_{ik}(x,t)+\delta)_{m \times m}$ and $\widetilde{\mathscr L}_{\delta}$ is defined by \qq{1.2} with $L(x,t)$ replaced by $\widetilde{L}_{\delta}(x,t)$. Letting $\delta \rightarrow 0$, we can obtain that $\lm(\mathscr{L})\ge\sigma$. This contradiction indicates that \eqref{3.1} has no positive solution in $[C(\ol Q_T)]^m$.

{\it Step 2: The proof of \qq{3.8}}.

(i) \ud{The upper control function of $u(x,t;u_0)$}. We can find a constant $\gamma >1$ such that $u_0(x)\le\gamma\ol U(x,0)$ in $\boo$. Then
 \bes
 u(x,t; u_0)\le u(x,t; \gamma\ol U(x,0)),\;\;x\in\boo,\; t\ge 0.
 \lbl{3.13}\ees

(ii) \ud{The upper control function of $u(x,t; \gamma\ol U(x,0))$}. Let $\ol L^\ep(x,t)$ be the upper control matrix of $L(x,t)$ given in Theorem \ref{t2.1}. We will use $\ol L^\ep(x,t)$ to construct an upper control problem for $u(x,t; \gamma\ol U)$ to control it. Set
 \[f^\ep_i(x,t,u)=[\bar \ell_{ii}^\ep(x,t)-\ell_{ii}(x,t)]u_i+f_i(x,t,u).\]
Then $f^\ep(x,t,u)=(f^\ep_1(x,t,u),\cdots,f^\ep_m(x,t,u))$ is also cooperative, strongly subhomogeneous, and $\kk(\partial_{u_k}f^\ep_i(x_0,t_0,u)\rr)_{m\times m}$ is irreducible for all $u \ge 0$.

Recalling that $0\ll \ol U\in [C(\ol Q_T)]^m$ satisfies \qq{3.5}. In consideration of $\gamma>1$ and the strongly subhomogeneity of $f$, there exists $\ep_0>0$ small enough such that $f_i(x,t,\gamma\ol U)\le\gamma f_i(x,t,\ol U)-\ep\gamma\ol U_i$ for all $0<\ep\le\ep_0$ (refer to the derivation of \qq{3.11}). As $\bar \ell_{ii}^\ep(x,t)-\ell_{ii}(x,t)\le\ep$, we have
 \[f^\ep_i(x,t,\gamma\ol U)\le f_i(x,t,\gamma\ol U)+\ep\gamma \ol U_i\le\gamma f_i(x,t,\ol U),\;\;\;i=1,\cdots,m.\]
This together with \qq{3.5} implies that, for $0<\ep<\ep_0$,
 \bes
\gamma\ol U_{it}&\ge&d_i\dd\int_\oo J_i(x,y)\gamma\ol U_i(y,t)\dy
 -\gamma d^*_i(x)\ol U_i+\gamma f_i(x,t,\ol U)\nm\\
&\ge&d_i\dd\int_\oo J_i(x,y)\gamma\ol U_i(y,t)\dy-\gamma d^*_i(x)\ol U_i
+f^\ep_i(x,t,\gamma\ol U),\;\; (x,t)\in\ol Q_T\lbl{3.15}
 \ees
and $\gamma\ol U_i(x,0)\ge\gamma\ol U_i(x,T)$ in $\boo$ for all $i\in\sss$.

Owing to
 \bess
  b^\ep_{ii}(x,t)&:=&\partial_{u_i}f^\ep_i(x,t,0)=\bar \ell_{ii}^\ep(x,t)-\ell_{ii}(x,t)
 +\partial_{u_i}f_i(x,t,0)=\bar\ell_{ii}^\ep(x,t)+d_i^*(x),\\
  b^\ep_{ik}(x,t)&:=&\partial_{u_k}f^\ep_i(x,t,0)
 =b_{ik}(x,t)=\ell_{ik}(x,t),\;\;k\not=i,
 \eess
we have the corresponding
 \[\ell^\ep_{ii}(x,t)=b^\ep_{ii}(x,t)-d_i^*(x)=\bar \ell_{ii}^\ep(x,t),\;\;
 \ell^\ep_{ik}(x,t)=b^\ep_{ik}(x,t)=\bar\ell^\ep_{ik}(x,t),\;\;k\not=i. \]
Set $L^\ep(x,t)=(\ell^\ep_{ik}(x,t))_{m \times m}$. Then $L^\ep(x,t)=\ol L^\ep(x,t)$, and $\lm_p({\mathscr L}^\ep)=\lm_p(\ol{\mathscr L}^\ep)>\lm({\mathscr L})=0$ by Theorem \ref{t2.1}, where ${\mathscr L}^\ep$ is defined by \qq{1.2} with $L(x,t)$ replaced by $ L^\ep(x,t)$.

Taking advantage of the facts that $\lm_p({\mathscr L}^\ep)>0$ and \qq{3.15} we have that, by the conclusion (1), the problem \bes\begin{cases} \label{3.14}
U^\ep_{it}=d_i\dd\int_\oo J_i(x,y)U^\ep_i(y,t)\dy-d^*_i(x)U^\ep_i+f^\ep_i(x,t,U^\ep)=0,\;&(x,t)\in\ol Q_T,\\[1mm]
 U^\ep_i(x,0)=U^\ep_i(x,T),&x\in\boo,\\
i=1,\cdots,m
 \end{cases}\ees
has a unique positive solution $U^\ep\in [C(\ol Q_T)]^m$ satisfying   $U^\ep\le\gamma\ol U$ in $\ol Q_T$, and the solution $u^\ep(x,t;\gamma\ol U(x,0))$ of
 \bess\begin{cases}
u^\ep_{it}=d_i\dd\int_\oo J_i(x,y)u^\ep_i(y,t)\dy-d^*_i(x)u^\ep_i+f^\ep_i(x,t, u^\ep),&x\in\boo, \; t>0,\\[3mm]
u^\ep_i(x,0)=\gamma\ol U_i(x,0), &x\in\boo,\\[1mm]
 i=1,\cdots,m
 \end{cases}\eess
satisfies
 \bes
 \lim_{n\to+\yy}u^\ep(x,t+nT;\gamma\ol U(x,0))=U^\ep(x,t) \;\;\text{ uniformly in } \; \ol Q_T.\lbl{3.16}\ees
Moreover,
 \bes
 u(x,t;\gamma\ol U(x,0))\le u^\ep(x,t;\gamma\ol U(x,0)),\;\;x\in\boo,\; t\ge 0
 \lbl{3.17}\ees
by the comparison principle as $f_i\le f^\ep_i$.

(iii) \ud{Estimate of the limit $\lim_{\ep\to 0^+}U^\ep(x,t)$}. Noticing that $f^\ep$ is increasing in $\ep>0$, so is $U^\ep$ by the comparison principle. Therefore, the limit
 \[\lim_{\ep\to 0^+}U^\ep(x,t)=U(x,t)\ge 0\]
exists and is a nonnegative solution of \qq{3.1}. By Theorem \ref{th3.3},
either $U\equiv 0$ in $\ol Q_T$, or $U_i> 0$ in $\ol Q_T$ and $\inf_\oo U_i>0$ for all $i\in\sss$.

(iv) \ud{Prove $U\equiv 0$ in $\ol Q_T$}. If $ U \gg 0$ in $\ol Q_T$,
then $\theta_i:=\inf_{\ol Q_T}U_i(x,t)>0$ for all $i\in\sss$. It is deduced that
 \[U^\ep_i(x,t)\ge U_i(x,t)\ge\theta_i,\;\;\forall\,(x,t)\in\ol Q_T,\;i\in\sss,\]
where $U^\ep\in [C(\ol Q_T)]^m$ is the unique positive solution of \qq{3.14}. As $U^\ep\le\gamma\ol U$ in $\ol Q_T$, we can find $\beta_i>0$ such that $U_i^\ep\le\beta_i$ in $\ol Q_T$. Take $B=\prod_{i=1}^m[\theta_i,\beta_i]$. 
Then $U^\ep(x,t)\in B$ for all $(x,t)\in\ol Q_T$. Noticing that $f^\ep(x,t,u)$ and $f(x,t,u)$ are  strongly subhomogeneous and
 \[\lim_{\ep\to 0^+}f^\ep(x,t,u)=f(x,t,u)\;\; \mbox{uniformly in }\ol Q_T\times B.\]
If we denote $f^0_i=f_i$, then $f^\ep(x,t,u)$ is continuous in $\ep\ge 0$, $(x,t)\in\ol Q_T$ and $u\ge 0$; and
  \[f^\ep(x,t,\rho u)\gg\rho f^\ep(x,t,u),\;\;\forall\,\ep\ge 0, \; (x,t)\in\ol Q_T, \; u\gg 0, \; \rho\in (0,1).\]

For the given $0<\rho\ll 1$, the function
 \[h_i(x,t,w,\ep)=f_i^\ep(x,t, \rho w)-\rho f_i^\ep(x,t,w)\]
is continuous and positive in $\ol Q_T\times B\times[0,1]$. There exists $\gamma_i>0$ such that $h_i(x,t,w,\ep)\ge\gamma_i$ for all $(x,t,w,\ep)\in\ol Q_T\times B\times[0,1]$. Denote $U^0=U$. Then $U^\ep(x,t)\in B$ for all $(x,t)\in\ol Q_T$ and $0\le\ep\le 1$. So  $h_i(x,t,U^\ep(x,t),\ep)\ge\gamma_i$ in $\ol Q_T$ for all $0\le\ep\le 1$. Then there exists $\sigma_i>0$ such that 
$h_i(x,t,U^\ep(x,t),\ep)\ge\sigma_i\rho U^\ep_i(x,t)$ in $\ol Q_T$ 
for all $0\le\ep\le 1$. Take $\sigma=\min\{\sigma_1,\cdots,\sigma_m\}$.
Then we have $h_i(x,t,U^\ep(x,t),\ep)\ge\sigma\rho U^\ep_i(x,t)$ in $\ol Q_T$ for all $0\le\ep\le 1$, i.e.,
 \[\rho f_i^\ep(x,t,U^\ep(x,t))\le f_i^\ep(x,t,\rho U^\ep(x,t))-\sigma\rho U^\ep_i(x,t),\;\;\forall\,(x,t)\in\ol Q_T,\;0\le\ep\le 1.\]
Using $f^\ep_i$ instead of $f_i$, similar to the discussion in Step 1 we can show that \qq{3.12} holds. Then $\lambda_p(\widetilde{\mathscr L}^\ep_\delta)\ge\sigma$ for all $0\le\ep\ll 1$, where $\widetilde{\mathscr L}^\ep_\delta$ is defined by \eqref{1.2} with $\ell_{ik}(x,t)$ replaced by $\bar\ell^\ep_{ik}(x,t)+\delta$. Letting $\delta\rightarrow 0$, we can obtain that $\lm_p(\ol{\mathscr L}^\ep)=\lm_p(\widetilde {\mathscr L}^\ep)\ge\sigma$ for all $0\le\ep\ll 1$. This contradicts the fact that $\lim_{\ep\to 0^+}\lm_p(\ol{\mathscr L}^\ep)=\lm(\mathscr{L})=0$. Therefore, $U\equiv 0$ in $\ol Q_T$. This together with \qq{3.16}, \qq{3.17} and \qq{3.13} derives \qq{3.8}. The proof is complete.\zzz
\end{proof}

\begin{remark} In Theorem {\rm\ref{th3.6}(1)}, the existence of upper solution $\ol U$ of \qq{3.1} is necessary to obtain the positive solutions of \qq{3.1}. The role of strict subhomogeneity conditions is manifested in the following aspects:
\begin{enumerate}[$(1)$]\vspace{-2mm}
\item If $\ud U$ is a lower solution of \qq{3.1}, then for any $0<\rho<1$, $\rho\ud U$ is still a lower solution of \qq{3.1}; If $\ol U$ is an upper solution of \qq{3.1}, then for any $\rho>1$, $\rho\ol U$ is still an upper  solution of \qq{3.1}. This provides convenience for constructing appropriate upper and lower solutions and estimating solutions.
\item The strict homogeneous condition plays a crucial role in proving the uniqueness of positive equilibrium solutions.
\item Especially, when $\lm(\mathscr{L})<0$, the nonexistence of the positive solution can be proved by using strict subhomogeneity. At the same time, the strict subhomogeneous condition plays an important role in constructing the upper solution of the initial value problem by using the positive eigenfunction corresponding to the principal eigenvalue of the control problem.
\item The case $\lm(\mathscr{L})=0$ indicates that the zero solution is degenerate, which is a critical case.	Due to the loss of compactness for the solution maps, it is challenging to prove the nonexistence of positive solutions and the stability of the zero solution without additional conditions. Here, we prove these conclusions under a strong subhomogeneous condition.\zzz
\end{enumerate}
\end{remark}

\begin{remark}\lbl{r3.2} In Theorem {\rm\ref{th3.6}}, it is assumed that $f(x,t,u)$ is strictly subhomogeneous with respect to all $u\gg 0$, i.e., condition {\bf(F3)} holds. Especially, in Theorem {\rm\ref{th3.6}(3)}, it is required that $f$ is strongly subhomogeneous, i.e., $f(x,t,\rho u) \gg \rho f(x,t,u)$ for all $(x,t)\in\ol Q_T$, $u \gg 0$ and $\rho\in (0,1)$. However, some models do not meet these conditions $($for example, the following system \qq{4.11}$)$. Based on the proof of Theorem {\rm\ref{th3.6}} we can see that these can be weakened.
\begin{enumerate}[$(1)$]\vspace{-2mm}
\item In Theorem {\rm\ref{th3.6}(1)}, the conditions {\bf(F3)} and \qq{3.5} can be replaced by the following assumptions:
\begin{enumerate}\vspace{-2mm}
\item [{\rm(1a)}] $\ol U$ is a strict upper solution, that is, for all $i\in\sss$, there holds:
  \bess\begin{cases}
  \ol U_{it}>d_i\dd\int_\oo J_i(x,y)\ol U_i(y,t)\dy-d^*_i(x)\ol U_i
+f_i(x,t,\ol U),&(x,t)\in\ol Q_T,\\[2mm]
 \ol U_i(x,0)\ge \ol U_i(x,T),&x\in\boo;\end{cases}\eess
\item[{\rm(1b)}] $f(x,t,u)$ is subhomogeneous with respect to $u\ge 0$,  that is, $f(x,t,\delta u)\ge\delta f(x,t,u)$  for all $(x,t)\in\ol Q_T, u \ge 0$ and $\delta\in (0,1)$;\vskip 2pt
\item[{\rm(1c)}] $f(x,t,u)$ is strictly subhomogeneous with respect to $u$ with $0\ll u\le\overline{\phi}$, that is, $f(x,t,\delta u)>\delta f(x,t,u)$ for all $(x,t)\in\ol Q_T, 0\ll u \le  \overline{\phi}(x,t)$ and $\delta\in (0,1)$.
  \end{enumerate}\www
These show that when \qq{3.1} has a strict upper solution $\ol U$, it only needs $f(x,t,u)$ to be subhomogeneous with respect to all $u\gg 0$ and strictly homogeneous with respect to $0\ll u\le\overline{\phi}$, without the need for $f(x,t,u)$ to be strictly homogeneous with respect to all $u\gg 0$.\vskip 2pt
\item In Theorem {\rm\ref{th3.6}(2)}, the condition {\bf(F3)} can be replaced by that $f(x,t,u)$ is subhomogeneous with respect to all $u$.\vskip 2pt
\item In Theorem {\rm\ref{th3.6}(3)}, the conditions \qq{3.5} and that $f(x,t,u)$ is strongly subhomogeneous with respect to all $u\gg 0$ can be replaced by the following assumptions{\rm:}

\hspace{3mm} The problem \qq{3.1} has a strongly strict upper solution $\ol U$, that is, for all $i\in\sss$, there holds:
  \bess\begin{cases}
  \ol U_{it}\gg d_i\dd\int_\oo J_i(x,y)\ol U_i(y,t)\dy-d^*_i(x)\ol U_i
+f_i(x,t,\ol U),&(x,t)\in\ol Q_T,\\[2mm]
  \ol U_i(x,0)\ge\ol U_i(x,T),&x\in\boo;\end{cases}\zzz\eess
 $f(x,t,u)$ is subhomogeneous with respect to all $u\gg 0$, and $f(x,t,u)$ is strongly subhomogeneous with respect to $0\ll u \le \rho_0\overline{\phi}$ for some $0\le\rho_0\ll 1$, i.e., $f(x,t,\delta u)\gg \delta f(x,t,u)$ for all $(x,t)\in\ol Q_T, \;0\ll u \le\rho_0\overline{\phi}(x,t)$ and $\delta\in (0,1)$.\www
\end{enumerate}
\end{remark}

Before concluding this section, we will present the conclusion regarding the logistic equation equation, as it is a highly anticipated equation.
We study the following initial value problems
 \bes\begin{cases}
 u_t=d\dd\int_{\Omega}J(x,y) u(y,t){\rm d}y-d^*(x)u+uf(x,t,u),\;\;&x\in\ol\oo,\;t>0\\[1mm]
 u(x,0)=u_0(x,t)>0,&x\in\ol\oo,\zzz
  \end{cases}\lbl{3.18}\ees
where, either $d^*(x)=d$ or $d^*(x)=dj(x)$. Assume that $f\in C^{0,0,1}(\ol Q_T\times[0,+\yy))$ is $T$-periodic in time $t$, $f(x,t,u)$ is strictly decreasing in $u\ge 0$ for $(x,t)\in\ol Q_T$, and that there exists $M>0$ such that $f(x,t,u)<0$ for all $(x,t)\in\ol Q_T$ and $u>M$. Then  $uf(x,t,u)$ is strictly subhomogeneous with respect to $u\gg 0$, and $M$ is an upper solution of \eqref{3.18} if one of the following holds:
 \begin{enumerate}[$(A)$]\vspace{-2mm}
\item either $d^*(x)=d$, or $d^*(x)=dj(x)$ and $J(x,y)$ is symmetric in $x,y$, i.e., $J(x,y)=J(y,x)$;\vskip 2pt
\item $J(x,y)$ is not symmetric in $x,y$, but $d\int_{\Omega}J(x,y){\rm d}y-d^*(x)+f(x,t,M)\le 0$ in $\ol Q_T$.
\end{enumerate}\www

Set $\ell(x,t)=f(x,t,0)-d^*(x)$. Then the conclusions of Theorem \ref{t2.1} holds for $\ell(x,t)$. Let $\lm({\mathscr L})$ be the generalized principal eigenvalue of ${\mathscr L}$. Making use of Theorem \ref{th3.6}, we have the following theorem.\www

\begin{theo}\lbl{t3.3}\, Assume that one of the above conditions $(A)$ and $(B)$ holds. Let $u(x,t;u_0)$ be unique solution of \qq{3.18}. \www
  \begin{enumerate}
\item[{\rm(1)}]\, If $\lm({\mathscr L})>0$, then the time-periodic problem
 \bes\begin{cases}
 U_t=d\dd\int_{\Omega}J(x,y)U(y){\rm d}y-d^*(x)U+Uf(x,U)=0,&(x,t)\in\ol Q_T,\\[1mm]
 U(x,0)=U(x,T),&x\in\boo
 \end{cases}
  \lbl{3.19}\ees
has a unique positive solution $U\in C(\ol Q_T)$ and $U(x,t)$ is globally asymptotically stable in the time-periodic sense, i.e., $\dd\lim_{n\to+\yy}u(x,t+nT;u_0)=U(x,t)$ uniformly in $\ol Q_T$.
\item[{\rm(2)}]\, If $\lm({\mathscr L})\le 0$, then the problem \qq{3.19} has no continuous positive solution, and $\dd\lim_{n\to+\yy}u(x,t+nT;u_0)=0$ uniformly  in $\ol Q_T$.
Especially, $u(x,t;u_0)$ converges exponentially to zero when $\lm({\mathscr L})<0$.
  \end{enumerate}\www
\end{theo}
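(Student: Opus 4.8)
The plan is to obtain Theorem~\ref{t3.3} as a corollary of Theorem~\ref{th3.6} in the scalar case $m=1$, with reaction term $F(x,t,u):=uf(x,t,u)$. First I would observe that \qq{3.18} is exactly \qq{z1.3} with $f_1$ replaced by $F$, and that \qq{3.19} is the associated time-periodic problem \qq{3.1}; moreover $\partial_uF(x,t,0)=f(x,t,0)$, so the coefficient $\partial_uF(x,t,0)-d^*(x)$ prescribed in \qq{1.2} coincides with $\ell(x,t)=f(x,t,0)-d^*(x)$, and hence the operator $\mathscr L$, the spectral objects furnished by Theorem~\ref{t2.1}, and the generalized principal eigenvalue $\lm(\mathscr L)$ are precisely those appearing in the statement. (Global existence and uniqueness of $u(\cdot,\cdot;u_0)$ follow in the usual way from the comparison principle, Theorem~\ref{t3c.1}, since $F$ is locally Lipschitz in $u$, together with the linear bound $F(x,t,u)=uf(x,t,u)\le uf(x,t,0)\le\|f(\cdot,\cdot,0)\|_{L^\yy(\ol Q_T)}\,u$ for $u\ge0$.) It then suffices to verify {\bf(F1)}--{\bf(F3)} for $F$ and to exhibit an admissible upper solution.

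I would check the hypotheses as follows. {\bf(F1)}: $F$ inherits the $T$-periodicity in $t$ and the regularity $C^{0,0,1}(\ol Q_T\times\R_+)$ from $f$, and $F(x,t,0)=0$; the cooperativity in {\bf(F1)} and the irreducibility {\bf(F2)} are vacuous for $m=1$, since a $1\times1$ matrix is always irreducible. {\bf(F3)}: because $f(x,t,\cdot)$ is strictly decreasing, $f(x,t,\rho u)>f(x,t,u)$ for $\rho\in(0,1)$ and $u>0$, whence $F(x,t,\rho u)=\rho u\,f(x,t,\rho u)>\rho u\,f(x,t,u)=\rho F(x,t,u)$, i.e.\ $F$ is strictly subhomogeneous with respect to $u\gg0$, as already recorded before the theorem; and since $\gg$ coincides with $>$ on $\R$, this is at the same time the \emph{strong} subhomogeneity needed in Theorem~\ref{th3.6}(3). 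For the upper solution I would take the constant $\ol U\equiv M$: since $\ol U_t=0$, condition \qq{3.5} reduces to $d\int_\Omega J(x,y)\dy-d^*(x)+f(x,t,M)\le0$ in $\ol Q_T$, which is precisely hypothesis $(B)$, while under $(A)$ it holds because $f(x,t,M)\le0$ by the choice of $M$ and $d\int_\Omega J(x,y)\dy-d^*(x)\le0$ --- with equality in the Neumann case $d^*(x)=dj(x)$ with $J$ symmetric, via $j(x)=\int_\Omega J(y,x)\dy=\int_\Omega J(x,y)\dy$. The boundary relation $\ol U(x,0)=M=\ol U(x,T)$ is trivial and $\ol U=M\gg0$.

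Finally I would read off the three cases. If $\lm(\mathscr L)>0$, Theorem~\ref{th3.6}(1) with $\ol U=M$ gives a unique positive $U\in C(\ol Q_T)$ solving \qq{3.19} with $0<U\le M$, together with $\lim_{n\to+\yy}u(x,t+nT;u_0)=U(x,t)$ uniformly in $\ol Q_T$, which is exactly the asserted global asymptotic stability in the time-periodic sense. If $\lm(\mathscr L)<0$, Theorem~\ref{th3.6}(2) yields nonexistence of a continuous positive solution of \qq{3.19} and the bound $u(x,t;u_0)\le C{\rm e}^{-\sigma t}$, hence uniform --- indeed exponential --- decay to $0$. If $\lm(\mathscr L)=0$, Theorem~\ref{th3.6}(3) applies because $F$ is strongly subhomogeneous and $M$ is an upper solution with $M\gg0$ satisfying \qq{3.5}; it gives nonexistence of a continuous positive solution and $\lim_{n\to+\yy}u(x,t+nT;u_0)=0$ uniformly. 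The argument is in essence a reduction to Theorem~\ref{th3.6}; the only points demanding genuine attention are the upper-solution check under the Neumann alternative in $(A)$ and the observation that the single monotonicity hypothesis on $f$ is strong enough to supply at once the strict subhomogeneity used in parts~(1)--(2) and the strong subhomogeneity used in part~(3).
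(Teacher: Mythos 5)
Your proposal is correct and follows essentially the same route as the paper, which obtains Theorem \ref{t3.3} precisely by noting (in the paragraph preceding the statement) that $uf(x,t,u)$ is strictly (hence, for $m=1$, strongly) subhomogeneous, that the constant $M$ is an upper solution under $(A)$ or $(B)$, and that $\ell(x,t)=f(x,t,0)-d^*(x)$ is the linearization, so that Theorem \ref{th3.6} applies directly. Your verification of {\bf(F1)}--{\bf(F3)} in the scalar case and of the upper-solution inequality under the Dirichlet/Neumann alternatives simply fills in the details the paper leaves implicit.
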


\section{A West Nile virus model}\lbl{S5}
\setcounter{equation}{0} {\setlength\arraycolsep{2pt}

In this section we study a  West Nile virus model with nonlocal dispersal, standard incidences and effects of recovered hosts.

Let $H_u(x,t)$, $H_i(x,t)$, $V_u(x,t)$ and $V_i(x,t)$ be the densities of susceptible birds, infected birds, susceptible mosquitoes, and infected mosquitoes at location $x$ and time $t$, respectively. Then $H(x,t)=H_u(x,t)+H_i(x,t)$ and $V(x,t)=V_u(x,t)+V_i(x,t)$ are the total densities of birds and mosquitoes. Recently, the authors of this paper (\cite{WZhang24}) proposed and studied the following West Nile (WN) virus model in the spatiotemporal heterogeneity with random diffusion processes and general boundary conditions
\bes\left\{\begin{aligned}
&H_{u,t}=\nabla\cdot d_1\nabla H_u+a_1(H_u+H_i)-b_1H_u-c_1(H_u+H_i)H_u
-\mu_1H_uV_i,&&x\in\oo,\; t>0,\\
&H_{it}=\nabla\cdot d_1\nabla H_i+\mu_1H_uV_i-b_1H_i-c_1(H_u+H_i)H_i,\!\!&&x\in\oo,\; t>0,\\
&V_{u,t}=\nabla\cdot d_2\nabla V_u+a_2(V_u+V_i)-b_2V_u
-c_2(V_u+V_i)V_u-\mu_2H_iV_u,&&x\in\oo,\; t>0,\\
&V_{it}=\nabla\cdot d_2\nabla V_i+\mu_2H_iV_u-b_2V_i-c_2(V_u+V_i)V_i,&&x\in\oo,\; t>0,\\
&\alpha_1\partial\nu H_u+\beta_1H_u=\alpha_1\partial_\nu H_i+\beta_1H_i=
\alpha_2\partial_\nu V_u+\beta_2V_u=\alpha_2\partial_\nu V_i+\beta_2V_i=0,&&x\in\partial\oo,\; t>0,\\
&\big(H_u, H_i, V_u, V_i\big)=\big(H_{u0}(x,t), H_{i0}(x,t), V_{u0}(x,t), V_{i0}(x,t)),&&x\in\oo,\; t=0,\vspace{-2mm}
   \end{aligned}\rr.\quad\lbl{4.1}\ees
where $\nu$ is the outward normal vector of $\partial\Omega$, all coefficients are continuous functions of $(x,t)$ and $T$-periodic in time $t$; $d_1$ and $d_2$ are the diffusion rates of birds and mosquitoes, $a_1$ and $a_2$ ($b_1$ and $b_2$)  are the birth (death) rates of susceptible birds and mosquitoes, $c_1$ and $c_2$ are the loss rates of birds and mosquitoes population due to environmental crowding, $\mu_1$ and $\mu_2$ are the transmission rates of uninfected birds and uninfected mosquitoes, respectively; $\alpha_k, \beta_k\ge 0$ and $\alpha_k+\beta_k>0$ in $\overline\oo\times[0, T]$, $k=1,2$; they are all H\"{o}lder continuous, nonnegative and nontrivial; $b_k, \mu_k\ge 0$, $d_k, a_k, c_k>0$ in $\overline\oo\times[0, T]$. By the upper and lower solutions method, it was proved that the problem \qq{4.1} has a positive time periodic solution if and only if birds  and mosquitoes persist and the basic reproduction ratio is greater than one, and moreover the positive time periodic solution is unique and globally asymptotically stable when it exists.

In the model \qq{4.1}, the diffusion is random (local) diffusion, and effects of recovered hosts is neglected, and terms $H_uV_i$ and $H_iV_u$ represent bilinear incidences. However, because the movements of birds and mosquitoes are mainly through flight,  it is more reasonable to use nonlocal dispersal (long-distance diffusion) instead of local diffusion. Moreover, we know that the probability of an infected mosquito biting healthy birds and the probability of a healthy mosquito biting infected birds are $\frac{H_u}{H_u+H_i}$ and $\frac{H_i}{H_u+H_i}$, respectively. It is more reasonable to replace bilinear incidences $H_uV_i$ and $H_iV_u$ with standard incidences $\frac{H_u}{H_u+H_i}$ and $\frac{H_i}{H_u+H_i}$, and consider the effects of recovered hosts (after the virus of the infected host disappears, the infected host becomes a susceptible host again). Under the above considerations, the corresponding West Nile virus model with nonlocal dispersal, standard incidences and effects of recovered hosts are the following
 \bes\left\{\begin{aligned}
&H_{ut}=d_1\!\dd\int_\oo\! J_1(x,y)H_u (y,t) \dy-d^*_1(x)H_u+a_1(x,t)(H_u+H_i)-b_1(x,t)H_u\\[0.5mm]
 &\hspace{17mm}-c_1(x,t)(H_u+H_i)H_u
 -\mu_1(x,t)\dd\frac{H_u}{H_u+H_i}V_i+\gamma(x,t)H_i,&&x\in\oo,\; t>0,\\[1mm]
 &H_{it}=d_1\!\dd\int_\oo\! J_1(x,y)H_i (y,t) \dy-d^*_1(x) H_i+\mu_1(x,t)\dd\frac{H_u}{H_u+H_i}V_i\\[0.5mm]
 &\hspace{17mm}-b_1(x,t)H_i-c_1(x,t)(H_u+H_i)H_i-\gamma(x,t)H_i,\!\!&&x\in\oo,\; t>0,\\[1mm]
 &V_{ut}=d_2\!\dd\int_\oo\! J_2(x,y)V_u (y,t) \dy-d^*_2(x) V_u+a_2(x,t)(V_u+V_i)\\[0.5mm]
 &\hspace{17mm}-b_2(x,t)V_u-c_2(x,t)(V_u+V_i)V_u-\mu_2(x,t)\dd\frac{H_i}{H_u+H_i}V_u,&&x\in\oo,\; t>0,\\[1mm]
 &V_{it}=d_2\!\dd\int_\oo\! J_2(x,y)V_i (y,t) \dy-d^*_2(x) V_i+\mu_2(x,t)\dd\frac{H_i}{H_u+H_i}V_u-b_2(x,t)V_i\\
	&\hspace{17mm}-c_2(x,t)(V_u+V_i)V_i,&&x\in\oo,\; t>0,\\
	&\big(H_u, H_i, V_u, V_i\big)=\big(H_{u0}(x), H_{i0}(x), V_{u0}(x), V_{i0}(x)\big)>0,&&x\in\oo,\; t=0,\vspace{-2mm}
 \end{aligned}\rr.\lbl{4.2}\ees
where $\gamma(x,t)\ge 0$ is the recovery rate from the loss of virus in infected hosts to uninfected hosts; $d_k$ is positive constant, $J_k$ satisfies the condition {\bf(J)}, and $d^*_k(x)$ is defined by the manner {\bf(D)}; $\oo\subset\mathbb{R}^N$ is a bounded domain whose boundary has zero measure. Same as in \qq{4.1}, all coefficients are continuous functions of $(x,t)$ and $T$-periodic in time $t$.

Throughout this section, we always assume that\vspace{-2mm}
 \begin{quote}$\bullet$\; Initial functions are continuous in $\ol\oo$; and $a_k, c_k>0$, $b_k, \mu_k\ge 0$ in $\ol Q_T$, and there exists $x_0\in\boo$ such that $\mu_k(x_0,t)>0$ in $[0, T]$, $k=1,2$. \vspace{-2mm}\end{quote}

The corresponding time-periodic problem of \qq{4.2} is
 \bes\left\{\!\begin{aligned}
&\mathsf{H}_{ut}=d_1\!\int_\oo\! J_1(x,y)\mathsf{H}_u(y,t)\dy-[d_1^*(x)+b_1(x,t)]\mathsf{H}_u+
a_1(x,t)(\mathsf{H}_u+\mathsf{H}_i)\\
&\hspace{17mm}-c_1(x,t)(\mathsf{H}_u+\mathsf{H}_i)\mathsf{H}_u
-\mu_1(x,t)\dd\frac{\mathsf{H}_u}{\mathsf{H}_u+\mathsf{H}_i}\mathsf{V}_i
+\gamma(x,t)\mathsf{H}_i,\!&&(x,t)\in\ol Q_T,\\
&\mathsf{H}_{it}=d_1\!\int_\oo\! J_1(x,y)\mathsf{H}_i(y,t)\dy-[d_1^*(x)+b_1(x,t)]\mathsf{H}_i+
\mu_1(x,t)\dd\frac{\mathsf{H}_u}{\mathsf{H}_u+\mathsf{H}_i}\mathsf{V}_i\\
&\hspace{17mm}-c_1(x,t)(\mathsf{H}_u+\mathsf{H}_i)\mathsf{H}_i
-\gamma(x,t)\mathsf{H}_i,\!&&(x,t)\in\ol Q_T,\\
&\mathsf{V}_{ut}=d_2\!\int_\oo\! J_2(x,y)\mathsf{V}_u(y,t)\dy-[d_2^*(x)+b_2(x,t)]\mathsf{V}_u+
a_2(x,t)(\mathsf{V}_u+\mathsf{V}_i)\\
&\hspace{17mm}-c_2(x,t)(\mathsf{V}_u+\mathsf{V}_i)\mathsf{V}_u
-\mu_2(x,t)\dd\frac{\mathsf{H}_i}{\mathsf{H}_u+\mathsf{H}_i}\mathsf{V}_u,\!&&(x,t)\in\ol Q_T,\\
&\mathsf{V}_{it}=d_2\!\int_\oo\! J_1(x,y)\mathsf{V}_i(y,t)\dy-[d_2^*(x)+b_2(x,t)]\mathsf{V}_i
+\mu_2(x,t)\dd\frac{\mathsf{H}_i}{\mathsf{H}_u+\mathsf{H}_i}\mathsf{V}_u\\
&\hspace{17mm}-c_2(x,t)(\mathsf{V}_u+\mathsf{V}_i)\mathsf{V}_i,\!&&(x,t)\in\ol Q_T,\\[1mm]
&(\mathsf{H}_u,\mathsf{H}_i,\mathsf{V}_u,\mathsf{V}_i)(x,0)=
(\mathsf{H}_u,\mathsf{H}_i,\mathsf{V}_u,\mathsf{V}_i)(x,T),&&x\in\boo.
 \end{aligned}\rr.\;\;\lbl{4.3}\ees

 \begin{theo}\lbl{t5.2} The problem \qq{4.2} has a unique global solution $(H_u(x,t), H_i(x,t), V_u(x,t), V_i(x,t)\big)$, which is positive and bounded.
\end{theo}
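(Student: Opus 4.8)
The plan is to establish global existence, uniqueness, positivity, and boundedness of the solution to \qq{4.2} by combining a local existence argument with a priori bounds coming from the structure of the WN virus model.

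First I would set up local existence and uniqueness. Write the system \qq{4.2} in the abstract form $\mathbf{w}_t = \mathbf{D}[\mathbf{w}] + \mathbf{F}(x,t,\mathbf{w})$, where $\mathbf{w}=(H_u,H_i,V_u,V_i)$, $\mathbf{D}$ is the diagonal nonlocal dispersal operator with the $-d_k^*(x)$ terms absorbed, and $\mathbf{F}$ collects all reaction terms. The only singularity is in the incidence terms $\frac{H_u}{H_u+H_i}$ and $\frac{H_i}{H_u+H_i}$; on the region $\{H_u>0,\,H_i>0\}$ these are smooth and locally Lipschitz in $\mathbf{w}$, and since the initial data satisfy $\mathbf{w}_0>0$, one can either work on an invariant region bounded away from the singular set or simply note that $\frac{H_u}{H_u+H_i}\in[0,1]$ extends continuously whenever $H_u,H_i\ge 0$ are not both zero. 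Treating the nonlocal operator as a bounded perturbation on $[C(\boo)]^4$, a standard Picard/contraction-mapping argument (or the variation-of-constants formula together with Banach fixed point, exactly as invoked for the scalar initial-value problems in the proof of Theorem \ref{th3.5}) yields a unique mild solution on a maximal interval $[0,T_{\max})$, which is classical in $t$.

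Next I would prove positivity and then the a priori bounds that force $T_{\max}=+\infty$. Positivity: each equation has the form $w_{k,t} \ge d_k\int_\oo J_k(x,y)w_k(y,t)\dy - c(x,t)w_k$ with $w_k$ appearing linearly in the "loss" part and all cross terms nonnegative on the positive cone (this is where the sign conditions $b_k,\mu_k,\gamma,c_k\ge 0$ and the standard-incidence bounds are used), so Lemma \ref{le3.1}(i) applied componentwise gives $\mathbf{w}\ge 0$; a bootstrap using Lemma \ref{le3.1}(ii)--(iii) upgrades this to strict positivity for $t>0$, which in particular keeps $H_u+H_i$ away from $0$ and legitimizes the incidence terms. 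For boundedness, add the first two equations of \qq{4.2} to get a closed scalar inequality for the total bird density $H=H_u+H_i$:
\[
H_t \le d_1\dd\int_\oo J_1(x,y)H(y,t)\dy - d_1^*(x)H + a_1(x,t)H - b_1(x,t)H - c_1(x,t)H^2,
\]
since the $\mu_1$ and $\gamma$ terms cancel. Because $c_1$ has a positive lower bound and $a_1,b_1,d_1^*$ are bounded, the right-hand side is negative once $H$ is large, so a comparison with the (spatially homogeneous) logistic-type ODE majorant gives a uniform bound $H\le C_1$ on $[0,T_{\max})$. Adding the last two equations likewise gives a closed inequality for $V=V_u+V_i$ (the $\mu_2$ terms cancel), hence $V\le C_2$. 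Since $0\le H_u,H_i\le H\le C_1$ and $0\le V_u,V_i\le V\le C_2$, the solution stays in a fixed bounded set, so it cannot blow up in finite time and $T_{\max}=+\infty$; uniqueness on $[0,+\infty)$ follows from the local uniqueness together with this global bound (alternatively from the comparison principle, Theorem \ref{t3c.1}, after checking the cooperativity of the shifted reaction, noting WN systems of this predator--prey/epidemic type are not globally cooperative but the relevant comparisons are one-sided).

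The main obstacle I anticipate is handling the non-Lipschitz incidence nonlinearities $\frac{H_u}{H_u+H_i}V_i$ near the set $\{H_u=H_i=0\}$: one must ensure the solution never reaches this set, which requires establishing strict positivity of $H_u$ and $H_i$ (or at least of $H_u+H_i$) \emph{before} the nonlinearity is known to be Lipschitz — a mild chicken-and-egg issue resolved by first solving on a short time interval with the continuous (but only locally Lipschitz on the open cone) nonlinearity, extracting positivity from Lemma \ref{le3.1}, and then re-running the Lipschitz theory on the now-invariant region $\{H_u+H_i\ge\delta\}$. A secondary technical point is that \qq{4.2} is not a cooperative system in the sense of {\bf(F1)}, so Theorem \ref{t3c.1} does not apply verbatim; the needed comparisons ($H\le C_1$, $V\le C_2$, componentwise positivity) are nonetheless valid because each is obtained from a \emph{single} scalar differential inequality to which Lemma \ref{le3.1} and a scalar ODE comparison apply directly.
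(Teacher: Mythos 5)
Your proposal is essentially correct, and it is precisely the ``standard'' argument the paper alludes to: for Theorem \ref{t5.2} the authors give no proof at all, stating only that it is standard and omitting the details, so there is nothing in the paper to compare against beyond this. Your key observations are the right ones — quasi-positivity of the reaction terms plus Lemma \ref{le3.1} give nonnegativity/strict positivity, and summing the bird (resp.\ mosquito) equations makes the $\mu_1,\gamma$ (resp.\ $\mu_2$) terms cancel, yielding a \emph{closed} nonlocal logistic equation for $H=H_u+H_i$ and $V=V_u+V_i$, which simultaneously gives the uniform upper bound and (comparing from below with the ODE $z'=-Cz-Cz^2$, since the nonlocal term is nonnegative and the initial data are strictly positive on the compact set $\boo$) a positive lower bound for $H$ on every finite time interval, so the standard incidences stay locally Lipschitz and the local solution continues globally; uniqueness then follows from the Lipschitz/Gronwall argument on this invariant region rather than from the cooperative comparison principle, exactly as you note.
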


The proof of Theorem \ref{t5.2} is standard and we omit the details.

\subsection{The existence and uniqueness of positive solutions of \qq{4.3}}\lbl{S4.1}
{\setlength\arraycolsep{2pt}

We first consider the logistic type equations, with $k=1,2$,
 \[\begin{cases}
 \mathsf{U}_t=d_k\dd\int_\oo J_k(x,y)\mathsf{U}(y,t)\dy
 +[a_k(x,t)-d_k^*(x)-b_k(x,t)]\mathsf{U}-c_k(x,t)\mathsf{U}^2,&(x,t)\in\ol Q_T\\[2mm]
 \mathsf{U}(x,0)=\mathsf{U}(x,T),&x\in\boo.
 \end{cases}\eqno(4.4_k)\]
If $\big(\mathsf{H}_u(x,t), \mathsf{H}_i(x,t), \mathsf{V}_u(x,t), \mathsf{V}_i(x,t)\big)$ is a nonnegative solution of \qq{4.3}, then $\mathsf{H}(x,t)=\mathsf{H}_u(x,t)+\mathsf{H}_i(x,t)$ and $\mathsf{V}(x,t)=\mathsf{V}_u(x,t)+\mathsf{V}_i(x,t)$ satisfy $(4.4_1)$ and $(4.4_2)$, respectively. Define operators ${\mathscr G}_k$, for $k=1,2$, by
\bess
 {\mathscr G}_k[\mathsf{U}]=d_k\dd\int_\oo J_k(x,y)\mathsf{U}(y,t)\dy+g_k(x,t)\mathsf{U}(x,t)
 -\mathsf{U}_t(x,t),\;\;\mathsf{U}\in \mathbb{X}^1,
 \eess
and let $\lm({\mathscr G}_k)$ be the generalized principal eigenvalue of ${\mathscr G}_k$, where
 \[g_k(x,t)=a_k(x,t)-d_k^*(x)-b_k(x,t),\;\;\;k=1,2.\]

As $c_k(x,t)>0$ in $\ol Q_T$, by Theorem \ref{t3.3}, we know that
the problem $(4.4_k)$ has a unique positive solution which is globally asymptotically stable when $\lm({\mathscr G}_k)>0$, while the problem $(4.4_k)$ has no positive solution and $0$ is asymptotically stable when $\lm({\mathscr G}_k)\le 0$.\setcounter{equation}{4}

In the following we assume that $\lm({\mathscr G}_k)>0$, $k=1,2$. Then $(4.4_1)$ and $(4.4_2)$  have unique positive solutions $\mathsf{H}(x,t)$ and $\mathsf{V}(x,t)$, respectively, and $\mathsf{H}(x,t)$ and $\mathsf{V}(x,t)$ are globally asymptotically stable in the time-periodic sense. Set
 \bess
 &\ell_{11}(x,t)=-[d_1^*(x)+b_1(x,t)+\gamma(x,t)+c_1(x,t)\mathsf{H}(x,t)],\;\;
 \ell_{12}(x,t)=\mu_1(x,t),&\\[1mm]
 &\ell_{21}(x,t)=\mu_2(x,t)\dd\frac{\mathsf{V}(x,t)}{\mathsf{H}(x,t)},\;\;
 \ell_{22}(x,t)=-[d_2^*(x)+b_2(x,t)+c_2(x,t)\mathsf{V}(x,t)],&
 \eess
Investigating the positive solutions of \qq{4.3} can be transformed into finding the positive solutions $\big(\mathsf{H}_i(x,t), \mathsf{V}_i(x,t)\big)$ of
 \bes\left\{\!\begin{aligned}
&\mathsf{H}_{it}=d_1\!\int_\oo\! J_1(x,y)\mathsf{H}_i(y,t)\dy
+\ell_{11}(x,t)\mathsf{H}_i+\mu_1(x,t)\dd\frac{\mathsf{H}(x,t)-\mathsf{H}_i}{\mathsf{H}(x,t)}
\mathsf{V}_i,&&(x,t)\in\ol Q_T,\\[1mm]
&\mathsf{V}_{it}=d_2\!\int_\oo\! J_2(x,y)\mathsf{V}_i(y,t)\dy+\ell_{22}(x,t)\mathsf{V}_i+\mu_2(x,t)\dd\frac{\mathsf{V}(x,t)-\mathsf{V}_i}{\mathsf{H}(x,t)}
\mathsf{H}_i,&&(x,t)\in\ol Q_T,\\[1mm]
&\mathsf{H}_i(x,0)=\mathsf{H}_i(x,T),\;\;\mathsf{V}_i(x,0)=\mathsf{V}_i(x,T),\;\;
&&x\in\boo
 \end{aligned}\rr.\qquad\lbl{4.7}\ees
satisfying $\mathsf{H}_i<\mathsf{H}$, $\mathsf{V}_i<\mathsf{V}$ in $\ol Q_T$. We should emphasize that \qq{4.7} is not a cooperative system within the range of $(\mathsf{H}_i, \mathsf{V}_i)\ge (0,0)$, but only within the range of $(0,0)\le(\mathsf{H}_i, \mathsf{V}_i)\le(\mathsf{H}, \mathsf{V})$.
To utilize the abstract conclusion presented earlier, it is necessary to consider an auxiliary system
\bes\left\{\!\begin{aligned}
&\mathsf{H}_{it}=d_1\!\int_\oo\! J_1(x,y)\mathsf{H}_i(y,t)\dy
+\ell_{11}(x,t)\mathsf{H}_i+\mu_1(x,t)\dd\frac{(\mathsf{H}(x,t)-\mathsf{H}_i)^+}{\mathsf{H}(x,t)}
\mathsf{V}_i,&&(x,t)\in\ol Q_T,\\[1mm]
&\mathsf{V}_{it}=d_2\!\int_\oo\! J_2(x,y)\mathsf{V}_i(y,t)\dy+\ell_{22}(x,t)\mathsf{V}_i+\mu_2(x,t)\dd\frac{(\mathsf{V}(x,t)-\mathsf{V}_i)^+}{\mathsf{H}(x,t)}
\mathsf{H}_i,&&(x,t)\in\ol Q_T,\\[1mm]
&\mathsf{H}_i(x,0)=\mathsf{H}_i(x,T),\;\;\mathsf{V}_i(x,0)=\mathsf{V}_i(x,T),\;\;
&&x\in\boo,
 \end{aligned}\rr.\qquad\lbl{4.7x}\ees
which is a cooperative system within the range of $(\mathsf{H}_i, \mathsf{V}_i)\ge (0,0)$. In fact, we will show that \qq{4.7} and \qq{4.7x} are equivalent.

Linearize the left-hand side of system \qq{4.7} at $(\mathsf{H}_i, \mathsf{V}_i)=(0,0)$ to obtain an operator ${\mathscr L}=({\mathscr L}_1, {\mathscr L}_2)$:
 \bess\left\{\begin{aligned}
&{\mathscr L}_1[\phi]=d_1\!\int_\oo\!J_1(x,y)\phi_1(y,t)\dy
+\ell_{11}(x,t)\phi_1(x,t)+\ell_{12}(x,t)\phi_2(x,t)-\phi_{1t}(x,t),\\[1mm]
&{\mathscr L}_2[\phi]=d_2\!\int_\oo\! J_2(x,y)\phi_2(y,t)\dy
+\ell_{22}(x,t)\phi_2(x,t)+\ell_{21}(x,t)\phi_1(x,t)-\phi_{2t}(x,t),
  \end{aligned}\rr.\eess
where $\phi=(\phi_1,\phi_2)\in \mathbb{X}^2_T:=\mathbb{X}^m_T\big|_{m=2}$. Set
 \[L(x,t)=(\ell_{ik}(x,t))_{2\times 2}.\]
Then $L(x,t)$ is a cooperative matrix, and $L(x_0,t)$ is irreducible for all $0\le t\le T$. Thus, Theorem \ref{t2.1} holds. Let $\lm({\mathscr L})$ be the generalized principal eigenvalue of ${\mathscr L}$ given in Theorem \ref{t2.1}. The main result of this section is the following theorem.\vspace{-2mm}

\begin{theo}\lbl{t5.1} Assume that $\lm({\mathscr G}_k)>0$, $k=1,2$.\www
\begin{enumerate}
\item[{\rm(1)}]\; If $\lm({\mathscr L})>0$, then problem \qq{4.7} has a unique continuous positive solution $\big(\mathsf{H}_i, \mathsf{V}_i\big)$ and satisfies
 \[\mathsf{H}_i<\mathsf{H},\;\;\;\mathsf{V}_i<\mathsf{V}
 \;\;\;\mbox{in}\;\;\ol Q_T.\]
So \qq{4.3} has a unique continuous positive solution
 \[\big(\mathsf{H}_u, \mathsf{H}_i, \mathsf{V}_u, \mathsf{V}_i\big)=\big(\mathsf{H}-\mathsf{H}_i,\, \mathsf{H}_i,\,  \mathsf{V}-\mathsf{V}_i, \,\mathsf{V}_i\big).\]
\item[{\rm(2)}]\;If $\lm({\mathscr L})\le 0$, then problem \qq{4.7} has no bounded solution with a positive lower bound, and so \qq{4.3} has no bounded solution with a positive lower bound.\www
 \end{enumerate}
 \end{theo}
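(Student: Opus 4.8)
The plan is to transfer everything to the abstract threshold theorem (Theorem \ref{th3.6}) applied to the cooperative auxiliary system \eqref{4.7x}. First I would record that, since $\lm(\mathscr G_k)>0$, Theorems \ref{th3.3} and \ref{t3.3} already supply the unique positive $T$-periodic solutions $\mathsf H,\mathsf V\in C(\ol Q_T)$ of $(4.4_1)$, $(4.4_2)$, with $\inf_{\ol Q_T}\mathsf H,\inf_{\ol Q_T}\mathsf V>0$; these are fixed from now on. Next I would view \eqref{4.7x} as an instance of \eqref{3.1} with nonlinearity $f=(f_1,f_2)$ obtained by absorbing $-d_i^*(x)\cdot$ into $f_i$, and check the structural hypotheses: $f(x,t,0)=0$; $f$ is cooperative for $u\ge 0$ because the positive parts keep the off-diagonal terms nonnegative; the linearization of \eqref{4.7x} at $u=0$ is exactly $\mathscr L$ (near $u=0$ the truncations $(\mathsf H-\mathsf H_i)^+$, $(\mathsf V-\mathsf V_i)^+$ are inactive since $\mathsf H,\mathsf V$ have positive infima), so {\bf(L2)} holds because $L(x_0,t)$ is irreducible; and, by the elementary identity $f(x,t,\rho u)-\rho f(x,t,u)=\rho(1-\rho)\frac{\mathsf H_i\mathsf V_i}{\mathsf H}(\mu_1,\mu_2)$ valid for $0\le u\le(\mathsf H,\mathsf V)$, $f$ is subhomogeneous for $u\ge 0$ and strictly subhomogeneous for $0\ll u\le(\mathsf H,\mathsf V)$ (here $\mu_k(x_0,\cdot)>0$ is used). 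The kinks of $f$ lie on $\{\mathsf H_i=\mathsf H\}\cup\{\mathsf V_i=\mathsf V\}$, away from $u=0$; after a harmless modification of $f$ off the box $[0,(\mathsf H,\mathsf V)]$ making it globally $C^1$, conditions {\bf(F1)}--{\bf(F2)} hold on the region $0\ll u<(\mathsf H,\mathsf V)$ where all solutions below live, which is all that is needed.

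The second ingredient is the equivalence of \eqref{4.7} and \eqref{4.7x}. Using the equations $(4.4_k)$ together with $\ell_{11}+d_1^*=-(b_1+\gamma+c_1\mathsf H)$ and $\ell_{22}+d_2^*=-(b_2+c_2\mathsf V)$, a one-line computation shows that plugging $(\mathsf H,\mathsf V)$ into the right-hand side of \eqref{4.7x} leaves a surplus $(a_1+\gamma)\mathsf H>0$ in the first equation and $a_2\mathsf V>0$ in the second, while $\mathsf H(\cdot,0)=\mathsf H(\cdot,T)$, $\mathsf V(\cdot,0)=\mathsf V(\cdot,T)$; hence $(\mathsf H,\mathsf V)$ is a \emph{strict} upper solution of \eqref{4.7x} in the sense of Remark \ref{r3.2}(1a). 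By the sliding argument from the proof of Theorem \ref{th3.4} — comparing $s\cdot(\mathsf H_i,\mathsf V_i)$ against $(\mathsf H,\mathsf V)$ and invoking the strong maximum principle (Theorem \ref{th3.2}) together with the strictness — every nonnegative $T$-periodic solution of \eqref{4.7x}, and more generally every solution with a positive lower bound, satisfies $\mathsf H_i\le\mathsf H$, $\mathsf V_i\le\mathsf V$; on this range the truncations disappear, so such a function solves \eqref{4.7}, and conversely. Thus the two systems share their positive periodic solutions and their positive-lower-bound solutions.

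For statement (1), with $\lm(\mathscr L)>0$: by the two preceding paragraphs and Remark \ref{r3.2}(1) (which lets {\bf(F3)} and \eqref{3.5} be replaced by the strict upper solution plus (sub)homogeneity of $f$ on $[0,(\mathsf H,\mathsf V)]$), Theorem \ref{th3.6}(1) applied to \eqref{4.7x} with $\ol U=(\mathsf H,\mathsf V)$ yields a unique positive solution $(\mathsf H_i,\mathsf V_i)\in[C(\ol Q_T)]^2$ with $(\mathsf H_i,\mathsf V_i)\le(\mathsf H,\mathsf V)$, which by the equivalence is the unique continuous positive solution of \eqref{4.7} in that range. Subtracting $(4.4_1)$ from the first equation of \eqref{4.7} and applying Lemma \ref{le3.3} to $W_1=\mathsf H-\mathsf H_i\ge 0$, whose equation carries the strictly positive source $(a_1+\gamma)\mathsf H_i$, forces $\mathsf H_i<\mathsf H$ with $\inf_{\ol Q_T}(\mathsf H-\mathsf H_i)>0$, and likewise $\mathsf V_i<\mathsf V$; hence $(\mathsf H-\mathsf H_i,\mathsf H_i,\mathsf V-\mathsf V_i,\mathsf V_i)\in[C(\ol Q_T)]^4$ is a positive solution of \eqref{4.3}. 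Uniqueness for \eqref{4.3} follows by running this in reverse: summing the first two and the last two equations of \eqref{4.3} shows $\mathsf H_u+\mathsf H_i$ and $\mathsf V_u+\mathsf V_i$ solve $(4.4_1)$ and $(4.4_2)$, hence equal $\mathsf H$ and $\mathsf V$, after which $(\mathsf H_i,\mathsf V_i)$ is a continuous positive solution of \eqref{4.7} below $(\mathsf H,\mathsf V)$ and so coincides with the one just produced.

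For statement (2), with $\lm(\mathscr L)\le 0$: if $(\mathsf H_i,\mathsf V_i)$ solved \eqref{4.7}, were bounded and had a positive lower bound, then from the equations and $\ell_{12}=\mu_1$, $\ell_{21}=\mu_2\mathsf V/\mathsf H$ one obtains the clean identity $\mathscr L_k[(\mathsf H_i,\mathsf V_i)]=\mu_k\frac{\mathsf H_i\mathsf V_i}{\mathsf H}\ge 0$ for $k=1,2$. Since $(\mathsf H_i,\mathsf V_i)\gg 0$ is $T$-periodic, Lemma \ref{le2.2}(1) forces $\lm(\mathscr L)=s(\mathscr L)\ge 0$, which already excludes $\lm(\mathscr L)<0$. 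For the borderline case $\lm(\mathscr L)=0$ I cannot use Theorem \ref{th3.6}(3), because the infection nonlinearity is not strongly subhomogeneous where some $\mu_k$ vanishes; instead I would use that the source $\mathscr L[(\mathsf H_i,\mathsf V_i)]$ is strictly positive on $\{x_0\}\times[0,T]$. Setting $w(\cdot,t)=\Phi(t,0)(\mathsf H_i,\mathsf V_i)(\cdot,0)-(\mathsf H_i,\mathsf V_i)(\cdot,t)\ge 0$ (with $(\mathsf H_i,\mathsf V_i)$ extended $T$-periodically), $w$ solves the cooperative nonlocal system driven by this nontrivial nonnegative source with $w(\cdot,0)=0$, so Lemma \ref{lem:SP} gives $w(\cdot,nT)\gg 0$ for $n>m$, i.e. $\Phi(nT,0)\phi_0\ge(1+\sigma)\phi_0$ for some $\sigma>0$ with $\phi_0=(\mathsf H_i,\mathsf V_i)(\cdot,0)$, whence $r(\Phi(T,0))^n=r(\Phi(nT,0))\ge 1+\sigma$ and $s(\mathscr L)>0$ by Lemma \ref{lem:A1}(3) — a contradiction. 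Hence \eqref{4.7}, and by the same summation as in (1) also \eqref{4.3}, has no bounded solution with a positive lower bound. The two points I expect to be the main obstacles are precisely these: the circular periodic ``initial'' condition means the equivalence step cannot be a plain comparison but needs the sliding/strong-maximum-principle machinery, and the critical case $\lm(\mathscr L)=0$ has to be handled by hand through the identity $\mathscr L_k[(\mathsf H_i,\mathsf V_i)]=\mu_k\mathsf H_i\mathsf V_i/\mathsf H$ and the eventual strong positivity of $\Phi$.
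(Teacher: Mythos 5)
Your overall route to part (1) is essentially the paper's: the paper packages it as Lemma \ref{le4.1} taken at $\sigma=0$, i.e.\ exactly your combination of the truncated cooperative system \eqref{4.7x}, the strict upper solution $(\mathsf H,\mathsf V)$ (your surplus computation $(a_1+\gamma)\mathsf H$, $a_2\mathsf V$ is correct), existence via Theorem \ref{th3.6}(1)/Theorem \ref{th3.5} below the upper solution, the scalar maximum principle (Lemma \ref{le3.3}/Corollary \ref{c3.1}) for $\mathsf H_i<\mathsf H$, $\mathsf V_i<\mathsf V$, and the summation reduction for \eqref{4.3}. For part (2) you genuinely diverge, and usefully so: the paper disposes of both $\lambda(\mathscr L)<0$ and $\lambda(\mathscr L)=0$ at once by setting $\rho_k=\inf_{Q_T}\mu_k\mathsf V_i/\mathsf H$ (resp.\ $\mu_2\mathsf H_i/\mathsf H$) and invoking Lemma \ref{le2.2}(1) to get $\lambda(\mathscr L)\ge\rho>0$; this needs $\rho>0$, i.e.\ tacitly $\mu_k>0$ on all of $\ol Q_T$, whereas the standing assumption only gives $\mu_k(x_0,\cdot)>0$. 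Your split — the Collatz--Wielandt estimate with $\beta=0$ for $\lambda<0$, and for the critical case the identity $\mathscr L_k[(\mathsf H_i,\mathsf V_i)]=\mu_k\mathsf H_i\mathsf V_i/\mathsf H\ge0$ combined with the strictly positive source along $\{x_0\}\times[0,T]$ and the eventual strong positivity of $\Phi$ (Lemma \ref{lem:SP}) to force $r(\Phi(T,0))>1$ — is a correct alternative that works under the weaker positivity hypothesis on $\mu_k$.

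The one genuine gap is in your equivalence step for part (1): from ``every nonnegative $T$-periodic solution of \eqref{4.7x} satisfies $\mathsf H_i\le\mathsf H$, $\mathsf V_i\le\mathsf V$'' you conclude that \eqref{4.7} and \eqref{4.7x} ``share their positive periodic solutions,'' and hence that the solution you construct is the unique continuous positive solution of \eqref{4.7}, as the theorem asserts. But a continuous positive periodic solution of \eqref{4.7} which exceeds $\mathsf H$ (or $\mathsf V$) somewhere is \emph{not} a solution of \eqref{4.7x} there (the truncation is active in one system and not the other), so your sliding bound for \eqref{4.7x} says nothing about it; and on that set \eqref{4.7} is not cooperative, so Theorem \ref{th3.2}/Theorem \ref{th3.4} cannot be quoted directly either. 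Thus your argument only yields uniqueness among solutions of \eqref{4.7} lying in the box $[0,(\mathsf H,\mathsf V)]$ — enough for the \eqref{4.3} statement (any positive solution of \eqref{4.3} lands in the box automatically), but not for the stated global uniqueness for \eqref{4.7}. The paper closes exactly this point in Step 3 of Lemma \ref{le4.1}: one compares an \emph{arbitrary} bounded continuous positive solution of the untruncated problem directly with the constructed solution by the two-sided sliding argument ($\bar s=\sup\{s\le1:\dots\}$ and $\ud\gamma=\inf\{k\ge1:\dots\}$), and the sign conditions needed for the componentwise maximum principle (Lemma \ref{le3.3}) use only that the \emph{constructed} solution stays strictly below $(\mathsf H,\mathsf V)$, so the coupling coefficients $\mu_1(\mathsf H-\widehat{\mathsf H}_i)/\mathsf H$, $\mu_2(\mathsf V-\widehat{\mathsf V}_i)/\mathsf H$ remain nonnegative even if the competitor leaves the box. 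Adding that comparison (or reproducing it for $\sigma=0$) repairs your proof.
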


Before giving the proof of Theorem \ref{t5.1}, we first prove the a general conclusion (the following Lemma \ref{le4.1}), which will be used in the sequel. To that end, let's make some preparations.

Let $\ud g_k^\ep(x,t)$ be the lower control function of $g_k(x,t)$ given in Theorem \ref{t2.1}, and $\phi^\ep_k(x,t)$ be the positive eigenfunction corresponding to $\lm_p(\ud{\mathscr G}_k^\ep)$, with $\|\phi^\ep_k\|_{L^\infty(\oo)}=1$. Then $(\lm_p(\ud{\mathscr G}_k^\ep),\,\phi^\ep_k)$ satisfies
  \[d_k\dd\int_\oo J_k(x,y)\phi^\ep_k(y,t)\dy+\ud g_k^\ep\phi^\ep_k(x,t)-\phi^\ep_{kt}(x,t)=\lm_p(\ud{\mathscr G}_k^\ep)\phi^\ep_k(x,t),\;\;(x,t)\in\ol Q_T,\;\;k=1,2.
 \eqno(4.6_k)\]
As $\lm({\mathscr G}_k)>0$, there exists $0<\ep_0\ll 1$ such that\setcounter{equation}{6}
 \bess
 \lm_p(\ud{\mathscr G}_k^\ep)>0,\;\;\;\forall\,0<\ep\le\ep_0.
 \eess
For the given $|\sigma|\ll 1$, we set
 \bes\begin{cases}
 \ell_{11}^\sigma(x,t)=-[d_1^*(x)+b_1(x,t)+\gamma(x,t)
 +c_1(x,t)(\mathsf{H}(x,t)-\sigma\phi^\ep_1(x,t))],\\[1mm]
 \ell_{12}^\sigma(x,t)=\mu_1(x,t)\dd
 \frac{\mathsf{H}(x,t)+\sigma\phi^\ep_1(x,t)}{\mathsf{H}(x,t)
 -\sigma\phi^\ep_1(x,t)},\\[3mm]
 \ell_{21}^\sigma(x,t)=\mu_2(x,t)\dd\frac{\mathsf{V}(x,t)+\sigma\phi^\ep_2(x,t)}
 {\mathsf{H}(x,t)-\sigma\phi^\ep_1(x,t)},\\[1mm] \ell_{22}^\sigma(x,t)=-[d_2^*(x)+b_2(x,t)+c_2(x,t)(\mathsf{V}(x,t)
 -\sigma\phi^\ep_2(x,t))].&
 \end{cases}\lbl{4.9}\ees
Then $\ell_{11}^\sigma(x,t)<0$, $\ell_{22}^\sigma(x,t)<0$ in $\ol Q_T$ and
 \bes
 L_\sigma(x,t)=(\ell_{kl}^\sigma(x,t))_{2\times 2}
  \lbl{4.10}\ees
is cooperative provided $|\sigma|\ll 1$. Moreover, $L_\sigma(x_0,t)$ is irreducible for all\zzz $0\le t\le T$.

\begin{lem}\lbl{le4.1} Assume that $\lm({\mathscr G}_k)>0$, $k=1,2$, and $\lm({\mathscr L})>0$. Then there is a $0<\sigma_0\ll 1$ such that, when $|\sigma|\le\sigma_0$, the problems
 \bes\left\{\begin{aligned}
&\mathsf{H}_{it}=d_1\!\int_\oo\! J_1(x,y)\mathsf{H}_i(y,t)\dy+\ell_{11}^\sigma(x,t)\mathsf{H}_i
+\mu_1(x,t)\dd\frac{\big(\mathsf{H}+\sigma\phi^\ep_1-\mathsf{H}_i\big)^+}
{\mathsf{H}-\sigma\phi^\ep_1}\mathsf{V}_i,\!&&(x,t)\in\ol Q_T,\\
&\mathsf{V}_{it}=d_2\!\int_\oo\! J_2(x,y)\mathsf{V}_i(y,t)\dy+\ell_{22}^\sigma(x,t)\mathsf{V}_i
+\mu_2(x,t)\dd\frac{\big(\mathsf{V}+\sigma\phi^\ep_2-\mathsf{V}_i\big)^+}
{\mathsf{H}-\sigma\phi^\ep_1}\mathsf{H}_i,\!&&(x,t)\in\ol Q_T,\\
&\mathsf{H}_i(x,0)=\mathsf{H}_i(x,T),\;\;\mathsf{V}_i(x,0)=\mathsf{V}_i(x,T),\;\;
&&x\in\boo
  \end{aligned}\rr.\quad\;\;\lbl{4.11}\ees
and
  \bes\left\{\begin{aligned}
&\mathsf{H}_{it}=d_1\!\int_\oo\! J_1(x,y)\mathsf{H}_i(y,t)\dy+\ell_{11}^\sigma(x,t)\mathsf{H}_i
+\mu_1(x,t)\dd\frac{\mathsf{H}+\sigma\phi^\ep_1-\mathsf{H}_i}
{\mathsf{H}-\sigma\phi^\ep_1}\mathsf{V}_i, \!&&(x,t)\in\ol Q_T,\\
&\mathsf{V}_{it}=d_2\!\int_\oo\! J_2(x,y)\mathsf{V}_i(y,t)\dy+\ell_{22}^\sigma(x,t)\mathsf{V}_i
+\mu_2(x,t)\dd\frac{\mathsf{V}+\sigma\phi^\ep_2-\mathsf{V}_i}
{\mathsf{H}-\sigma\phi^\ep_1} \mathsf{H}_i,\!&&(x,t)\in\ol Q_T,\\
&\mathsf{H}_i(x,0)=\mathsf{H}_i(x,T),\;\;\mathsf{V}_i(x,0)=\mathsf{V}_i(x,T),\;\;
&&x\in\boo
 \end{aligned}\rr.\quad\;\;\lbl{4.12}\ees
have, respectively, unique continuous positive solutions $(\widehat{\mathsf{H}}_i^\sigma,\widehat{\mathsf{V}}_i^\sigma)$ and $(\mathsf{H}_i^\sigma, \mathsf{V}_i^\sigma)$. Moreover,
 \bess
 \widehat{\mathsf{H}}_i^\sigma, \,\mathsf{H}_i^\sigma<\mathsf{H}+\sigma\phi^\ep_1, \;\;\;\widehat{\mathsf{V}}_i^\sigma,\, \mathsf{V}_i^\sigma<\mathsf{V}+\sigma\phi^\ep_2\;\;\;{\rm in}\;\;\ol Q_T.
 \eess
Therefore, $(\widehat{\mathsf{H}}_i^\sigma, \widehat{\mathsf{V}}_i^\sigma)
 =(\mathsf{H}_i^\sigma, \mathsf{V}_i^\sigma)$ in $\ol Q_T$,
and \qq{4.11} and \qq{4.12} are equivalent.\zzz
\end{lem}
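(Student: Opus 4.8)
The plan is to prove the existence and uniqueness for \qq{4.11} via the upper--lower solutions method (Theorem \ref{th3.5}), then to show that its solution automatically stays below $(\mathsf{H}+\sigma\phi^\ep_1,\,\mathsf{V}+\sigma\phi^\ep_2)$, which forces the truncation $(\cdot)^+$ to be inactive and identifies \qq{4.11} with \qq{4.12}. First I would record that the right-hand side of \qq{4.11} is cooperative, subhomogeneous (indeed the nonlinearities are of logistic/incidence type, so Remark \ref{r3.2}(1) applies: $f$ is subhomogeneous in all $u\gg0$ and strictly subhomogeneous below the relevant upper solution), and that the linearization at $(0,0)$ is exactly the operator $\mathscr{L}_\sigma$ built from $L_\sigma(x,t)$ in \qq{4.10}. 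By continuity of the generalized principal eigenvalue in $L$ (Theorem \ref{t2.1}(4)) and the hypothesis $\lm(\mathscr{L})>0$, there is $\sigma_0>0$ so that $\lm(\mathscr{L}_\sigma)>0$ for $|\sigma|\le\sigma_0$ (shrinking $\sigma_0$ further so that $L_\sigma$ is cooperative and $\mathsf{H}-\sigma\phi^\ep_1>0$).

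Next I would construct the upper and lower solutions required by Theorem \ref{th3.5} (or rather, the version of Theorem \ref{th3.6}(1) via Remark \ref{r3.2}(1)). For the \emph{upper solution} I would take $\ol U=(\mathsf{H}+\sigma\phi^\ep_1,\,\mathsf{V}+\sigma\phi^\ep_2)$: using $(4.4_1),(4.4_2)$ for $\mathsf{H},\mathsf{V}$ and $(4.6_1),(4.6_2)$ for $\phi^\ep_1,\phi^\ep_2$ together with $\ud g^\ep_k\le g_k$ and $\lm_p(\ud{\mathscr G}^\ep_k)>0$, a direct substitution shows $\ol U$ is a (strict) supersolution of \qq{4.11}, the point being that on $\ol U$ the factor $(\mathsf{H}+\sigma\phi^\ep_1-\mathsf{H}_i)^+$ vanishes, so the coupling terms drop out and the inequalities reduce to the scalar logistic estimates plus the eigenvalue surplus $\lm_p(\ud{\mathscr G}^\ep_k)\phi^\ep_k>0$. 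For the \emph{lower solution} I would take $\ud U=\rho\psi$ where $\psi\gg0$ is the principal eigenfunction of $\ud{\mathscr L}^\delta_\sigma$ (the lower control operator of $\mathscr{L}_\sigma$ from Theorem \ref{t2.1}) for small $\delta$ with $\lm_p(\ud{\mathscr L}^\delta_\sigma)>0$, and $\rho>0$ chosen so small that $\rho\psi\le\ol U$ and the nonlinear corrections are dominated by $\lm_p(\ud{\mathscr L}^\delta_\sigma)$, exactly as in Step 1 of the proof of Theorem \ref{th3.6}(1). Theorem \ref{th3.5} (via Remark \ref{r3.2}(1)) then yields a unique continuous positive solution $(\widehat{\mathsf{H}}^\sigma_i,\widehat{\mathsf{V}}^\sigma_i)$ of \qq{4.11} with $\rho\psi\le(\widehat{\mathsf{H}}^\sigma_i,\widehat{\mathsf{V}}^\sigma_i)\le\ol U$, and in particular $\widehat{\mathsf{H}}^\sigma_i\le\mathsf{H}+\sigma\phi^\ep_1$, $\widehat{\mathsf{V}}^\sigma_i\le\mathsf{V}+\sigma\phi^\ep_2$.

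To upgrade $\le$ to the strict inequalities $\widehat{\mathsf{H}}^\sigma_i<\mathsf{H}+\sigma\phi^\ep_1$, $\widehat{\mathsf{V}}^\sigma_i<\mathsf{V}+\sigma\phi^\ep_2$, I would set $W=\ol U-(\widehat{\mathsf{H}}^\sigma_i,\widehat{\mathsf{V}}^\sigma_i)\ge0$ and observe that $W$ satisfies a cooperative differential inequality of the form covered by the strong maximum principle Theorem \ref{th3.2} / Corollary \ref{c3.1} (the diagonal coefficients are bounded, the off-diagonal ones nonnegative on $(0,0)\le\cdot\le\ol U$, and the relevant irreducibility holds near $(x_0,t_0)$ since $\mu_k(x_0,t)>0$); since $\ol U$ is a \emph{strict} supersolution, $W\not\equiv0$ componentwise, whence $W_k>0$ in $\bqq$ with $\inf_{Q_T}W_k>0$. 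With $\widehat{\mathsf{H}}^\sigma_i<\mathsf{H}+\sigma\phi^\ep_1$ and $\widehat{\mathsf{V}}^\sigma_i<\mathsf{V}+\sigma\phi^\ep_2$ in hand, the positive parts in \qq{4.11} are identically the unmodified quantities, so $(\widehat{\mathsf{H}}^\sigma_i,\widehat{\mathsf{V}}^\sigma_i)$ solves \qq{4.12}; conversely any positive solution of \qq{4.12} with $\mathsf{H}_i<\mathsf{H}+\sigma\phi^\ep_1$, $\mathsf{V}_i<\mathsf{V}+\sigma\phi^\ep_2$ solves \qq{4.11}, and uniqueness for \qq{4.11} (from Theorem \ref{th3.4}, strict subhomogeneity below $\ol U$) transfers both ways, giving $(\widehat{\mathsf{H}}^\sigma_i,\widehat{\mathsf{V}}^\sigma_i)=(\mathsf{H}^\sigma_i,\mathsf{V}^\sigma_i)$ and equivalence. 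The main obstacle I anticipate is the bookkeeping in verifying that $\ol U$ is genuinely a strict supersolution uniformly for $|\sigma|\le\sigma_0$: one must carefully combine the three identities $(4.4_k)$ and $(4.6_k)$, control the sign of the perturbation terms coming from $\sigma\phi^\ep_k$ and from the difference $\ell^\sigma_{kl}-\ell_{kl}$, and ensure the surplus $\min_k\lm_p(\ud{\mathscr G}^\ep_k)\min_{\bqq}\phi^\ep_k>0$ dominates these $O(\sigma)$ and truncation-induced errors — i.e. choosing $\sigma_0$ small enough in terms of $\ep$ is the delicate quantitative point, everything else being a routine application of the already-established maximum principles and the upper--lower solutions theorem.
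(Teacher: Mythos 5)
Your overall strategy is the paper's: take $\bigl(\mathsf{H}+\sigma\phi^\ep_1,\,\mathsf{V}+\sigma\phi^\ep_2\bigr)$ as a strict upper solution (the truncation killing the coupling terms there), build a small lower solution from a lower-control principal eigenfunction as in Theorem \ref{th3.6}(1), get a continuous positive solution of \qq{4.11} in the order interval, upgrade to strict inequalities, and then untruncate to identify \qq{4.11} with \qq{4.12}. However, there are two genuine gaps. The main one is uniqueness: the lemma asserts uniqueness among \emph{all} continuous positive solutions, whereas your argument only yields uniqueness among solutions trapped below the constructed upper solution. You cannot invoke Theorem \ref{th3.4} for the truncated system, because its hypotheses {\bf(F2)}--{\bf(F3)} are required for all $u\ge 0$, while for \qq{4.11} the mixed partial $\partial_{\mathsf{V}_i}$ of the first right-hand side equals $\mu_1\frac{(\mathsf{H}+\sigma\phi^\ep_1-\mathsf{H}_i)^+}{\mathsf{H}-\sigma\phi^\ep_1}$, which vanishes identically once $\mathsf{H}_i\ge\mathsf{H}+\sigma\phi^\ep_1$; irreducibility and strict subhomogeneity therefore hold only in the region below the threshold, and a competing positive solution is not a priori confined to that region (for \qq{4.12} the situation is worse, since the system is not even cooperative there). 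The paper resolves exactly this point in a separate Step 3: a two-sided sliding argument (first $\bar s=\sup\{s\le 1: s\widetilde U\le\widehat U\}$, then $\ud\gamma=\inf\{k\ge1: k\widetilde U\ge\widehat U\}$), combined with the elementary inequalities satisfied by the positive parts and the maximum principle Lemma \ref{le3.3} applied to the resulting strict differential inequalities \qq{4.14}, which first forces any other continuous positive solution below $\widehat U$ (hence below the threshold, so its truncations are inactive) and then forces equality. Some substitute for this global comparison is needed; "uniqueness below $\ol U$ transfers both ways" does not close it.

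The second, smaller issue is in your verification that $\ol U$ is a strict upper solution. You attribute the strictness to the eigenvalue surplus $\lm_p(\ud{\mathscr G}^\ep_k)\phi^\ep_k$ dominating the $O(\sigma)$ errors, but in the actual computation (combining $(4.4_k)$ with $(4.6_k)$) the surplus enters as $\sigma\lm_p(\ud{\mathscr G}^\ep_k)\phi^\ep_k$, i.e.\ it is itself an $O(\sigma)$ term and, for $\sigma>0$, it sits on the \emph{wrong} side of the inequality. The strict negativity actually comes from the zeroth-order birth terms $-a_1(\mathsf{H}+\sigma\phi^\ep_1)$ and $-a_2(\mathsf{V}+\sigma\phi^\ep_2)$, which are bounded away from zero and dominate all the $O(\sigma)$ contributions; this is precisely the content of the smallness condition \qq{4.13} defining $\sigma_0$. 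With that correction the upper-solution step goes through, and your use of Corollary \ref{c3.1} (strict inequality, no irreducibility needed) to get $\widehat{\mathsf{H}}^\sigma_i<\mathsf{H}+\sigma\phi^\ep_1$, $\widehat{\mathsf{V}}^\sigma_i<\mathsf{V}+\sigma\phi^\ep_2$ is consistent with the paper, provided you first rewrite the difference inequality in the cooperative form required there (splitting the increment of the truncated coupling into a nonnegative multiple of $W_2$ and a bounded multiple of $W_1$).
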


\begin{proof} Here, we only discuss the existence and uniqueness of continuous  positive solutions for problem \qq{4.11}. The existence and uniqueness of positive solutions for problem \qq{4.12} can be addressed in a similar manner, which is simpler than that of \qq{4.11}.

{\it Step 1: The construction of upper solution}. Owing to $a_1, a_2, \mathsf{H}, \mathsf{V}>0$ in $\ol Q_T$, we can choose $0<\sigma_0\ll 1$ such that, when $|\sigma|\le\sigma_0$,
 \bes\left\{\begin{aligned}
&\mathsf{H}+\sigma\phi^\ep_1>0, \;\; \mathsf{V}+\sigma\phi^\ep_2>0&&\mbox{in}\;\;\ol Q_T, \\[.1mm]
&a_1(\mathsf{H}+\sigma\phi^\ep_1)>\sigma\phi^\ep_1\big(\lm_p(\ud{\mathscr G}_1^\ep)+g_1-\ud g_1^\ep+\sigma c_1\phi^\ep_1\big)&&\mbox{in}\;\;\ol Q_T,\\[.1mm]
&a_2(\mathsf{V}+\sigma\phi^\ep_2)>\sigma\phi^\ep_2\big(\lm_p(\ud{\mathscr G}_2^\ep)+g_2-\ud g_2^\ep+\sigma c_2\phi^\ep_2\big)&&\mbox{in}\;\;\ol Q_T.
 \end{aligned}\rr.\lbl{4.13}\ees

Now we verify that
 \bess
 (\ol{\mathsf{H}}_i(x,t), \ol{\mathsf{V}}_i(x,t))=\big(\mathsf{H}(x,t)+\sigma\phi^\ep_1(x,t), \mathsf{V}(x,t)+\sigma\phi^\ep_2(x,t)\big)\eess
is a strict upper solution of \qq{4.11}. By the direct calculations we have that (For clarity and space saving, we omitted $(x,t)$ in this calculation process)
\bess
&&d_1\!\int_\oo\! J_1(x,y)\ol{\mathsf{H}}_i(y,t)\dy+\ell_{11}^\sigma\ol{\mathsf{H}}_i
+\mu_1\dd\frac{\big(\mathsf{H}+\sigma\phi^\ep_1-\ol{\mathsf{H}}_i\big)^+}
{\mathsf{H}-\sigma\phi^\ep_1}
\ol{\mathsf{V}}_i-\ol{\mathsf{H}}_{it}\\[.1mm]
&=&d_1\!\int_\oo\! J_1(x,y)[\mathsf{H}(y,t)+\sigma\phi^\ep_1(y,t)]\dy
+\big[g_1-a_1-c_1(\mathsf{H}-\sigma\phi^\ep_1)\big](\mathsf{H}
+\sigma\phi^\ep_1)-{\mathsf{H}}_{t}-\sigma\phi^\ep_{1t}\\[.1mm]
&=&d_1\!\int_\oo\! J_1(x,y)\mathsf{H}(y,t)\dy+g_1\mathsf{H}-c_1\mathsf{H}^2
-{\mathsf{H}}_{t}+\sigma\kk(d_1\!\int_\oo\! J_1(x,y)\phi^\ep_1(y,t)\dy+\ud g_1^\ep\phi^\ep_1-\phi^\ep_{1t}\rr)\\[.1mm]
&&-a_1(\mathsf{H}+\sigma\phi^\ep_1)+\sigma(g_1-\ud g_1^\ep)\phi^\ep_1+c_1\sigma^2(\phi^\ep_1)^2\\[.1mm]
&=&\sigma\lm_p(\ud{\mathscr G}_1^\ep)\phi^\ep_1
-a_1(\mathsf{H}+\sigma\phi^\ep_1)+\sigma(g_1-\ud g_1^\ep)\phi^\ep_1+c_1\sigma^2(\phi^\ep_1)^2\\[.1mm]
&=&\sigma\phi^\ep_1(\lm_p(\ud{\mathscr G}_1^\ep)+g_1-\ud g_1^\ep+\sigma c_1\phi^\ep_1)-a_1(\mathsf{H}+\sigma\phi^\ep_1)\\[.1mm]
 &<&0\;\;\;\mbox{in}\;\;\ol Q_T
 \eess
by the second inequality of \qq{4.13}. Likewise,
 \bess
 d_2\!\int_\oo\! J_2(x,y)\ol{\mathsf{V}}_i(y,t)\dy
 +\ell_{22}^\sigma\ol{\mathsf{V}}_i+\mu_2\dd\frac{\big(\mathsf{V}
+\sigma\phi^\ep_2-\ol{\mathsf{V}}_i\big)^+}{\mathsf{H}-\sigma\phi^\ep_1}
\ol{\mathsf{H}}_i-\ol{\mathsf{V}}_{it}<0\;\;\;\mbox{in}\;\;\ol Q_T.
\eess

{\it Step 2: The existence of positive solutions}. In view of $\lm({\mathscr L})>0$, we can find a $0<\sigma_0\ll 1$ such that $\lm({\mathscr L}_\sigma)>0$ for all $|\sigma|\le\sigma_0$ by the continuity. Set
 \bess
 f_1(x,t,\mathsf{H}_i,\mathsf{V}_i)&=&d^*_1(x) H_i+\ell_{11}^\sigma(x,t)\mathsf{H}_i
+\mu_1(x,t)\dd\frac{\big(\mathsf{H}(x,t)+\sigma\phi^\ep_1(x,t)-\mathsf{H}_i\big)^+}
{\mathsf{H}(x,t)-\sigma\phi^\ep_1(x,t)}\mathsf{V}_i,\\[.2mm]
f_2(x,t,\mathsf{H}_i,\mathsf{V}_i)&=&d^*_2(x) H_i+\ell_{22}^\sigma(x,t)\mathsf{V}_i
+\mu_2(x,t)\dd\frac{\big(\mathsf{V}(x,t)+\sigma\phi^\ep_2(x,t)-\mathsf{V}_i\big)^+}
{\mathsf{H}(x,t)-\sigma\phi^\ep_1(x,t)}\mathsf{H}_i.
 \eess
It is easy to see that $f(x,t,\mathsf{H}_i,\mathsf{V}_i)=
(f_1(x,t,\mathsf{H}_i,\mathsf{V}_i),f_2(x,t,\mathsf{H}_i,\mathsf{V}_i))$ satisfies {\bf(F1)}--{\bf(F3)} in the region of $\mathsf{H}_i \in (0,\mathsf{H}+\sigma\phi^\ep_1)$, $\mathsf{V}_i \in (0,\mathsf{V}+\sigma\phi^\ep_2)$. Noticing that $L_\sigma(x_0,t)$ is irreducible for all $0\le t\le T$. By repeating the arguments to those in the proofs of Theorem \ref{th3.6}(1) and Theorem \ref{th3.5}, locating between $(0,0)$ and $(\mathsf{H}+\sigma\phi^\ep_1, \mathsf{V}+\sigma\phi^\ep_2)$, the problem \qq{4.11} has a positive solution $(\widehat{\mathsf{H}}_i^\sigma, \widehat{\mathsf{V}}_i^\sigma)\in [C(\ol Q_T)]^2$. Moreover, it follows from Corollary \ref{c3.1} that any positive solution $(\widetilde{\mathsf{H}}_i^\sigma, \widetilde{\mathsf{V}}_i^\sigma)$ of \qq{4.11} locating between $(0,0)$ and $(\mathsf{H}+\sigma\phi^\ep_1, \mathsf{V}+\sigma\phi^\ep_2)$ satisfies
$\widetilde{\mathsf{H}}_i^\sigma<\mathsf{H}+\sigma\phi^\ep_1$ and $ \widetilde{\mathsf{V}}_i^\sigma<\mathsf{V}+\sigma\phi^\ep_2$ in $\ol Q_T$
since $\big(\mathsf{H}+\sigma\phi^\ep_1, \mathsf{V}+\sigma\phi^\ep_2\big)$ is a strict upper solution of \qq{4.11}. Therefore, the positive solution of \qq{4.11} locating between $(0,0)$ and $(\mathsf{H}+\sigma\phi^\ep_1, \mathsf{V}+\sigma\phi^\ep_2)$ is unique (Theorem \ref{th3.4}), which is exactly $\big(\widehat{\mathsf{H}}_i^\sigma, \widehat{\mathsf{V}}_i^\sigma\big)$.
Certainly, $\big(\widehat{\mathsf{H}}_i^\sigma, \widehat{\mathsf{V}}_i^\sigma\big)$ satisfies \qq{4.12}.

{\it Step 3: The uniqueness of the positive solution}. Until now, we already established the uniqueness of the continuous positive solution of \qq{4.11} which does not exceed $\big(\mathsf{H}+\sigma\phi^\ep_1,\,\mathsf{V}+\sigma\phi^\ep_2\big)$.
The global uniqueness can also be obtained by repeating the arguments used in Theorem \ref{th3.4}. For reader's convenience, we provide the details here.

Let $(\widetilde{\mathsf{H}}_i^\sigma, \widetilde{\mathsf{V}}_i^\sigma)$ be another continuous positive solution of \qq{4.11}. We can find a constant $0<s<1$ such that $s(\widetilde{\mathsf{H}}_i^\sigma, \widetilde{\mathsf{V}}_i^\sigma)\le(\widehat{\mathsf{H}}_i^\sigma, \widehat{\mathsf{V}}_i^\sigma)$ in $\ol Q_T$. Set
\[\bar s=\sup\kk\{0<s\le 1: s\big(\widetilde{\mathsf{H}}_i^\sigma, \widetilde{\mathsf{V}}_i^\sigma\big)\le\big(\widehat{\mathsf{H}}_i^\sigma, \widehat{\mathsf{V}}_i^\sigma\big) \;{\rm ~ in  ~ } \ol Q_T\rr\}.\]
Then $\bar s$ is well defined and $0<\bar s\le1$, and $\bar s(\widetilde{\mathsf{H}}_i^\sigma, \widetilde{\mathsf{V}}_i^\sigma)\le\big(\widehat{\mathsf{H}}_i^\sigma, \widehat{\mathsf{V}}_i^\sigma)$ in $\ol Q_T$. We shall prove $\bar s=1$. Assume on the contrary that $\bar s<1$. Then $\mathsf{U}:=\widehat{\mathsf{H}}_i^\sigma-\bar s\widetilde{\mathsf{H}}_i^\sigma\ge 0$ and $\mathsf{Z}:=\widehat{\mathsf{V}}_i^\sigma-\bar s\widetilde{\mathsf{V}}_i^\sigma\ge 0$, as well as $\bar s\widetilde{\mathsf{H}}_i^\sigma\le\widehat{\mathsf{H}}_i^\sigma<\mathsf{H}+\sigma\phi^\ep_1$ and $\bar s\widetilde{\mathsf{V}}_i^\sigma\le\widehat{\mathsf{V}}_i^\sigma<\mathsf{V}+\sigma\phi^\ep_2$ in $\ol Q_T$. It follows that
\bess
&\big(\mathsf{H}+\sigma\phi^\ep_1-\widehat{\mathsf{H}}_i^\sigma\big)^+
=\mathsf{H}+\sigma\phi^\ep_1-\widehat{\mathsf{H}}_i^\sigma,&\\[0.5mm]
&\big(\mathsf{H}+\sigma\phi^\ep_1-\widetilde{\mathsf{H}}_i^\sigma\big)^+
<\mathsf{H}+\sigma\phi^\ep_1-\bar s\widetilde{\mathsf{H}}_i^\sigma,&\\[0.5mm]
& \big(\mathsf{H}+\sigma\phi^\ep_1-\widehat{\mathsf{H}}_i^\sigma\big)^+
-\big(\mathsf{H}+\sigma\phi^\ep_1-\widetilde{\mathsf{H}}_i^\sigma\big)^+
>\bar s\widetilde{\mathsf{H}}_i^\sigma-\widehat{\mathsf{H}}_i^\sigma,&\\[0.5mm]
&\big(\mathsf{V}+\sigma\phi^\ep_2-\widehat{\mathsf{V}}_i^\sigma\big)^+
=\mathsf{V}+\sigma\phi^\ep_2-\widehat{\mathsf{V}}_i^\sigma,&\\[0.5mm] &\big(\mathsf{V}+\sigma\phi^\ep_2-\widetilde{\mathsf{V}}_i^\sigma\big)^+
<\mathsf{V}+\sigma\phi^\ep_2-\bar s\widetilde{\mathsf{V}}_i^\sigma,&\\[0.5mm]
&\big(\mathsf{V}+\sigma\phi^\ep_2-\widehat{\mathsf{V}}_i^\sigma\big)^+
-\big(\mathsf{V}+\sigma\phi^\ep_2-\widetilde{\mathsf{V}}_i^\sigma\big)^+
>\bar s\widetilde{\mathsf{V}}_i^\sigma-\widehat{\mathsf{V}}_i^\sigma&
\eess
in $\ol Q_T$. Denote
\bess
\alpha_1(x,t)=\frac{\mu_1(x,t)\widetilde{\mathsf{V}}_i^\sigma(x,t)}{\mathsf{H}(x,t)
-\sigma\phi^\ep_1(x,t)}-\ell_{11}^\sigma(x,t),\;\;\;
\alpha_2(x,t)=\frac{\mu_2(x,t)\widetilde{\mathsf{H}}_i^\sigma(x,t)}{\mathsf{H}(x,t)
-\sigma\phi^\ep_1(x,t)}-\ell_{22}^\sigma(x,t).
  \eess
After careful calculation, we can show that $\big(\mathsf{U}, \mathsf{Z}\big)$ satisfies, in $\ol Q_T$,
\bes\left\{\!\!\begin{aligned}
&\mathsf{U}_t-d_1\!\int_\oo\!J_1(x,y)\mathsf{U}(y,t)\dy	 +\alpha_1(x,t)\mathsf{U}>\mu_1(x,t)\dd\frac{\mathsf{H}(x,t)+\sigma\phi^\ep_1(x,t)
	-\widehat{\mathsf{H}}_i^\sigma(x,t)}{\mathsf{H}(x,t)
-\sigma\phi^\ep_1(x,t)}\mathsf{Z}\ge 0,\\
&\mathsf{Z}_t-d_2\!\int_\oo\!J_2(x,y)\mathsf{Z}(y,t)\dy +\alpha_2(x,t)\mathsf{Z}>\mu_2(x,t)\dd\frac{\mathsf{V}(x,t)+\sigma\phi^\ep_2(x,t)
	-\widehat{\mathsf{V}}_i^\sigma(x,t)}{\mathsf{H}(x,t)
-\sigma\phi^\ep_1(x,t)}\mathsf{U}\ge 0.
\end{aligned}\right.\qquad\lbl{4.14}\ees
It follows that $\mathsf{U},\mathsf{Z}>0$ in $\ol Q_T$ by the maximum principle (Lemma \ref{le3.3}) as $\mathsf{U},\mathsf{Z}\ge 0$ in $\ol Q_T$. Then there exists $0<\varrho<1-\bar s$ such that $(\mathsf{U}, \mathsf{Z})\ge\varrho(\widetilde{\mathsf{H}}_i^\sigma, \widetilde{\mathsf{V}}_i^\sigma)$, i.e., $(\bar s+\varrho)(\widetilde{\mathsf{H}}_i^\sigma, \widetilde{\mathsf{V}}_i^\sigma)\le(\widehat{\mathsf{H}}_i^\sigma, \widehat{\mathsf{V}}_i^\sigma)$ in $\ol Q_T$. This contradicts the definition of $\bar s$. Hence $\bar s=1$, i.e.,
 \[(\widetilde{\mathsf{H}}_i^\sigma,\, \widetilde{\mathsf{V}}_i^\sigma\big)\le\big(\widehat{\mathsf{H}}_i^\sigma,\, \widehat{\mathsf{V}}_i^\sigma\big)\;\;\;\mbox{in}\;\;\ol Q_T.\]
Certainly, $\widetilde{\mathsf{H}}_i^\sigma<\mathsf{H}+\sigma\phi^\ep_1$, $\widetilde{\mathsf{V}}_i^\sigma<\mathsf{V}+\sigma\phi_2^\ep$, and then
\bess
\big(\mathsf{H}+\sigma\phi^\ep_1-\widetilde{\mathsf{H}}_i^\sigma\big)^+
=\mathsf{H}+\sigma\phi^\ep_1-\widetilde{\mathsf{H}}_i^\sigma,\;\;\;
\big(\mathsf{V}+\sigma\phi^\ep_2-\widetilde{\mathsf{V}}_i^\sigma\big)^+
=\mathsf{V}+\sigma\phi^\ep_2-\widetilde{\mathsf{V}}_i^\sigma.\eess

On the other hand, we can find $\gamma >1$ such that
$\gamma \big(\widetilde{\mathsf{H}}_i^\sigma, \widetilde{\mathsf{V}}_i^\sigma\big)\ge\big(\widehat{\mathsf{H}}_i^\sigma, \widehat{\mathsf{V}}_i^\sigma\big)$ in $\ol Q_T$. Set
\[\ud \gamma=\inf\kk\{k\ge1: k\big(\widetilde{\mathsf{H}}_i^\sigma, \widetilde{\mathsf{V}}_i^\sigma\big)\ge\big(\widehat{\mathsf{H}}_i^\sigma, \widehat{\mathsf{V}}_i^\sigma\big)\;{\rm ~ in  ~ }\, \ol Q_T\rr\}.\]
Then $\ud \gamma$ is well defined, $\ud \gamma\ge1$ and $\ud \gamma\big(\widetilde{\mathsf{H}}_i^\sigma, \widetilde{\mathsf{V}}_i^\sigma\big)\ge\big(\widehat{\mathsf{H}}_i^\sigma, \widehat{\mathsf{V}}_i^\sigma\big)$ in $\ol Q_T$. If $\ud \gamma>1$, then $\mathsf{P}:=\ud \gamma\widetilde{\mathsf{H}}_i^\sigma-\widehat{\mathsf{H}}_i^\sigma\ge 0$ and $\mathsf{Q}:=\ud \gamma\widetilde{\mathsf{V}}_i^\sigma-\widehat{\mathsf{V}}_i^\sigma\ge 0$ in $\ol Q_T$. Similarly to the above, we can verify that $\big(\mathsf{P}, \mathsf{Q}\big)$ satisfies a system of differential inequalities similar to \qq{4.14} and derive $\mathsf{P}, \mathsf{Q}>0$ in $\ol Q_T$ by the maximum principle, and there exists $0<r<\ud \gamma-1$ such that $\big(\mathsf{P}, \mathsf{Q}\big)\ge r\big(\widetilde{\mathsf{H}}_i^\sigma, \widetilde{\mathsf{V}}_i^\sigma\big)$, i.e., $(\ud \gamma-r)\big(\widetilde{\mathsf{H}}_i^\sigma, \widetilde{\mathsf{V}}_i^\sigma\big)\ge\big(\widehat{\mathsf{H}}_i^\sigma, \widehat{\mathsf{V}}_i^\sigma\big)$ in $\ol Q_T$. This contradicts the definition of $\ud \gamma$. Hence $\ud \gamma=1$ and
\[\big(\widetilde{\mathsf{H}}_i^\sigma, \widetilde{\mathsf{V}}_i^\sigma\big)\ge\big(\widehat{\mathsf{H}}_i^\sigma, \widehat{\mathsf{V}}_i^\sigma\big)\;\;\;\mbox{in}\;\;\ol Q_T.\]
The uniqueness is obtained and the proof is complete.\zzz\end{proof}

\begin{proof}[Proof of Theorem \ref{t5.1}] (1) Taking $\sigma=0$ in Lemma \ref{le4.1}, the existence and uniqueness of continuous positive solutions are proved.

(2)\, Assume $\lm({\mathscr L})\le 0$ and prove that \qq{4.7} has no bounded solution with a positive lower bound. Assume on the contrary that $(\mathsf{H}_i, \mathsf{V}_i)$ is a such solution of \qq{4.7}. Set
 \[\rho_1=\inf_{Q_T}\frac{\mu_1(x,t)\mathsf{V}_i(x,t)}{\mathsf{H}(x,t)},\;\;
 \rho_2=\inf_{Q_T}\frac{\mu_2(x,t)\mathsf{H}_i(x,t)}{\mathsf{H}(x,t)}.\]
Then $\rho_1, \rho_2>0$. By the direct calculations we have that, for $\rho=\min\{\rho_1, \rho_2\}$,
 \bess
 \mathsf{H}_{it}&=&d_1\!\int_\oo\! J_1(x,y)\mathsf{H}_i(y,t)\dy+\ell_{11}(x,t)\mathsf{H}_i+\mu_1(x,t)\mathsf{V}_i
 -\frac{\mu_1(x,t)\mathsf{V}_i}{\mathsf{H}(x,t)}\mathsf{H}_i\\
&\le&d_1\!\int_\oo\! J_1(x,y)\mathsf{H}_i(y,t)\dy+\ell_{11}(x,t)\mathsf{H}_i+\ell_{12}(x,t)
\mathsf{V}_i-\rho_1\mathsf{H}_i\\
&\le&d_1\!\int_\oo\! J_1(x,y)\mathsf{H}_i(y,t)\dy+\bar \ell_{11}^\ep(x,t)\mathsf{H}_i-\rho\mathsf{H}_i+\bar \ell_{12}^\ep(x,t)\mathsf{V}_i,\;\;\;(x,t)\in\ol Q_T,\\
\mathsf{V}_{it}&=&d_2\!\int_\oo\! J_2(x,y)\mathsf{V}_i(y,t)\dy+\ell_{22}(x,t)\mathsf{V}_i+\mu_2(x,t)
\frac{\mathsf{V}(x,t)-\mathsf{V}_i}{\mathsf{H}(x,t)}\mathsf{H}_i\\
&\le&d_2\!\int_\oo\! J_2(x,y)\mathsf{V}_i(y,t)\dy+\bar \ell_{22}^\ep(x,t)\mathsf{V}_i-\rho\mathsf{V}_i+\bar \ell_{21}^\ep(x,t)\mathsf{H}_i,\;\;\;(x,t)\in\ol Q_T.
 \eess
It follows that $\lm_p(\ol{\mathscr L}^\ep)\ge\rho$ for all $0<\ep\ll 1$, which implies $\lm({\mathscr L})\ge\rho>0$. This is a contradiction. The proof is complete.
\end{proof}

\subsection{Dynamical properties of \qq{4.2}}\lbl{s3}

In this section we study the stabilities of nonnegative equilibrium solutions.

\begin{theo}\lbl{t4.3} Let $(H_u, H_i, V_u, V_i\big)$ be the unique positive solution of \qq{4.2}. \vspace{-2mm}
\begin{enumerate}[$(1)$]%[leftmargin=6mm]
\item\; If $\lm({\mathscr G}_1)>0$, $\lm({\mathscr G}_2)>0$ and $\lm({\mathscr L})>0$, then, in $C(\ol Q_T)$,
 \bes
 &&\lim_{n\to+\yy}\big(H_u(x,t+nT), H_i(x,t+nT), V_u(x,t+nT), V_i(x,t+nT)\big)\nm\\[0.5mm]
 &=&\big(\mathsf{H}(x,t)-\mathsf{H}_i(x,t),\mathsf{H}_i(x,t), \mathsf{V}(x,t)-\mathsf{V}_i(x,t),\mathsf{V}_i(x,t)\big),\qquad
 \lbl{4.17}\ees
where $\mathsf{H}(x,t)$, $\mathsf{V}(x,t)$ and $\big(\mathsf{H}_i(x,t), \mathsf{V}_i(x,t)\big)$ are the unique positive solutions of $(4.4_1)$, $(4.4_2)$ and \qq{4.7}, respectively, and $\mathsf{H}_i(x,t)<\mathsf{H}(x,t), \mathsf{V}_i(x,t)<\mathsf{V}(x,t)$.

\item\; If $\lm({\mathscr G}_1)>0$, $\lm({\mathscr G}_2)>0$ and $\lm({\mathscr L})\le 0$, then, in $C(\ol Q_T)$,
 \bes
 \lim_{n\to+\yy}\big(H_u(x,t+nT), H_i(x,t+nT), V_u(x,t+nT), V_i(x,t+nT)\big)=\big(\mathsf{H}(x,t), 0, \mathsf{V}(x,t), 0\big).
 \qquad\lbl{4.18}\ees

\item\; If $\lm({\mathscr G}_1)\le 0$, $\lm({\mathscr G}_2)>0$, then, in $C(\ol Q_T)$,
 \bess
\lim_{n\to+\yy}\big(H_u(x,t+nT), H_i(x,t+nT), V_u(x,t+nT), V_i(x,t+nT)\big)=\big(0, 0, \mathsf{V}(x,t), 0\big).\eess

If $\lm({\mathscr G}_1)>0$, $\lm({\mathscr G}_2)\le 0$, then, in $C(\ol Q_T)$,
 \bess
\lim_{n\to+\yy}\big(H_u(x,t+nT), H_i(x,t+nT), V_u(x,t+nT), V_i(x,t+nT)\big)=\big(\mathsf{H}(x,t), 0, 0, 0\big).\eess

If $\lm({\mathscr G}_1) \le 0$, $\lm({\mathscr G}_2) \le0$, then, in $C(\ol Q_T)$,
  \bess
  \lim_{n\to+\yy}\big(H_u(x,t+nT), H_i(x,t+nT), V_u(x,t+nT), V_i(x,t+nT)\big)=(0, 0, 0, 0).\eess
 \end{enumerate}\vspace{-3mm}
  \end{theo}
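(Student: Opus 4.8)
The plan is to reduce everything to the scalar logistic problems $(4.4_1)$, $(4.4_2)$ for the total populations $H=H_u+H_i$, $V=V_u+V_i$ and to the two–component auxiliary cooperative systems \eqref{4.11}--\eqref{4.12}, and then to squeeze the infected components $(H_i,V_i)$ of \eqref{4.2} between solutions of those auxiliary systems. First, as noted just before Lemma \ref{le4.1}, $H$ and $V$ solve the initial value problems associated with $(4.4_1)$ and $(4.4_2)$; Theorem \ref{t3.3} then gives, uniformly in $\ol Q_T$, $H(x,t+nT)\to\mathsf{H}(x,t)$ if $\lm(\mathscr{G}_1)>0$ and $H(x,t+nT)\to 0$ if $\lm(\mathscr{G}_1)\le 0$, and likewise for $V$ and $\lm(\mathscr{G}_2)$. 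This already disposes of the sub‑cases of (3) in which a host dies out: $0\le H_i\le H$ forces $H_i\to 0$ whenever $H\to 0$, $0\le V_i\le V$ forces $V_i\to 0$ whenever $V\to 0$, and when $\lm(\mathscr{G}_2)\le 0$ the source $\mu_1\frac{H_u}{H}V_i\le\mu_1V_i\le\mu_1V\to 0$ of the $H_i$‑equation vanishes, so a scalar comparison (Lemma \ref{le3.3}) yields $H_i\to 0$ as well.

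For part (1), $\lm(\mathscr{G}_1),\lm(\mathscr{G}_2)>0$ and $\lm(\mathscr{L})>0$, so $H\to\mathsf{H}$, $V\to\mathsf{V}$. Fix $0<\sigma\le\sigma_0$ with $\sigma_0$ as in Lemma \ref{le4.1}; by uniform convergence there is $N$ with $\mathsf{H}-\sigma\phi_1^\ep\le H\le\mathsf{H}+\sigma\phi_1^\ep$ and $\mathsf{V}-\sigma\phi_2^\ep\le V\le\mathsf{V}+\sigma\phi_2^\ep$ on $\boo\times[NT,+\yy)$. Writing $H_u=H-H_i\ge 0$, $V_u=V-V_i\ge 0$ and using $H_i\le H$, $V_i\le V$ together with these bounds, a direct estimate shows that on $[NT,+\yy)$ the pair $(H_i,V_i)$ is a subsolution of the $+\sigma$ version of \eqref{4.11} and a supersolution of the $-\sigma$ version. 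Comparing (Theorem \ref{t3c.1}) with the IVP solutions of these auxiliary systems having data $(H_i,V_i)(\cdot,NT)$ at $t=NT$, and letting $n\to+\yy$ (Theorem \ref{th3.6}(1), whose strict upper solution is the one built in Step 1 of Lemma \ref{le4.1}), we obtain
\[\big(\widehat{\mathsf{H}}_i^{-\sigma},\widehat{\mathsf{V}}_i^{-\sigma}\big)\le\liminf_{n\to\yy}(H_i,V_i)(\cdot,\cdot+nT)\le\limsup_{n\to\yy}(H_i,V_i)(\cdot,\cdot+nT)\le\big(\widehat{\mathsf{H}}_i^{\sigma},\widehat{\mathsf{V}}_i^{\sigma}\big).\]
Since the vector field of \eqref{4.11} is monotone increasing in $\sigma$, the family $(\widehat{\mathsf{H}}_i^{\sigma},\widehat{\mathsf{V}}_i^{\sigma})$ is monotone in $\sigma$; as $\lm(\mathscr{L}_\sigma)$ (with $L_\sigma$ from \eqref{4.10}) stays bounded away from $0$ for $|\sigma|\le\sigma_0$ by continuity, it is also uniformly bounded below by a fixed multiple of a positive eigenfunction of a lower control operator, so its two one‑sided limits at $\sigma=0$ are positive solutions of \eqref{4.7}, hence both equal $(\mathsf{H}_i,\mathsf{V}_i)$ by Theorem \ref{th3.4}. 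Letting $\sigma\to 0^+$ gives $(H_i,V_i)(\cdot,\cdot+nT)\to(\mathsf{H}_i,\mathsf{V}_i)$ uniformly, and then $H_u=H-H_i\to\mathsf{H}-\mathsf{H}_i$, $V_u=V-V_i\to\mathsf{V}-\mathsf{V}_i$, which is \eqref{4.17}.

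For part (2), $\lm(\mathscr{G}_1),\lm(\mathscr{G}_2)>0$ (so $H\to\mathsf{H}$, $V\to\mathsf{V}$) and $\lm(\mathscr{L})\le 0$; the same estimate makes $(H_i,V_i)$ a subsolution, on $[NT,+\yy)$, of the $+\sigma$ version of \eqref{4.11}, whose nonlinearity is cooperative and strongly subhomogeneous on the relevant region and which admits $(\mathsf{H}+\sigma\phi_1^\ep,\mathsf{V}+\sigma\phi_2^\ep)$ as a strict upper solution. If $\lm(\mathscr{L})<0$ we pick $\sigma$ small so that $\lm(\mathscr{L}_\sigma)<0$ and Theorem \ref{th3.6}(2) drives the $+\sigma$ IVP (hence $(H_i,V_i)$) to zero. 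If $\lm(\mathscr{L})=0$ then $\lm(\mathscr{L}_\sigma)\ge 0$ for every $\sigma>0$: when $\lm(\mathscr{L}_\sigma)=0$, Theorem \ref{th3.6}(3) again sends the $+\sigma$ IVP to zero; when $\lm(\mathscr{L}_\sigma)>0$ the $+\sigma$ IVP converges to $(\widehat{\mathsf{H}}_i^{\sigma},\widehat{\mathsf{V}}_i^{\sigma})$, and letting $\sigma\to 0^+$ the monotone family $(\widehat{\mathsf{H}}_i^{\sigma},\widehat{\mathsf{V}}_i^{\sigma})$ decreases to a nonnegative solution of \eqref{4.7}, which by Theorem \ref{th3.3} is either $\gg 0$ or $\equiv 0$; the former is excluded because \eqref{4.7} (equivalently \eqref{4.7x}) has no positive solution when $\lm(\mathscr{L})=0$, the critical‑case non‑existence of Theorem \ref{th3.6}(3) being applicable since the reduced $f$ is strongly subhomogeneous on $0\ll u\le\rho_0(\mathsf{H},\mathsf{V})$, cf. Remark \ref{r3.2}. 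In all cases $(H_i,V_i)(\cdot,\cdot+nT)\to 0$, which with $H\to\mathsf{H}$, $V\to\mathsf{V}$ gives \eqref{4.18}. The only remaining sub‑case of (3) is $\lm(\mathscr{G}_1)\le 0<\lm(\mathscr{G}_2)$, where $H\to 0$ (so $H_u,H_i\to 0$), $V\to\mathsf{V}$, and one must still show $V_i\to 0$; here one argues by contradiction: if $V_i\not\to 0$, the infection flux $\mu_1\frac{H_u}{H}V_i$ into the birds cannot be negligible relative to $H_i$ unless $\frac{H_u}{H}\to 0$, but the birth source $a_1H+\gamma H_i\ge a_1H$ in the $H_u$‑equation, together with the bounded decay that $H_u$ experiences, forces $H_u\gtrsim cH$ along those times, a contradiction; hence $V_i\to 0$ and the limit is $(0,0,\mathsf{V},0)$.

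The recurring difficulty, as throughout the paper, is the lack of compactness of the solution maps, so the limits of the monotone iteration/approximation sequences are only one‑sidedly continuous; this is handled in part (1) by squeezing between the two families $\widehat{\mathsf{H}}_i^{\pm\sigma}$, $\widehat{\mathsf{V}}_i^{\pm\sigma}$ and sending $\sigma\to 0$, and invoking Dini's theorem where monotonicity is available. The genuinely delicate points are: (i) the critical case $\lm(\mathscr{L})=0$ in part (2), where a surviving positive state has to be ruled out by coupling the monotone $\sigma$‑family with the critical‑case non‑existence theorem; and (ii) the extinction of the infected vectors in part (3) when the bird host dies out, where the singular factor $\tfrac1H$ degenerates the naive linearization and a contradiction argument based on the birth term must replace a direct comparison.
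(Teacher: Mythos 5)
For parts (1) and (2) your route is essentially the paper's: squeeze $(H_i,V_i)$ between the $\pm\sigma$ auxiliary systems \eqref{4.11}--\eqref{4.12}, let the corresponding initial value problems converge to their unique positive periodic solutions $(\widehat{\mathsf H}_i^{\pm\sigma},\widehat{\mathsf V}_i^{\pm\sigma})$, and send $\sigma\to0$. One step you gloss over is, however, genuinely needed and is a real step in the paper. When you identify the one-sided limits $\lim_{\sigma\to0^\pm}(\widehat{\mathsf H}_i^{\sigma},\widehat{\mathsf V}_i^{\sigma})$ with $(\mathsf H_i,\mathsf V_i)$ ``by Theorem \ref{th3.4}'', note that \eqref{4.7} is cooperative and strictly subhomogeneous only on the order interval $0\le(\mathsf H_i,\mathsf V_i)\le(\mathsf H,\mathsf V)$, while the limit from above is a priori only $\le(\mathsf H,\mathsf V)$, possibly touching it, and is only semicontinuous (an element of $\mathbb{Z}^2$). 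Before the uniqueness theorem can be invoked one must rule out contact with $(\mathsf H,\mathsf V)$: the paper does this by applying Lemma \ref{le3.3} to $\mathsf Z=\mathsf H-\mathsf H_i^*$ (and to the $\mathsf V$-component) to get uniform gaps $\mathsf H_i^*\le\mathsf H-\kappa_1$, $\mathsf V_i^*\le\mathsf V-\kappa_2$, and only then applies the uniqueness result on that region. Without this separation argument your appeal to Theorem \ref{th3.4} is not justified. (Your uniform lower bound ``by a fixed multiple of an eigenfunction'' is also unnecessary: monotonicity in $\sigma$ bounds the whole family below by the solution at $\sigma=-\sigma_0$.) Your treatment of part (2), including the splitting $\lm(\mathscr L)<0$ versus $\lm(\mathscr L)=0$ and the exclusion of a positive limit in the critical case, matches the paper's two cases.

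The genuine gap is the sub-case $\lm(\mathscr G_1)\le 0<\lm(\mathscr G_2)$ of part (3) (which, incidentally, the paper's own proof does not treat at all). Your contradiction argument does not work: the assertion that the birth source $a_1H$ forces $H_u\gtrsim cH$ has no justification, because the incidence is frequency dependent -- the per-susceptible infection pressure is of order $\mu_1 V_i/H$, which blows up as $H\to0$ whenever $V_i$ stays bounded away from zero, so nothing prevents $H_u/H\to0$ and no contradiction is reached. In fact, in the spatially homogeneous reduction (symmetric kernels, Neumann-type $d_k^*$, constant coefficients with $a_1<b_1$ and $a_2>b_2$) one checks that the infected fraction $H_i/(H_u+H_i)$ is driven to $1$ and $V_i$ converges to a positive constant, so $V_i\to0$ cannot follow from the total-population limits $H\to0$, $V\to\mathsf V$ by any soft comparison of the kind you sketch; this step needs either additional hypotheses or a completely different argument. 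A smaller point in the same part: for the sub-case $\lm(\mathscr G_2)\le0$ you cite Lemma \ref{le3.3} to get $H_i\to0$, but that lemma is a maximum principle for time-periodic functions, not an asymptotic statement; what is needed is a comparison of $H_i$ with a scalar linear problem whose growth bound is negative (e.g.\ using the term $-c_1\mathsf H\,H_i$ when $\lm(\mathscr G_1)>0$, or simply $0\le H_i\le H\to0$ when $\lm(\mathscr G_1)\le0$), together with the vanishing forcing $\mu_1V_i\to0$.
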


\begin{proof} Throughout this proof, the matrix $L_\sigma(x,t)$ is defined by \qq{4.10}. Let $(H_u, H_i, V_u, V_i)$ be the unique solution of \qq{4.2}. Then $H=H_u+H_i$ and $V=V_u+V_i$ satisfy
 \bess\left\{\!\begin{aligned}
 &H_t=d_1\!\dd\int_\oo\! J_1(x,y)H(y,t)\dy+g_1(x,t)H-c_1(x,t)H^2,&&x\in\boo,\; t>0,\\
&H(x,0)=H_u(x,0)+H_i(x,0)>0,&&x\in\boo
	\end{aligned}\rr.\eess
and
 \bess\left\{\begin{aligned}
&V_t=d_2\!\dd\int_\oo\! J_2(x,y)V(y,t)\dy+g_2(x,t)V-c_2(x,t)V^2,&&x\in\oo,\; t>0,\\
&V(x,0)=V_u(x,0)+V_i(x,0)>0,&&x\in\oo,
	\end{aligned}\rr.\eess
respectively. As $\lm({\mathscr G}_1)>0$, $\lm({\mathscr G}_2)>0$, we have that, by Theorem \ref{t3.3},
 \bes
 \lim_{n\to+\yy}H(x,t+nT)=\mathsf{H}(x,t),\;\;\;\lim_{n\to+\yy}V(x,t+nT)=\mathsf{V}(x,t)
 \;\;\;{\rm uniformly\,\, in}\;\;\ol Q_T.\lbl{4.19}\ees

Similar to the above, let $\phi^\ep_k(x,t)>0$ with $\|\phi^\ep_k\|_{L^\infty(Q_T)}=1$ be the unique solution of $(4.6_k)$. For any given $0<\sigma\le\sigma_0$, there exists $N_\sigma\gg 1$ such that
 \bes\left\{\begin{aligned}
&0<{\mathsf H}(x,t)-\sigma\phi^\ep_1(x,t)\le H(x,t+nT)\le {\mathsf H}(x,t)+\sigma\phi^\ep_1(x,t),&&(x,t)\in Q_T,\; n\ge N_\sigma,\\
&0<{\mathsf V}(x,t)-\sigma\phi^\ep_2(x,t)\le V(x,t+nT)\le {\mathsf V}(x,t)+\sigma\phi^\ep_2(x,t),&&(x,t)\in Q_T,\; n\ge N_\sigma.
  \end{aligned}\rr.\lbl{4.20}\ees
Since ${\mathsf H}, {\mathsf V}, \phi^\ep_1, \phi^\ep_2$ and all coefficient functions  are $T$-periodic in time $t$. Making use of $H_u=H-H_i$, $V_u=V-V_i$ and \qq{4.20}, we see that $\big(H_i, V_i\big)$ satisfies
 \bes\left\{\!\begin{aligned}
&H_{it}\le d_1\!\dd\int_\oo\! J_1(x,y)H_i(y,t)\dy-d^*_1(x) H_i+ \mu_1(x,t)\dd\frac{(\mathsf{H}+\sigma\phi^\ep_1-H_i)^+}{\mathsf{H}
 -\sigma\phi^\ep_1}V_i\\
 &\hspace{16mm}-\big[b_1(x,t)+c_1(x,t)(\mathsf{H}-\sigma\phi^\ep_1)\big]H_i\\
&\hspace{4.5mm}=d_1\!\dd\int_\oo\! J_1(x,y)H_i(y,t)\dy+\ell_{11}^\sigma(x,t) H_i+ \mu_1(x,t)\dd\frac{(\mathsf{H}+\sigma\phi^\ep_1-H_i\big)^+}{\mathsf{H}
 -\sigma\phi^\ep_1}V_i,
\!&&x\in\ol\oo,\; t\ge n T,\\
&V_{it}\le d_2\!\dd\int_\oo\! J_2(x,y)V_i(y,t)\dy-d^*_2(x)V_i+ \mu_2(x,t)\dd\frac{(\mathsf{V}+\sigma\phi^\ep_2-V_i)^+}{\mathsf{H} -\sigma\phi^\ep_1}H_i\\
 &\hspace{16mm}-\big[b_2(x,t)+c_2(x,t)(\mathsf{V}-\sigma\phi^\ep_2)\big]V_i\\
&\hspace{4.5mm}=d_2\!\dd\int_\oo\! J_2(x,y)V_i(y,t)\dy+\ell_{22}^\sigma(x,t)V_i+ \mu_2(x,t)\dd\frac{(\mathsf{V}+\sigma\phi^\ep_2-V_i)^+}{\mathsf{H} -\sigma\phi^\ep_1}H_i,
\!&&x\in\ol\oo,\; t\ge n T,\\
&H_i\le H\le \mathsf{H}(x,0)+\sigma\phi^\ep_1(x,0),\;\;
V_i\le V\le \mathsf{V}(x,0)+\sigma\phi^\ep_2(x,0),&&x\in\boo,\,t=n T
 \end{aligned}\rr.\lbl{4.21}\ees
for all $n\ge N_\sigma$, where $\ell_{11}^\sigma(x,t)$ and $\ell_{22}^\sigma(x,t)$ are defined by \qq{4.9}.

(1)\, Assume that $\lm({\mathscr G}_1)>0$, $\lm({\mathscr G}_2)>0$ and $\lm({\mathscr L})>0$.

{\it Step 1}. Since $\lm({\mathscr L})>0$, there exists a $0<\sigma_0\ll 1$ such that $\lm({\mathscr L}_\sigma)>0$ for all $|\sigma|\le\sigma_0$ by the continuity. Let $(U_\sigma,Z_\sigma)$ be the solution of
\bes\left\{\!\begin{aligned}
 &U_{\sigma,t}=d_1\!\dd\int_\oo\!J_1(x,y)U_\sigma(y,t)\dy
 +\ell_{11}^\sigma(x,t)U_\sigma+ \mu_1(x,t)\dd\frac{(\mathsf{H}
 +\sigma\phi^\ep_1-U_\sigma\big)^+}{\mathsf{H} -\sigma\phi^\ep_1}Z_\sigma,\!&&x\in\oo,\; t>0,\\[1mm]
 &Z_{\sigma,t}=d_2\!\dd\int_\oo\! J_2(x,y)Z_\sigma(y,t)\dy+\ell_{22}^\sigma(x,t)Z_\sigma+ \mu_2(x,t)\dd\frac{(\mathsf{V}+\sigma\phi^\ep_2-Z_\sigma\big)^+}{\mathsf{H} -\sigma\phi^\ep_1}U_\sigma,\!&&x\in\oo,\; t>0,\\[1mm]
&(U_\sigma(x,0), Z_\sigma(x,0))=(U_0(x,0), Z_0(x,0)),\!&&x\in\Omega.
  \end{aligned}\rr.\quad\;\;\;\;\lbl{4.22}\ees
with $U_0(x,t)=\mathsf{H}(x,t)+\sigma\phi^\ep_1(x,t)$ and $Z_0(x,t)=\mathsf{V}(x,t)+\sigma\phi^\ep_2(x,t)$.  By the comparison principle,
 \bes
 (H_i(x,t+n T),V_i(x,t+n T))\le(U_\sigma(x,t), Z_\sigma(x,t))\;\;\;\mbox{in}\;\; \boo\times[0,+\yy),\;\;\forall\,n\ge N_\sigma.
 \lbl{4.24}\ees

On the other hand, we have shown that $\big(\mathsf{H}+\sigma\phi^\ep_1, \mathsf{V}+\sigma\phi^\ep_2\big)$ is a strict upper solution of \qq{4.11} provided $|\sigma|\ll 1$ in Step 1 of the proof of Lemma \ref{le4.1}. In view of $\lm({\mathscr L}_\sigma)>0$, by repeating the arguments to those in Theorem \ref{th3.6}(1), we have
 \bes
\lim_{n\to+\yy}(U_\sigma(x,t+nT), Z_\sigma(x,t+nT))=(\widehat{\mathsf{H}}_i^\sigma(x,t),
\widehat{\mathsf{V}}_i^\sigma(x,t))=(\mathsf{H}_i^\sigma(x,t), \mathsf{V}_i^\sigma(x,t))
 \lbl{4.23}\ees
uniformly in $\ol Q_T$.
This combines with \qq{4.24} gives
\bes
 \limsup_{n\to+\yy}\big(H_i(x,t+nT), V_i(x,t+nT)\big)\le (\mathsf{H}_i^\sigma(x,t), \mathsf{V}_i^\sigma(x,t))\;\;\;\mbox{uniformly in}\;\;\ol Q_T.
  \lbl{4.25}\ees
Noticing that the continuous positive solution $(\mathsf{H}_i^\sigma, \mathsf{V}_i^\sigma)$ of \qq{4.12} exists and is unique. From the expression \qq{4.9} of $\ell_{kl}^\sigma$, it is easy to see that $\ell_{kl}^\sigma$ is increasing in $\sigma$. Then $(\mathsf{H}_i^\sigma, \mathsf{V}_i^\sigma)\ge(\mathsf{H}_i, \mathsf{V}_i)$ and $(\mathsf{H}_i^\sigma, \mathsf{V}_i^\sigma)$ is increasing in $\sigma$ by the comparison principle and the uniqueness. Therefore,  $\lim_{\sigma\to0^+}(\mathsf{H}_i^\sigma, \mathsf{V}_i^\sigma)=(\mathsf{H}^*_i, \mathsf{V}^*_i)$ exists, $(\mathsf{H}^*_i, \mathsf{V}^*_i)\le (\mathsf{H}, \mathsf{V})$  and $(\mathsf{H}^*_i, \mathsf{V}^*_i)$ is a positive solution of the integral system of \qq{4.7}. It is easy to verify that $(\mathsf{H}^*_i, \mathsf{V}^*_i)$ belongs to $\mathbb{Z}^2$, and is also  a positive solution of \qq{4.7} and \qq{4.7x}.

Now we prove that $\mathsf{H}^*_i<\mathsf{H}$ and $\mathsf{V}^*_i<\mathsf{V}$. Let
$\mathsf{Z}=\mathsf{H}-\mathsf{H}^*_i$. Then $\mathsf{Z}\ge 0$ and satisfies
 \bess\begin{aligned}
 &\mathsf{Z}_t=d_1\!\int_\oo\!J_1(x,y)\mathsf{Z}(y,t)\dy
 +\dd\kk(a_1-b_1-d_1^*-c_1\mathsf{H}-\mu_1\frac{\mathsf{V}^*_i}{\mathsf{H}}\rr)
 \mathsf{Z}+a_1(x,t)\mathsf{H}^*_i\\[1mm]
 &\hspace{4mm}>d_1\!\int_\oo\!J_1(x,y)\mathsf{Z}(y,t)\dy
 \dd\kk(a_1-b_1-d_1^*-c_1\mathsf{H}-\mu_1\frac{\mathsf{V}^*_i}{\mathsf{H}}\rr)
 \mathsf{Z},\;\;(x,t)\in\ol Q_T,\\[1mm]
 &\mathsf{Z}(x,0)=\mathsf{Z}(x,T),\;\;\;\;x\in\boo
 \end{aligned}\eess
by $(4.4_1)$ and the first equation of \qq{4.7}. According to Lemma \ref{le3.3}, we conclude $\mathsf{Z}>0$ in $\ol Q_T$ and $\inf_{Q_T}\mathsf{Z}=:\kappa_1>0$, i.e.,
  \[\mathsf{H}^*_i\le\mathsf{H}-\kappa_1\;\;\;\mbox{in }\;\ol Q_T.\]
Similarly,
 \[\mathsf{V}^*_i\le\mathsf{V}-\kappa_2\;\;\;\mbox{in }\;\ol Q_T\]
for some $\kappa_2>0$.

It is easy to see that, in the range of $(0,0)\le(\mathsf{H}_i, \mathsf{V}_i)\le(\mathsf{H}-\kappa_1, \mathsf{V}-\kappa_2)$, the system \qq{4.7} is cooperative and the corresponding conditions {\bf(F2)} and {\bf(F3)} hold.  Notice that $(\mathsf{H}^*_i, \mathsf{V}^*_i) \in \mathbb{Z}^2$ is a positive solution of \qq{4.7}. By Theorem \ref{th3.6} and Remark \ref{r3.2}  (or by repeating the arguments to those in Theorem \ref{th3.4}), we have $(\mathsf{H}^*_i, \mathsf{V}^*_i)=(\mathsf{H}_i, \mathsf{V}_i)$. Hence, $\lim_{\sigma\to0^+}({\mathsf{H}_i^\sigma}, {\mathsf{V}_i^\sigma})=(\mathsf{H}_i, \mathsf{V}_i)$ uniformly in $\ol Q_T$. This combined with \qq{4.25} yields
\bes
 \limsup_{n\to+\yy}(H_i(x,t+nT), V_i(x,t+nT))\le(\mathsf{H}_i(x,t), \mathsf{V}_i(x,t)) \;\;\;{\rm uniformly\; in}\;\;\ol Q_T.
\lbl{4.26}\ees

{\it Step 2}. Noticing that $\mathsf{H}_i<\mathsf{H}$ and $\mathsf{V}_i<\mathsf{V}$ in $\ol Q_T$. There exists $0<\tau_0<\sigma_0$ such that
 \bes
 \mathsf{H}_i<\mathsf{H}-2\tau\phi^\ep_1,\; \;\; \mathsf{V}_i<\mathsf{V}-2\tau\phi^\ep_2\;\;\;\mbox{in}\;\;\ol Q_T
   \lbl{4.27}\ees
for all $0<\tau<\tau_0$. Thanks to \qq{4.26} and \qq{4.27}, there exists $N^*_\tau\gg1$ such that
  \bes
  \begin{cases}
  	H_i(x,t+n T)<\mathsf{H}_i(x,t)+\tau \phi^\ep_1(x,t)
  	<\mathsf{H}(x,t)-\tau\phi^\ep_1(x,t), \;\;(x,t)\in\ol Q_T,\;n\ge N^*_\tau,\\
  	V_i(x,t+n T)<\mathsf{V}_i(x,t)+\tau \phi^\ep_2(x,t)
  	<\mathsf{V}(x,t)-\tau\phi^\ep_2(x,t), \;\;(x,t)\in\ol Q_T,\;n\ge N^*_\tau.
  	  \end{cases}\label{equ:HV:tau}
  	\ees

{\it Step 3}. For such $\tau$ determined in Step 2. Thanks to \qq{4.19}, there exists $\hat N_\tau\gg 1$ such that
 \bess\left\{\begin{aligned}
&0<\mathsf{H}(x,t)-\tau\phi^\ep_1(x,t)\le H(x,t+nT)\le \mathsf{H}(x,t)+\tau\phi^\ep_1(x,t),\!&&(x,t)\in\ol Q_T,\; n\ge\hat N_\tau,\\
&0<\mathsf{V}(x,t)-\tau\phi^\ep_2(x,t)\le V(x,t+nT)\le \mathsf{V}(x,t)+\tau\phi^\ep_2(x,t),\!&&(x,t)\in\ol Q_T,\; n\ge\hat N_\tau.
  \end{aligned}\rr.\eess
Noticing that $H_u=H-H_i$ and $V_u=V-V_i$. Let $N_\tau=N_\tau^*+\hat N_\tau$. Take advantage of \qq{equ:HV:tau}, similar to the derivation of \qq{4.21} it can be deduced that $\big(H_i, V_i\big)$ satisfies
 \bess\kk\{\begin{aligned}
&H_{it}\ge d_1\!\dd\int_\oo\! J_1(x,y)H_i(y,t)\dy-d^*_1(x) H_i+ \mu_1(x,t)\dd\frac{(\mathsf{H}-\tau\phi^\ep_1-H_i)^+}{\mathsf{H}+\tau\phi^\ep_1}V_i\\
 &\hspace{16mm}-\big[b_1(x,t)+c_1(x,t)(\mathsf{H}
 +\tau\phi^\ep_1)\big]H_i\\
&\hspace{4.5mm}=d_1\!\dd\int_\oo\! J_1(x,y)H_i(y,t)\dy+\ell_{11}^{-\tau}(x,t) H_i+ \mu_1(x,t)\dd\frac{(\mathsf{H}-\tau\phi^\ep_1-H_i)^+}{\mathsf{H}+\tau\phi^\ep_1}V_i,\!&&x\in\boo,\, t\ge nT,\\
&V_{it}\ge d_2\!\dd\int_\oo\! J_2(x,y)V_i(y,t)\dy-d^*_2(x)V_i+ \mu_2(x,t)\dd\frac{(\mathsf{V}-\tau\phi^\ep_2-V_i)^+}{\mathsf{H}+\tau\phi^\ep_1}H_i\\
&\hspace{16mm}-\big[b_2(x,t)+c_2(x,t)(\mathsf{V}+\tau\phi^\ep_2)\big]V_i\\
&\hspace{4.5mm}=d_2\!\dd\int_\oo\! J_2(x,y)V_i(y,t)\dy+\ell_{22}^{-\tau}(x,t)V_i+ \mu_2(x,t)\dd\frac{(\mathsf{V}-\tau\phi^\ep_2-V_i)^+}{\mathsf{H}+\tau\phi^\ep_1}H_i, \!&&x\in\boo,\, t\ge nT
 \end{aligned}\rr.\eess
for all $n\ge N_\tau$. Let $(U_{-\tau},Z_{-\tau})$ be the solution of \qq{4.22} with $\sigma=-\tau$ and $(U_0(x,t),Z_0(x,t))=\big(H_i(x,t+N_\tau T), V_i(x,t+N_\tau T)\big)$. Then we have
 \bes
 (H_i(x,t+N_\tau T),V_i(x,t+N_\tau T))\ge(U_{-\tau}(x,t), Z_{-\tau}(x,t))\;\;\;\mbox{in}\;\; \boo\times[0,+\yy)
 \lbl{4.31}\ees
by the comparison principle. As we have known that $\lm({\mathscr L}_\sigma)>0$, the corresponding limit \qq{4.23} holds by repeating the arguments to those in Theorem \ref{th3.6}(1). This fact combines with \qq{4.31} yields
 \bes
 \liminf_{n\to+\yy}\big(H_i(x,t+nT), V_i(x,t+nT)\big)\ge (\widehat{\mathsf{H}}_i^{-\tau}(x,t),
 \widehat{\mathsf{V}}_i^{-\tau}(x,t))=(\mathsf{H}_i^{-\tau}(x,t), \mathsf{V}_i^{-\tau}(x,t))
 \lbl{4.32}\ees
uniformly in $\ol Q_T$. Similar to the arguments in Step 1, we have   $\lim_{\tau\to 0^+}(\mathsf{H}_i^{-\tau}, \mathsf{V}_i^{-\tau})=(\mathsf{H}_i, \mathsf{V}_i\big)$ uniformly in $\ol Q_T$. Hence, by using of \qq{4.32},
  \bess
 \liminf_{n\to+\yy}\big(H_i(x,t+nT), V_i(x,t+nT)\big)\ge \big(\mathsf{H}_i(x,t), \mathsf{V}_i(x,t)\big) \;\;\;{\rm uniformly\; in}\; \;\ol Q_T.
\eess
This, together with \qq{4.26}, yields that
 \[\lim_{n\to+\yy}\big(H_i(x,t+nT),V_i(x,t+nT)\big)=
 \big(\mathsf{H}_i(x,t),\mathsf{V}_i(x,t)\big) \;\;\;{\rm uniformly\; in}\; \;\ol Q_T.\]
Using \qq{4.19} we conclude that \qq{4.17} holds. The proof of conclusion (1) is complete.

(2)\, Assume that $\lm({\mathscr G}_1)>0$, $\lm({\mathscr G}_2)>0$ and $\lm({\mathscr L})\le 0$. In Step 2 of the proof of Lemma \ref{le4.1} we have shown that $\big(\mathsf{H}+\sigma\phi^\ep_1, \mathsf{V}+\sigma\phi^\ep_2\big)$ is a strict upper solution of \qq{4.11}.

{\it Case 1: $\lm({\mathscr L})<0$}. Then there exists a $0<\sigma_0\ll 1$ such that $\lm({\mathscr L}_\sigma)<0$ for all $0\le\sigma\le\sigma_0$ by the continuity. Noticing that \qq{4.20} and \qq{4.21} always hold. Let $(U_\sigma(x,t),Z_\sigma(x,t)\big)$ be the unique positive solution of \qq{4.22} with $(U_0(x,t), Z_0(x,t))=\big(\mathsf{H}(x,t)+\sigma\phi^\ep_1(x,t), \mathsf{V}(x,t)+\sigma\phi^\ep_2(x,t)\big)$. Then \qq{4.24} holds. Moreover,
 \[\lim_{t\to+\yy}(U_\sigma(x,t),Z_\sigma(x,t)\big)=(0,0) \;\;\text{ uniformly in } \; \overline{\Omega}\]
by Theorem \ref{th3.6}(2) and Remark \ref{r3.2}. This combines with \qq{4.24} derives
 \bes
 \lim_{t\to+\yy}(H_i(x,t), V_i(x,t))=(0, 0) \;\;\;{\rm uniformly\; in}\; \;\boo.
 \lbl{4.33}\ees
Therefore, \qq{4.18} holds by \qq{4.19}.

{\it Case 2: $\lm({\mathscr L})=0$}. Then there exists a $0<\sigma_0\ll 1$ such that $\lm({\mathscr L}_\sigma)>0$ for all $0<\sigma\le\sigma_0$ by the continuity. Then \qq{4.12} has a unique continuous positive solution $\big(\mathsf{H}_i^\sigma, \mathsf{V}_i^\sigma\big)$ and $\big(\mathsf{H}_i^\sigma, \mathsf{V}_i^\sigma\big)< \big(\mathsf{H}+\sigma\phi^\ep_1, \mathsf{V}+\sigma\phi^\ep_2\big)$. Similar to the proof of \qq{4.26} we can show that
\bes
 \limsup_{n\to+\yy}\big(H_i(x,t+nT), V_i(x,t+nT)\big)\le \big(\mathsf{H}_i^\sigma(x,t), \mathsf{V}_i^\sigma(x,t)\big) \;\;\;{\rm uniformly\; in}\; \;\ol Q_T.
\lbl{4.34}\ees

Since $\ell_{kl}^\sigma$ is increasing in $\sigma$, it is easy to see that $\big(\mathsf{H}_i^{\sigma'}, \mathsf{V}_i^{\sigma'}\big)$ is an strict upper solution of \qq{4.12} when $\sigma'>\sigma$. So, $\big(\mathsf{H}_i^\sigma, \mathsf{V}_i^\sigma\big)$ is strict increasing in $\sigma>0$, and then the limit
 \bes\lim_{\sigma\to 0^+}\big(\mathsf{H}_i^\sigma, \mathsf{V}_i^\sigma\big)=\big(\mathsf{H}_i^*, \mathsf{V}_i^*\big)\lbl{4.35}\ees
exists and is a nonnegative solution of \qq{4.7}. Similar to the discussions in Step 1 of conclusion (1), we can show that $\mathsf{H}^*_i\le\mathsf{H}-\kappa$ and  $\mathsf{V}^*_i\le\mathsf{V}-\kappa$ in $\ol Q_T$ for some $\kappa>0$. According to Theorem \ref{th3.2}, either $(\mathsf{H}_i^*,\,\mathsf{V}_i^*)\equiv 0$, or $\mathsf{H}_i^*, \mathsf{V}_i^*>0$ in $\ol Q_T$ and $\inf_{Q_T}\mathsf{H}_i^*>0$ and $\inf_{Q_T}\mathsf{V}_i^*>0$. Notice that \qq{4.7} is a cooperation system and the corresponding conditions {\bf(F2)} and {\bf(F3)} hold in the range of $(0,0)\le(\mathsf{H}_i, \mathsf{V}_i)\le(\mathsf{H}-\kappa, \mathsf{V}-\kappa)$.
By Theorem \ref{th3.6} and Remark \ref{r3.2} again, there holds $(\mathsf{H}_i^*,\,\mathsf{V}_i^*)\equiv 0$ in $\ol Q_T$. This combines with \qq{4.34} and \qq{4.35} implies that \qq{4.33} holds. The proof is complete.
 \end{proof}

\section{Discussions}

In this paper, we studied systems of nonlocal operators with cooperative and irreducible structure. By constructing the monotonic upper and lower control systems, we obtained the generalized principal eigenvalue and give two applications. Through these two examples, we saw that this generalized principal eigenvalue plays the same role as the usual principal eigenvalue.

For the given systems of nonlocal operators with cooperative and irreducible structure, certain additional conditions should be imposed to guarantee the existence of the principal eigenvalue. However, for the system \qq{4.7}, all we know is  that $\mathsf{H}(x,t)$ and $\mathsf{V}(x,t)$ are positive solutions of $(4.4_k)$ for $k=1,2$, respectively. Without a deeper understanding of their properties, imposing additional assumptions on $\mathsf{H}(x,t)$ and $\mathsf{V}(x,t)$ to ensure the existence of the principal eigenvalue seems rather contrived. The method presented in this paper can circumvent this issue.

Certainly, our method can be applied to address the nonlocal dispersal version of the models studied in \cite{LZ21, WZhang24}. Furthermore, the dynamics of the May-Nowak model \cite{Nowak-B96, Bo97} and the Capasso-Maddalena model \cite{CM81} with nonlocal dispersal in time-periodic environments can also be analyzed using the generalized principal eigenvalue through the approximation method presented in this paper, which has been discussed in \cite{WZhang25} for the autonomous case.

We finish this section with a brief discussion on the propagation dynamics of cooperative nonlocal dispersal systems. The authors of \cite{BSS2019CPAA} established the theory of traveling wave solutions and spreading speeds of cooperative nonlocal dispersal systems in time-periodic and space-periodic environments. In their work, they assume that the system has a monostable structure, meaning that the system admits a time-periodic and space-periodic positive solution that is globally asymptotically stable. In this paper, Theorem \ref{th3.6} demonstrates that this assumption is valid when the zero solution is unstable. Additionally, to achieve the linear determinacy of the spreading speeds, they also assume the existence of the principal eigenvalue. This assumption can be removed using the perturbation method we present in Theorem \ref{t2.1}. The details of this are omitted here.

%\appendix
%\begin{appendix}
 %\motto{All's well that ends well}
 \def\theequation{\Alph{section}.\arabic{equation}}

\end{document}